\theoremstyle{definition}
\newtheorem{lem}{Lemma}[section]
\newtheorem{prop}[lem]{Proposition}
\newtheorem{thm}[lem]{Theorem}
\newtheorem{cor}[lem]{Corollary}
\newtheorem{exa}[lem]{Example}
\newtheorem{cnj}[lem]{Conjecture}
\newtheorem*{Def}{Definition}
\newcommand{\bc}[2]{\genfrac{(}{)}{0pt}{}{#1}{#2}}
\newcommand{\Z}{\mathbb{Z}}
\begin{document}

\title{An Extremal Problem for the Neighborhood Lights Out Game}

\author{Lauren Keough\thanks{Department of Mathematics, Grand Valley State University, 
Allendale, Michigan
49401-6495, keoulaur@gvsu.edu,
\url{https://sites.google.com/view/laurenkeough/}}
\and Darren B. Parker\thanks{Department of Mathematics, Grand Valley State University, 
Allendale, Michigan
49401-6495, parkerda@gvsu.edu,
\url{http://faculty.gvsu.edu/parkerda}}}

\date{
\today \\
\small MR Subject Classifications: 05C57, 05C35, 05C50\\
\small Keywords: Lights Out, light-switching game, winnability, extremal graph theory, linear algebra}

\maketitle

\begin{abstract}
Neighborhood Lights Out is a game played on graphs. Begin with a graph and a vertex labeling of the graph from the set $\{0,1,2,\dots, \ell-1\}$ for $\ell \in \mathbb{N}$. The game is played by toggling vertices: when a vertex is toggled, that vertex and each of its neighbors has its label increased by $1$ (modulo $\ell$). The game is won when every vertex has label 0.  For any $n \ge 2$ it is clear that one cannot win the game on $K_n$ unless the initial labeling assigns all vertices the same label. Given that $K_n$ has the maximum number of edges of any simple graph on $n$ vertices it is natural to ask how many edges can be in a graph so that the Neighborhood Lights Out game is winnable regardless of the initial labeling. We find the maximum number of edges a winnable $n$-vertex graph can have when at least one of $n$ and $\ell$ is odd. When $n$ and $\ell$ are both even we find the maximum size in two additional cases. The proofs of our results require us to introduce a new version of the Lights Out game that can be played given any square matrix.
\end{abstract}

\section{Introduction}

The Lights Out game was originally created by Tiger Electronics.  It has since been reimagined as a light-switching game on graphs.  Several variations of the game have been developed (see, for example \cite{Craft/Miller/Pritikin:09} and \cite{paper14}), but all have some important elements in common.  In each game, we begin with a graph $G$ and a labeling of $V(G)$ with labels in $\mathbb{Z}_\ell$ for some $\ell \ge 2$. The vertices can be toggled so as to change the labels of some of the vertices. Finally, there is some desired labeling (usually the labeling with all labels being 0, called the \emph{zero labeling} and denoted by $\textbf{0}$) that marks the end of the game.

The most common variation of the Lights Out game is what we call the \emph{neighborhood Lights Out game}.  This is a generalization of Sutner's $\sigma^+$-game (see \cite{Sutner:90}).  Each time we toggle some $v \in V(G)$, the label of each vertex in the closed neighborhood of $v$, $N[v]$, is increased by 1 modulo $\ell$.  The game is won when the zero labeling is achieved.  This game was developed independently in \cite{paper11} and \cite{Arangala:12} and has been studied in \cite{Arangala/MacDonald/Wilson:14}, \cite{Arangala/MacDonald:14}, \cite{Hope:10}, \cite{paper13}, and \cite{Hope:19}.  The original Lights Out game is the neighborhood Lights Out game on a grid graph with $\ell=2$ and has been studied in \cite{Amin/Slater:92}, \cite{Goldwasser/Klostermeyer:07}, and \cite{Sutner:90}.

It is possible for a Lights Out game to be impossible to win.  Much of the work on Lights Out games has centered on the conditions under which winning the game is possible.  Winnability depends on the version of the game that is played, the graph on which the game is played, and on $\ell$.  In our paper, we work with the neighborhood Lights Out game with labels in $\mathbb{Z}_\ell$ for arbitrary $\ell \ge 2$.

For each $n \ge 2$ there exist many labelings of $K_n$ for which the neighborhood Lights Out game is impossible to win.  In fact, any initial labeling in which not every vertex has the same label cannot be won.  It is also true that $K_n$ has the most edges of any simple graph on $n$ vertices.  It then makes sense to ask, given $n,\ell \ge 2$, what is the maximum size of a simple graph on $n$ vertices with labels in $\mathbb{Z}_\ell$ for which the neighborhood Lights Out game can be won for every possible initial labeling?  We call this maximum size $\max(n,\ell)$.  In addition, we seek to classify the winnable graphs of maximum size among all graphs on $n$ vertices with labels from $\mathbb{Z}_{\ell}$, which we call \emph{$(n,\ell)$-extremal graphs}.

 It appears that the complements of $(n,\ell)$-extremal graphs have the property that every non-pendant vertex is adjacent to a pendant vertex. As in \cite{Graf:14} we write $H\astrosun K_1$ for the graph in which, for each vertex $v$ of $H$, we add a new vertex adjacent to only $v$.  We call such graphs \emph{pendant graphs}.  In the case that a pendant graph is a tree or a forest, we use the terms \emph{pendant tree} or \emph{pendant forest}, respectively.



Our main results are in Section~\ref{Extremal Graphs}, where we determine partial results on the classification of $(n,\ell)$-extremal graphs.  In the case of $n$ odd, we show that all $(n,\ell)$-extremal graphs are complements of near perfect matchings. We also classify all $(n,\ell)$-extremal graphs when $n$ is even and $\ell$ is odd. In the remaining case we have the following conjecture.

\begin{cnj}\label{conjecture}
For $n,\ell$ even then
    \[\max(n,\ell) = \binom{n}{2} - \left(\frac{n}{2} + k\right)\]
where $k$ is the smallest nonnegative integer such that $\gcd(n-2k-1,\ell)=1$. In each case the $(n,\ell)$-extremal graphs are precisely the complements of pendant graphs of order $n$ that have size $\binom{n}{2} - \left(\frac{n}{2} + k\right)$.
\end{cnj}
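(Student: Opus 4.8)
The plan is to translate winnability into linear algebra: the neighborhood Lights Out game on $G$ over $\mathbb{Z}_\ell$ is winnable from every initial labeling if and only if the closed neighborhood matrix $N[G] = A(G)+I$ acts invertibly on $\mathbb{Z}_\ell^{\,n}$, i.e. $\gcd(\det N[G],\ell)=1$. Since $N[G] = J - A(\overline{G})$, setting $H = \overline{G}$ turns the question into: among all $n$-vertex graphs $H$ with $\gcd(\det(J-A(H)),\ell)=1$, minimize $|E(H)|$; then $\max(n,\ell) = \binom{n}{2} - \min|E(H)|$ and the $(n,\ell)$-extremal graphs are the complements of the minimizers.

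For the constructive bound $\max(n,\ell) \ge \binom{n}{2} - \big(\tfrac{n}{2}+k\big)$, I would take $H = H_0 \astrosun K_1$ for any graph $H_0$ on $\tfrac n2$ vertices with exactly $k$ edges (possible since $k \le \tfrac n2 - 1 \le \binom{n/2}{2}$ for $n \ge 4$). Writing $J - A(H)$ in $2\times 2$ block form along the spine/pendant bipartition, a short sequence of block row and column operations reduces $\det(J-A(H))$ to $\det\big(2J_{n/2} - I - J_{n/2}A(H_0)\big)$; since $J_{n/2}\big(2I - A(H_0)\big) = \mathbf{1}\mathbf{c}^{\top}$ with $\mathbf{c} = \big(2 - \deg_{H_0}(v_i)\big)_i$, the matrix determinant lemma gives $\det(J-A(H)) = \pm\big(n - 2k - 1\big)$, a unit modulo $\ell$ by the choice of $k$. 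The answer depends only on $|E(H_0)|$, so \emph{every} pendant graph of order $n$ with $\tfrac n2 + k$ edges has winnable complement; this supplies the ``$\supseteq$'' half of the claimed classification.

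For the lower bound, first observe that if $|E(H)| < \tfrac n2$ the edges of $H$ cover at most $n-2$ vertices, so $H$ has two isolated vertices, the two corresponding rows of $J-A(H)$ both equal $\mathbf{1}^{\top}$, and $\det(J-A(H))=0$; hence $|E(H)| \ge \tfrac n2$, with equality forcing $H$ to be a perfect matching (the only alternative, $P_3 \sqcup \tfrac{n-4}{2}K_2 \sqcup K_1$, has two leaves with a common neighbour, hence a repeated row). A perfect matching gives $\det = \pm(n-1)$, so when $\gcd(n-1,\ell)>1$ no graph attains $|E(H)| = \tfrac n2$. The core of the argument is to iterate this: for $0 \le j < k$, show every $H$ with $|E(H)| = \tfrac n2 + j$ has $\gcd(\det(J-A(H)),\ell)>1$, and at $|E(H)| = \tfrac n2 + k$ only pendant graphs have winnable complement. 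The structural input is a degree count: $\sum_v \deg_H(v) = n + 2j$ together with ``at most one isolated vertex'' (forced as above) bounds the number of vertices of degree $\ge 2$ by $2j+1 \le 2k+1$, so $H$ is a bounded ``core'' of at most $2k+1$ high-degree vertices together with leaves hung on them, a matching, and at most one isolated vertex. On this family one evaluates $\det(J-A(H))$ using the identity $\det(J-A(H)) = (-1)^n\big(\det A(K_1 \vee H) + \det A(H)\big)$ — for which the matrix-valued Lights Out game introduced in the paper is the natural device to handle deletions — together with the fact that $\det A$ of a graph is a signed count of its spanning linear subgraphs (disjoint unions of edges and cycles) and so can be computed by peeling leaves.

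The main obstacle is exactly this last step: showing that across \emph{all} core configurations the determinant shares a prime factor with $\ell$ unless $H$ is a pendant graph with $\tfrac n2 + k$ edges. Two features make it delicate. First, the prime $2$ behaves differently from the odd primes dividing $\ell$ (it is cheap to avoid, excluding only very sparse cores), so one must rule out any conspiracy in which a non-pendant configuration dodges every prime of $\ell$ with fewer than $\tfrac n2 + k$ edges — equivalently, that $\pm(n-2k-1)$ is genuinely optimal for a graph of that size. Second, the classification requires a careful boundary analysis of the degree count — cores containing triangles or longer induced paths, and the placement of the single permitted isolated vertex — to certify that no non-pendant graph of size exactly $\tfrac n2 + k$ is extremal. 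If this boundary analysis can be organized by an induction on $n$ (removing a leaf of $G$, equivalently a near-universal vertex of $H$), the argument should parallel the already-proven cases of $n$ odd and of $\ell$ odd.
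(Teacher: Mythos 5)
The statement you are addressing is labelled a \emph{conjecture} in the paper, and the paper does not prove it in full: the authors establish only the lower bound $\max(n,\ell)\ge\binom{n}{2}-\left(\frac{n}{2}+k\right)$ (via the pendant graph $kP_4\cup\left(\frac{n}{2}-2k\right)P_2$ and their lemma that a pendant graph of order $n$ and size $\frac{n}{2}+k$ has $N$-AW complement iff $\gcd(n-2k-1,\ell)=1$), together with equality and the classification for $0\le k\le 3$ and for graphs of minimum degree at least $n-3$. Your constructive half is sound and essentially parallel to theirs --- you take $H=H_0\astrosun K_1$ with $|E(H_0)|=k$ and compute $\det(J-A(H))=\pm(n-2k-1)$ by block row reduction, where the paper instead reaches the same conclusion by a toggling argument (row-reducing $N(\overline{G})$ to an adjacency-type matrix and counting toggles on the pendant graph). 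Your preliminary observations (two isolated vertices in $H$ force a repeated row; the degree sum $n+2j$ plus ``at most one isolated vertex'' bounds the number of $2^+$-vertices by $2j+1$) also match the paper's Corollary on twins and its Lemma bounding $\Delta(\overline{G})$ by $t+1$.

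The genuine gap is the one you flag yourself. The entire upper bound and classification --- that every $H$ with $\frac{n}{2}\le|E(H)|<\frac{n}{2}+k$ has $\gcd(\det(J-A(H)),\ell)>1$, and that at size exactly $\frac{n}{2}+k$ only pendant graphs succeed --- is stated as a program rather than proved. The degree count reduces the problem to a ``core'' of at most $2k+1$ high-degree vertices, but that family is still unbounded as $k$ grows, and the proposed tools (a determinant identity relating $\det(J-A(H))$ to $\det A(K_1\vee H)$ and $\det A(H)$, leaf-peeling, an induction on $n$) are not carried out on it; you give no argument that a non-pendant configuration cannot evade every prime of $\ell$ at smaller size. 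This is exactly where the paper also stops: it settles $k\le 3$ by exhaustive degree-sequence analysis (using its replacement corollary to discard non-pendant cores with the same order, winnability, and toggling number but larger size) and explicitly notes that already at $k=4$ there are $23$ degree sequences and a new method is needed. In short, your proposal reproves the known lower bound by a legitimate alternative (determinant-based) route, but it does not close, and does not claim to close, the open part of the conjecture.
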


By proving that the complements of pendant graphs can be won no matter the initial labeling, we conclude that $\max(n,\ell)$ is at least the quantity given in Conjecture \ref{conjecture}. We also prove equality for $0\leq k \leq 3$ and in the family of all graphs that have minimum degree at least $n-3$.

To determine winnability, we depend heavily on linear algebra methods similar to those in \cite{Anderson/Feil:98}, \cite{Arangala/MacDonald/Wilson:14}, \cite{Hope:10}, and \cite{paper11}.  We discuss these methods in Section \ref{Linear Algebra}. Our techniques differ in that we introduce how to play Lights Out given any square matrix. These tools allow us to determine winnability in some dense graphs by considering winnability in a modified Lights Out game in their sparse complements, which we discuss further in Section \ref{Winnability}.

Throughout the paper, we assume the vertex labels of any labeling are from $\mathbb{Z}_\ell$ for some $\ell \in \mathbb{N}$, and so any reference to $\ell$ refers to this set of labels.


\section{Linear Algebra} \label{Linear Algebra}
Winnability in the Lights Out game on graphs can be studied by determining a strategy for toggling the vertices.  But it can also be determined using linear algebra.  We proceed as in \cite{Anderson/Feil:98} and \cite{paper11}.  

Let $G$ be a graph with $V(G)=\{v_1,v_2, \ldots , v_n\}$, and let $N(G)=[N_{ij}]$ be the neighborhood matrix of $G$ (where $N_{ij}=1$ if and only if $v_i$ is adjacent to $v_j$ or $i=j$ and $N_{ij}=0$ otherwise).   Define the vectors $\textbf{b}, \textbf{x} \in \mathbb{Z}_\ell^n$ so that $\textbf{b}[i]$ is the initial label of $v_i$, and $\textbf{x}[i]$ is the number of times $v_i$ is toggled.  As explained in \cite[Lemma~3.1]{paper11}, $\textbf{x}$ represents a winning set of toggles if and only if it satisfies the matrix equation $N(G)\textbf{x}=-\textbf{b}$. In this linear algebra perspective we typically think of the initial labeling of the graph as a vector (as in $\textbf{b}$ above). When we determine winnability by playing the game we will typically think of the initial labeling as a function.


As described above, the neighborhood Lights Out game can be played by knowing the neighborhood matrix and an initial labeling. However, we can also play a Lights Out game using any matrix. Let $M = [m_{ij}] \in M_n(\mathbb{Z}_\ell)$ (the set of $n\times n$ matrices with entries in $\mathbb{Z}_{\ell}$), and define the \emph{vertex set of $M$} as a set of $n$ elements $V(M) = \{ v_1,v_2,\ldots,v_n \}$.  We then define the $M$-Lights Out game as follows.  We label the elements of $V(M)$ with a vector $\textbf{b} \in \mathbb{Z}_\ell^n$, where each $v_i$ has label $\textbf{b}[i]$.  We play the game by toggling elements of $V(M)$.  Each time $v_j$ is toggled, we add $m_{ij}$ to the label of $v_i$ for all $1 \leq i \leq n$.  As with the ordinary Lights Out game, we win the game when we achieve the  labeling ${\bf 0}$.


\begin{exa}
Let $G$ be a graph.  For $M=N(G)$, we get the neighborhood Lights Out game.  If we let $M$ be the adjacency matrix $A(G)$, we get an analogue of the $\sigma$-game from Sutner (see \cite{Sutner:90}), where toggling a vertex $v$ increases the label of each vertex in the open neighborhood $N_G(v)$ of $v$ by 1 modulo $\ell$ and leaves the label of $v$ unchanged. We call this the \emph{adjacency Lights Out game}.
\end{exa}

Throughout, we shorten the names of the neighborhood Lights Out game and the adajacency Lights Out game to the $N(G)$-Lights Out game and the $A(G)$-Lights Out game, respectively. We shorten even further to the $N$-Lights Out game and the $A$-Lights Out game when the graph is clear. 
Though the adjacency matrix and the neighborhood matrix are both symmetric there is no requirement that $M$ be symmetric in the $M$-Lights Out game. Now we introduce some terminology related to whether a given $M$-Lights Out game can be won.

\begin{Def}
Let $M$ and $V=V(M)$ be as above. We call a labeling $\pi$ \emph{$M$-winnable} if the $M$-Lights Out game can be won with initial labeling $\pi$.  We say that $V$ is \emph{$M$-always winnable}, or \emph{$M$-AW} for short, if all labelings of $V(M)$ are $M$-winnable.
\end{Def}

In the case that $V$ is the vertex set of a graph $G$, we refer to $G$ as being $M$-AW, with the understanding that $V(M)=V(G)$. In these cases, $M$ is often given by the neighborhood matrix or adjacency matrix.  The following summarizes the connection between whether a given $M$-Lights Out game can be won and the linear algebraic properties of $M$.  The proof follows from basic linear algebra.

\begin{lem} \label{matrixLO}  Let $M \in M_n(\mathbb{Z}_\ell)$ and $V(M)=\{v_1,v_2,\dots,v_n\}$.  
	\begin{enumerate}
	\item \label{matrixLOsys} Let $\pi$ be a labeling of $V(M)$, and define $\textbf{b}[i] = \pi(v_i)$. Then $\pi$ is $M$-winnable with the toggles given by $\textbf{x}$ if and only if $M\textbf{x} = -\textbf{b}$.
	\item  \label{matrixLOinv} The vertex set $V(M)$ is $M$-AW if and only if $M$ is invertible over $\Z_{\ell}$. 
	\end{enumerate} \end{lem}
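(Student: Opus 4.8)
The plan is to translate the dynamics of the $M$-Lights Out game directly into linear algebra over $\mathbb{Z}_\ell$ and then invoke standard facts about linear maps on a finite module; this is exactly the ``basic linear algebra'' alluded to in the statement.

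For part~\ref{matrixLOsys}, I would first observe that the effect of a sequence of toggles depends only on how many times each $v_j$ is toggled, not on the order in which the toggles are performed: toggling $v_j$ once adds the fixed vector $(m_{1j},\dots,m_{nj})^T$ (the $j$th column of $M$) to the current labeling, and addition in $\mathbb{Z}_\ell^n$ is commutative and associative. Moreover, since all arithmetic is modulo $\ell$, toggling $v_j$ a total of $x_j$ times has the same effect as toggling it $x_j \bmod \ell$ times, so without loss of generality a toggling strategy is encoded by a vector $\textbf{x}\in\mathbb{Z}_\ell^n$. Starting from the labeling with $\textbf{b}[i]=\pi(v_i)$, after applying the toggles recorded in $\textbf{x}$ the label of $v_i$ becomes $\textbf{b}[i] + \sum_{j=1}^n m_{ij}x_j = \textbf{b}[i] + (M\textbf{x})[i]$, so the resulting labeling is the vector $\textbf{b} + M\textbf{x}$. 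The game is won precisely when this equals $\textbf{0}$, i.e. when $M\textbf{x} = -\textbf{b}$, which is the claim.

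For part~\ref{matrixLOinv}, I would use part~\ref{matrixLOsys}: $V(M)$ is $M$-AW if and only if for every labeling $\pi$, equivalently for every $\textbf{b}\in\mathbb{Z}_\ell^n$, the system $M\textbf{x} = -\textbf{b}$ has a solution $\textbf{x}\in\mathbb{Z}_\ell^n$. As $\textbf{b}$ ranges over $\mathbb{Z}_\ell^n$ so does $-\textbf{b}$, so this says exactly that the $\mathbb{Z}_\ell$-linear map $\textbf{x}\mapsto M\textbf{x}$ on $\mathbb{Z}_\ell^n$ is surjective. Since $\mathbb{Z}_\ell^n$ is a finite set, a map from it to itself is surjective if and only if it is bijective; a bijective linear map is an isomorphism of $\mathbb{Z}_\ell$-modules, whose inverse is again $\mathbb{Z}_\ell$-linear and hence represented by a matrix in $M_n(\mathbb{Z}_\ell)$, so $M$ is invertible over $\mathbb{Z}_\ell$. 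Conversely, if $M^{-1}\in M_n(\mathbb{Z}_\ell)$ exists then $\textbf{x} = -M^{-1}\textbf{b}$ solves the system for every $\textbf{b}$, so $V(M)$ is $M$-AW.

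There is no deep obstacle here; the only point requiring a little care is that $\mathbb{Z}_\ell$ need not be a field, so ``invertible'' must be read as having a two-sided inverse in $M_n(\mathbb{Z}_\ell)$ (equivalently, $\det M$ a unit in $\mathbb{Z}_\ell$), and one cannot simply quote rank arguments valid over fields. The finiteness of $\mathbb{Z}_\ell^n$, which promotes surjectivity to bijectivity, is what lets us sidestep this, and it is the one step I would state explicitly rather than leave to the reader.
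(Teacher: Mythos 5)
Your proof is correct and is essentially the argument the paper has in mind: the paper dispatches this lemma with the remark that it ``follows from basic linear algebra,'' relying on exactly the translation you spell out (a toggling strategy is a vector $\textbf{x}$, the resulting labeling is $\textbf{b}+M\textbf{x}$, and always-winnability is surjectivity, hence invertibility, of $\textbf{x}\mapsto M\textbf{x}$ over $\mathbb{Z}_\ell$). Your explicit note that surjectivity upgrades to bijectivity by finiteness, rather than by a rank argument that would require a field, is a sound way to justify the step the paper leaves implicit.
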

	
In this paper, we focus on whether or not a graph $G$ is $N(G)$-AW, so we seek to determine whether or not a given neighborhood matrix is invertible. One straightforward way to apply linear algebra techniques is when two rows or columns of a matrix are identical.

\begin{Def}
Let $M \in M_n(\mathbb{Z}_\ell)$, and let $v,w \in V(M)$.  We call $v$ and $w$ \emph{$M$-twins} if the rows or columns of $M$ represented by $v$ and $w$ are identical.
\end{Def}

In graph theory, two vertices $v$ and $w$ are twins provided that have the same open neighborhood excluding $v$ and $w$. 
Twin vertices  that are adjacent in a graph  result in identical rows and columns in the neighborhood matrix and thus are $N$-twins. Twin vertices that are not adjacent result in identical rows in the adjacency matrix and thus are $A$-twins. The following is immediate from considering the invertibility of the matrix.

\begin{cor}\label{twins}
Let $M \in M_n(\mathbb{Z}_\ell)$, and suppose there exist $M$-twins in $V(M)$.  Then $V(M)$ is not $M$-AW.
\end{cor}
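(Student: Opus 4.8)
The plan is to show that the existence of $M$-twins forces $M$ to be non-invertible over $\mathbb{Z}_\ell$, and then invoke Lemma~\ref{matrixLO}\eqref{matrixLOinv} to conclude that $V(M)$ is not $M$-AW. So the whole argument reduces to a routine linear algebra observation: if two rows (or two columns) of $M$ are identical, then $M$ is not invertible over $\mathbb{Z}_\ell$.

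First I would handle the case that $v$ and $w$ correspond to identical rows, say rows $i$ and $j$ with $i \ne j$. Suppose for contradiction that $M$ is invertible over $\mathbb{Z}_\ell$. Then the system $M\textbf{x} = \textbf{c}$ has a solution for every $\textbf{c} \in \mathbb{Z}_\ell^n$; in particular, pick $\textbf{c}$ to be the standard basis vector $\textbf{e}_i$ (with a $1$ in position $i$ and $0$ elsewhere). Any solution $\textbf{x}$ would satisfy $(M\textbf{x})[i] = 1$ and $(M\textbf{x})[j] = 0$, but since rows $i$ and $j$ of $M$ are equal we have $(M\textbf{x})[i] = (M\textbf{x})[j]$, forcing $1 = 0$ in $\mathbb{Z}_\ell$, which contradicts $\ell \ge 2$. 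Hence $M$ is not invertible. For the column case, one can either run the dual argument (an invertible matrix has linearly independent columns over $\mathbb{Z}_\ell$, so two equal columns is an immediate contradiction), or simply note that $M$ is invertible over $\mathbb{Z}_\ell$ if and only if $M^{\mathsf{T}}$ is, and identical columns of $M$ are identical rows of $M^{\mathsf{T}}$, reducing to the first case. Then Lemma~\ref{matrixLO}\eqref{matrixLOinv} gives that $V(M)$ is not $M$-AW.

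There is really no main obstacle here, which matches the excerpt's remark that this is ``immediate from considering the invertibility of the matrix.'' The only point requiring a sliver of care is that we are working over the ring $\mathbb{Z}_\ell$ rather than a field, so I would phrase the non-invertibility argument without dividing — the contradiction $1 = 0$ in $\mathbb{Z}_\ell$ from the solvability of $M\textbf{x} = \textbf{e}_i$ works over any $\mathbb{Z}_\ell$ with $\ell \ge 2$, and does not rely on $\ell$ being prime or on determinants being units. Alternatively, if one prefers a determinant-based phrasing, note that a matrix with two equal rows has determinant $0$ over any commutative ring, and $0$ is not a unit in $\mathbb{Z}_\ell$, so $M$ fails the standard criterion for invertibility over $\mathbb{Z}_\ell$ (namely that $\det M$ be a unit); but the direct solvability argument above is cleaner and avoids having to cite the determinant characterization of invertibility over $\mathbb{Z}_\ell$.

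To summarize the order of steps: (1) assume $M$-twins exist, splitting into the ``equal rows'' and ``equal columns'' subcases; (2) in the equal-rows subcase, suppose $M$ is invertible, use surjectivity of $\textbf{x} \mapsto M\textbf{x}$ to solve $M\textbf{x} = \textbf{e}_i$, and derive $1 = 0$ in $\mathbb{Z}_\ell$, a contradiction; (3) reduce the equal-columns subcase to the equal-rows subcase via transposition; (4) conclude $M$ is not invertible over $\mathbb{Z}_\ell$, and apply Lemma~\ref{matrixLO}\eqref{matrixLOinv} to deduce $V(M)$ is not $M$-AW.
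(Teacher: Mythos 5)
Your proposal is correct and takes essentially the same route as the paper, which gives no written proof beyond the remark that the claim is immediate from the invertibility criterion in Lemma~\ref{matrixLO}(\ref{matrixLOinv}); you are simply supplying the standard details (equal rows obstruct solvability of $M\textbf{x}=\textbf{e}_i$, equal columns reduce to equal rows via transposition). Your care about $\mathbb{Z}_\ell$ not being a field is appropriate but, as you note, not actually needed for the argument to go through.
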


Note that $\mathbb{Z}_\ell$ is generally not a field, but we can still use the determinant of a matrix to determine its invertibility.  In particular, a matrix is invertible if and only if its determinant is a unit \cite[Corollary~2.21]{Brown:93}.  As in standard linear algebra, we can apply row operations to a matrix and leave the determinant unchanged or multiplied by a unit.  In particular, the typical elementary row operations (multiplying a row by a unit in $\mathbb{Z}_{\ell}$, adding an integer multiple of one row to another, and switching two rows) have no effect on whether or not the determinant is a unit.


We say that $M$ is \emph{row equivalent} to $M'$ if and only if $M$ can be turned into $M'$ by applying a sequence of elementary row operations. Since elementary row operations do not change whether or not the determinant is a unit, if $M, M' \in M_n(\mathbb{Z}_\ell)$ such that $M$ is row equivalent to $M'$ then $M$ is invertible if and only if $M'$ is invertible. Thus, if $M$ and $M'$ are row equivalent then a common vertex set $V$ is $M$-AW if and only if $V$ is $M'$-AW. 


Thus, we can determine whether or not a set $V$ is $M$-AW by applying some elementary row operations to $M$ to obtain $M'$, and then determining whether or not $V$ is $M'$-AW. 

We now apply this strategy to the neighborhood Lights Out game.  Our general strategy is to use elementary row operations to transform $N(G)$ into a matrix whose Lights Out game is easy to play.  Our first result using this technique will be for graphs that have a dominating vertex. Given graphs $G$ and $H$ we use $G\cup H$ to denote the disjoint union of the graphs.  

\begin{thm}
\label{subsetjoindom}  Let $G$ be a graph.  Then $\overline{G \cup K_1}$ is $N$-AW if and only if $G$ is $A$-AW.
\end{thm}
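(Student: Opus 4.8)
The plan is to write down the neighborhood matrix of $\overline{G\cup K_1}$ explicitly, perform a single round of row operations using the row of the dominating vertex, recognize $A(G)$ in the result, and then apply Lemma~\ref{matrixLO}(\ref{matrixLOinv}). To set up: the vertex of $K_1$ is isolated in $G\cup K_1$, so in the complement it becomes a vertex $u$ adjacent to every vertex of $G$, while among the vertices of $G$ the edges of $\overline{G\cup K_1}$ are exactly the edges of $\overline{G}$. Ordering the vertices of $\overline{G}$ first and $u$ last (with $n=|V(G)|$), this gives the block form
\[
N(\overline{G\cup K_1})=\begin{pmatrix} N(\overline{G}) & \mathbf{1}\\ \mathbf{1}^{T} & 1\end{pmatrix},
\]
where $\mathbf{1}$ is the all-ones column vector of length $n$.

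Next I would subtract the last row from each of the first $n$ rows. This is a sequence of elementary row operations, so by the discussion preceding this theorem it changes neither invertibility over $\mathbb{Z}_\ell$ nor the always-winnable status of the common vertex set. In row $i$ (with $i\le n$), the last entry becomes $1-1=0$; the diagonal entry becomes $N(\overline{G})_{ii}-1=0$; and for $j\ne i$ the entry becomes $N(\overline{G})_{ij}-1$, which equals $0$ when $v_i$ and $v_j$ are non-adjacent in $G$ (so adjacent in $\overline{G}$, entry was $1$) and equals $-1$ when $v_i$ and $v_j$ are adjacent in $G$ (entry was $0$). Thus the top-left block becomes $-A(G)$ and the top-right block becomes $\mathbf{0}$, yielding the row-equivalent matrix
\[
M'=\begin{pmatrix} -A(G) & \mathbf{0}\\ \mathbf{1}^{T} & 1\end{pmatrix}.
\]

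Finally, since $M'$ is block lower triangular, $\det M'=\det(-A(G))\cdot 1=(-1)^{n}\det A(G)$. As $(-1)^{n}$ is a unit in $\mathbb{Z}_\ell$, $\det M'$ is a unit if and only if $\det A(G)$ is a unit; equivalently, $M'$ is invertible over $\mathbb{Z}_\ell$ if and only if $A(G)$ is. Chaining equivalences: $\overline{G\cup K_1}$ is $N$-AW $\iff$ $N(\overline{G\cup K_1})$ is invertible (Lemma~\ref{matrixLO}(\ref{matrixLOinv})) $\iff$ $M'$ is invertible (row equivalence) $\iff$ $A(G)$ is invertible $\iff$ $G$ is $A$-AW. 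There is no deep obstacle here; the only point requiring care is the bookkeeping in the complement — verifying that the ``$-1$'' shift produced by the dominating vertex's row turns $N(\overline{G})$ into $-A(G)$ — and observing that the leftover sign $(-1)^{n}$ is harmless because $\pm 1$ is always a unit in $\mathbb{Z}_\ell$.
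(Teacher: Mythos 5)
Your proof is correct, and the first half coincides with the paper's: you write the same block form of $N(\overline{G\cup K_1})$ and perform essentially the same row reduction (the paper negates the first $n$ rows and then adds the last row, landing on $+A(G)$ in the top-left block; you subtract the last row and land on $-A(G)$, an immaterial difference). Where you diverge is in the second half. The paper does \emph{not} finish with a determinant computation: it interprets the reduced matrix $M$ as a Lights Out game (toggles in $V(G)$ play the adjacency game on $G$ and feed into the label of the extra vertex $u$, while toggling $u$ affects only itself) and then argues operationally in both directions, exhibiting a winning strategy when $G$ is $A$-AW and a non-winnable labeling when it is not. You instead observe that the reduced matrix is block lower triangular, so $\det M'=(-1)^n\det A(G)$, and invoke the unit-determinant criterion for invertibility over $\mathbb{Z}_\ell$ together with Lemma~\ref{matrixLO}(\ref{matrixLOinv}). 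Both routes are sound: the block-triangular determinant identity and the criterion ``invertible iff the determinant is a unit'' hold over any commutative ring, and $(-1)^n$ is indeed always a unit. Your argument is shorter and more mechanical; the paper's game-theoretic finish is constructive (it produces the winning toggles) and, more importantly, it rehearses the style of reasoning that the paper needs later (e.g., in Theorems~\ref{joinp4} and~\ref{pendantremove}) where the reduced matrices are not block triangular and a one-line determinant factorization is unavailable.
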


\begin{proof}
We have
\[
N(\overline{G \cup K_1}) = \left[
\begin{array}{c|c}
N(\overline{G}) & 1 \\ \hline
1 & 1 \\ 
\end{array}
\right]
\]
where the last row and column represent $V(K_1)$. We multiply each row except the last by the unit $-1$ and then add to each of those rows the last row.  This turns every $1$ of $N(\overline{G})$ into a $0$ and vice versa, resulting in the adjacency matrix of $G$.  Thus, we get that $N(\overline{G \cup K_1})$ is row equivalent to
\[
M = \left[
\begin{array}{c|c}
A(G) & 0 \\ \hline
1 & 1 \\
\end{array}
\right]
.\]

Thus it suffices to show that $\overline{G \cup K_1}$ is $M$-AW if and only if $G$ is $A$-AW.  Note that the $M$-Lights Out game is played as the $A$-Lights Out game on $G$, each vertex toggled in $V(G)$ adds 1 to the label of the vertex $v \in V(K_1)$,  and toggling $v$ increases its own label by 1 and has no other effect.

First suppose that $G$ is $A$-AW, and let  $\pi$ 
be a labeling of $\overline{G\cup K_1}$.
Since $G$ is $A$-AW, we can toggle the vertices of $G$ in a way that wins the $A(G)$-Lights Out game for the labeling $\pi \mid_{V(G)}$.  At this point, every vertex has label 0 except $v$.  We then toggle $v$ until it has label 0.  In the $M$-Lights Out game, toggling $v$ has no effect on labels of other vertices, so this wins the $M$-Lights Out game. Thus $\overline{G \cup K_1}$ is $M$-AW.

Conversely, suppose that $G$ is not $A$-AW.  We then give $V(G)$ a labeling that is not $A(G)$-winnable.  In the $M$-Lights Out game the only vertices that affect the labels of $V(G)$ are the vertices in $V(G)$, so this is not a winnable labeling for the $M$-Lights Out game.  Thus, $\overline{G \cup K_1}$ is not $M$-AW, which completes the proof.
\end{proof}

\section{Winnability in Dense Graphs} \label{Winnability}

In proving Theorem~\ref{subsetjoindom}, we use elementary row operations to convert the neighborhood Lights Out game on a dense graph into something resembling the adjacency Lights Out game on a sparse graph.  Since the extremal problem we are working on seeks dense, winnable graphs and playing the game on sparse graphs is typically easier, this technique works to our advantage.  The next result  allows us to make a graph denser by removing an edge from the complement graph  when the complement graph is combined with $P_4$.  

\begin{thm} \label{joinp4}
Let $G$ be a graph, $U \subseteq V(G)$ and $v$ be an end vertex of $P_4$.  Let $H$ be the graph where $V(H)=V(G) \cup V(P_4)$ and $E(H) = E(G) \cup E(P_4) \cup \{ uv: u \in U\}$.  Then $\overline{H}$ is $N$-AW if and only if $\overline{G \cup P_4}$ is $N$-AW.
\end{thm}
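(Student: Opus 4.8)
The plan is to follow the template of the proof of Theorem~\ref{subsetjoindom}: compute the neighborhood matrix of $\overline H$, apply elementary row and column operations, and reduce it to a block form whose invertibility over $\mathbb{Z}_\ell$ is visibly the same as that of $N(\overline{G\cup P_4})$; then conclude with Lemma~\ref{matrixLO}(\ref{matrixLOinv}). (Column operations of the standard types multiply the determinant by a unit exactly as row operations do, so by Lemma~\ref{matrixLO}(\ref{matrixLOinv}) they also preserve whether a vertex set is AW; I will use both freely.) Label $P_4$ as $v\,a_2\,a_3\,a_4$ with $v$ the prescribed endpoint, and order the vertices of $\overline H$ as $V(G),v,a_2,a_3,a_4$. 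Then $N(\overline H)$ has $N(\overline G)$ as its $V(G)$-block; the row and column indexed by $v$, restricted to $V(G)$, equal the indicator vector of $V(G)\setminus U$ (in $H$ the only edges between $V(G)$ and $P_4$ join $U$ to $v$, so in $\overline H$ exactly the vertices outside $U$ are adjacent to $v$); each row and column indexed by $a_2,a_3,a_4$, restricted to $V(G)$, is all-ones; and the $\{v,a_2,a_3,a_4\}$-block is $N(\overline{P_4})$. Since $\overline{P_4}\cong P_4$, this last block has the feature that the $a_4$-row minus the $a_2$-row equals $e_v^{T}$ and, symmetrically, the $a_4$-column minus the $a_2$-column equals $e_v$. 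Note finally that $N(\overline{G\cup P_4})$ is precisely the $U=\emptyset$ instance of all this.

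The reduction itself is short. First subtract the $a_2$-column from the $a_4$-column, then the $a_2$-row from the $a_4$-row; this leaves the $a_4$-column equal to $e_v$ and the $a_4$-row equal to $e_v^{T}$ (in particular the $(a_4,a_4)$ entry becomes $0$). Next, using this $a_4$-row, clear the $v$-column in every other row, and symmetrically, using the $a_4$-column, clear the $v$-row in every other column; because the pivot lines are $e_v^{T}$ and $e_v$, each such operation alters only the $v$-column (resp.\ $v$-row) and disturbs nothing else. After this the indices $v$ and $a_4$ are decoupled from the rest: the matrix is block diagonal, carrying $\left[\begin{smallmatrix}0&1\\1&0\end{smallmatrix}\right]$ on $\{v,a_4\}$ (invertible, as $-1$ is a unit) and, on $V(G)\cup\{a_2,a_3\}$, the matrix
\[
M' \;=\; \left[\begin{array}{c|cc} N(\overline G) & \mathbf 1 & \mathbf 1 \\ \hline \mathbf 1^{T} & 1 & 0 \\ \mathbf 1^{T} & 0 & 1 \end{array}\right]
\]
(which one may recognize as $N(\overline{G\cup K_2})$, though that is not needed).

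The step I expect to require the most care is the bookkeeping that makes the whole argument work: I must verify that throughout the reduction the rows and columns indexed by $u\in U$ and by $u\in V(G)\setminus U$ are brought to the same shape — all the $U$-dependence lives in the $v$-row and $v$-column, which get cleared and then split off — so that the reduced matrix, and in particular $M'$, is literally identical whether one starts from $\overline H$ or from $\overline{G\cup P_4}$. Concretely this means tracking the $v$-column entries ($0$ on $U$, $1$ off $U$ to begin with) until they are all zero, and checking that the operations, being confined to the $v$- and $a_4$-indexed lines, never touch the $N(\overline G)$-block or the $a_2,a_3$ rows and columns restricted to $V(G)$. Granting that, $N(\overline H)$ and $N(\overline{G\cup P_4})$ are each equivalent to $\left[\begin{smallmatrix}0&1\\1&0\end{smallmatrix}\right]\oplus M'$, hence one is invertible over $\mathbb{Z}_\ell$ if and only if the other is, and Lemma~\ref{matrixLO}(\ref{matrixLOinv}) gives the claim.
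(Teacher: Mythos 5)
Your proof is correct, and it takes a genuinely different route from the paper's. The paper argues by playing the game: it first uses the fact that $P_4$ is $N$-AW to reduce to labelings that vanish on $V(P_4)$, then explicitly transfers a winning sequence of toggles from $\overline{H}$ to $\overline{G\cup P_4}$ (and back), carefully bookkeeping how many times the path vertices must be pressed. You instead never play the game at all: you reduce $N(\overline{H})$ by elementary row \emph{and column} operations to $\left[\begin{smallmatrix}0&1\\1&0\end{smallmatrix}\right]\oplus M'$ with $M'$ independent of $U$, observe that $N(\overline{G\cup P_4})$ is the $U=\emptyset$ case and reduces to the identical matrix, and invoke Lemma~\ref{matrixLO}(\ref{matrixLOinv}). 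I checked the key computations: $N(\overline{P_4})$ in the order $v,a_2,a_3,a_4$ does satisfy ($a_4$-row) $-$ ($a_2$-row) $=e_v^{T}$, the $a_2$- and $a_4$-rows agree on the all-ones $V(G)$-part, and since the pivot line is $e_v^{T}$ (resp.\ $e_v$) the clearing steps touch only the $v$-column (resp.\ $v$-row), so the $V(G)\cup\{a_2,a_3\}$ principal submatrix is never disturbed and all $U$-dependence is annihilated. Your use of column operations goes beyond the row operations the paper sets up, but your justification is sound: column operations also multiply the determinant by a unit, and AW-ness is equivalent to the determinant being a unit. The trade-off is that your argument is shorter and cleaner but purely existential, whereas the paper's constructive proof exhibits explicit winning strategies and toggle counts; since the theorem is only applied downstream (Corollary~\ref{pathrestrictions}) through the bare AW equivalence, your version would serve as a drop-in replacement.
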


\begin{proof}
Let $V=V(\overline{G \cup P_4})=V(\overline{H})$, and let $P_4$ in both $G \cup P_4$ and $H$ be given by $vv_2v_3v_4$.  Note that $\overline{P_4}$ is the path given by $v_2v_4vv_3$.  By \cite[Thm. 4.3]{paper11}, $P_4$ is $N$-AW for all $\ell$. It follows that in both $\overline{H}$ and $\overline{G \cup P_4}$, the subgraph induced by $\{ v,v_2,v_3,v_4\}$ is $N$-AW.  Thus, we can toggle the vertices of $P_4$ in such a way that each vertex in $P_4$ has label zero.

We first assume $\overline{H}$ is $N$-AW and show $\overline{G \cup P_4}$ is $N$-AW.  To that end, we let $\pi: V \rightarrow \mathbb{Z}_\ell$ and show that $\pi$ is winnable on $\overline{G \cup P_4}$.  As discussed above, we can assume that $\pi \mid_{V(P_4)} = 0$.  Since $\overline{H}$ is $N$-AW, $\pi$ is winnable on $\overline{H}$.  In this winning strategy, let each $w \in V(G)$ be toggled $x_w$ times, and let $v_2$ be toggled $x$ times.  If we apply this strategy to $\overline{H}$ but refrain from toggling $v$, $v_3$, and $v_4$, this leaves $v_2$ and $v_4$ with label $x+\sum_{w \in V(G)} x_w$, $v$ with label $\sum_{w \in V(G)-U}x_w$, and $v_3$ with label $\sum_{w \in V(G)}x_w$.  Since $v_4$ is the only remaining vertex adjacent to $v_2$, $v_4$ must be toggled $-x-\sum_{w \in V(G)}x_w$ times.  This will leave both $v_2$ and $v_4$ with label zero.  Since $v$ is the only remaining vertex adjacent to $v_4$, this means we do not toggle $v$ at all.  Thus, $v_3$ (the only remaining untoggled vertex) must make its own label zero by being toggled $-\sum_{w \in V(G)}x_w$ times.  This completes winning the game on $\overline{H}$.  An important observation is that the vertices of $P_4$ are collectively toggled $-2\sum_{w \in V(G)}x_w$ times, and none of those toggles come from $v$.  Since each of $v_2$, $v_3$, and $v_4$ is adjacent to every vertex in $V(\overline{G})$, this implies that toggling the vertices of $P_4$ adds $-2\sum_{w \in V(G)}x_w$ to the labeling of each vertex in $V(G)$.  Looked at another way, if we only toggle the vertices in $V(G)$, this leaves each such vertex with label $2\sum_{w \in V(G)}x_w$.

With the initial labeling $\pi$, we now apply the above toggling strategy to $V(G)$ in $\overline{G \cup P_4}$.  By the above, each vertex in $V(G)$ has label $2\sum_{w \in V(G)}x_w$.  Since $v$ and each of the $v_i$ are adjacent to all vertices in $V(G)$, it follows that toggling the vertices in $V(G)$ leaves $v$ and each $v_i$ with label $\sum_{w \in V(G)}x_w$.  Each of $v_2$ and $v_3$ is now toggled $-\sum_{w \in V(G)}x_w$ times.  This makes the label of $v$ and each $v_i$ zero.  In addition, it adds $-2\sum_{w \in V(G)}x_i$ to the labels of $V(G)$, which gives each of them label zero as well.

We proceed similarly for the converse.  Assume $\overline{G \cup P_4}$ is $N$-AW, and let $\pi: V \rightarrow \mathbb{Z}_\ell$ be a labeling as above with $\pi \mid_{V(P_4)}=0$.  We need to prove that $\pi$ is winnable on $\overline{H}$.   As before, there is a winning toggling strategy for $\overline{G \cup P_4}$, where each $w \in V(G)$ is toggled $x_w'$ times, and $v_2$ is toggled $x'$ times.  At this point, we determine the toggles for $v$ and each remaining $v_i$ as before, and it follows that the vertices are collectively toggled $-2\sum_{w \in V(G)}x_w'$ times.  As before, this implies that toggling the vertices of $V(G)$ results in the label of each vertex in $V(G)$ being $2\sum_{w \in V(G)}x_w'$.

Again, we apply the above toggling strategy just to the vertices of $V(G)$ in $\overline{H}$.  This leaves each of $v_2$, $v_3$, and $v_4$ with label $\sum_{w \in V(G)}x_w'$ and $v$ with label $\sum_{w \in V(G)-U}x_w'$.  We then win the game as follows: $v_2$ is toggled $-2\sum_{w \in U}x_w'-\sum_{w \in V(G)-U} x_w'$ times, $v_3$ is toggled $-\sum_{w \in V(G)}x_w'$ times, and $v_4$ is toggled $\sum_{w \in U}x_w'$ times.
\end{proof}

We can apply this result to complements of graphs that include components that are path graphs. For $k\in\mathbb{N}$ and $G$ a graph we use $kG$ to denote $k$ disjoint copies of $G$.

\begin{cor} \label{pathrestrictions}
Let $G$ be a graph of order $n$ that is $N$-AW.
\begin{enumerate}
\item \label{nopath3} No component of $\overline{G}$ can be $P_k$ such that $k$ is congruent to 3 mod 4.
\item \label{onepath4} At most one component of $\overline{G}$ can be $P_k$ such that $k$ is congruent to 1 modulo 4.
\item \label{extpathbound} If $\overline{G}$ is an $(n,\ell)$-extremal graph, then no component of $G$ is a path of order more than 4.
\end{enumerate}
\end{cor}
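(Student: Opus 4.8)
The plan is to use Theorem~\ref{joinp4} repeatedly to ``peel'' copies of $P_4$ off the path components of $\overline{G}$. Used in one direction it shows that replacing a long path component by a shorter path together with extra disjoint $P_4$'s does not change whether the complement is $N$-AW, and this reduces the first two parts to configurations forbidden by Corollary~\ref{twins}. Used in the other direction it shows that a path component of $G$ of order at least $5$ lets us exhibit a strictly larger $N$-AW graph, which handles the third part.

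The object to set up is the \emph{peeling step}: if $W$ is $N$-AW and $\overline{W}=P_k\cup R$ with $P_k=w_1w_2\cdots w_k$ a component and $k\ge 5$, then $\overline{P_{k-4}\cup P_4\cup R}$ is $N$-AW. To prove it I apply Theorem~\ref{joinp4} with the theorem's ``$G$'' taken to be $(w_5w_6\cdots w_k)\cup R=P_{k-4}\cup R$, with the copy of $P_4$ on $\{w_1,w_2,w_3,w_4\}$, with end vertex $v=w_4$, and with $U=\{w_5\}$. The single added edge $w_4w_5$ fuses the two paths, so the theorem's ``$H$'' has exactly the edges of $R\cup P_k$; hence $\overline{H}=W$, and the theorem gives that $W$ is $N$-AW if and only if $\overline{P_{k-4}\cup R\cup P_4}$ is $N$-AW. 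Iterating the peeling step, starting from $\overline{G}$ and repeatedly shortening the path component by $4$, turns ``$G$ is $N$-AW'' into ``$\overline{P_3\cup tP_4\cup R}$ is $N$-AW'' when $k\equiv 3\pmod 4$ (with $t=\tfrac{k-3}{4}$) and into ``$\overline{P_1\cup tP_4\cup R}$ is $N$-AW'' when $k\equiv 1\pmod 4$ (with $t=\tfrac{k-1}{4}$); no peeling is needed when $k\in\{1,3\}$.

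I would then finish parts~(\ref{nopath3}) and~(\ref{onepath4}) with Corollary~\ref{twins}. If a graph $S$ has a component $P_3=abc$, then in $\overline{S}$ both $a$ and $c$ have closed neighborhood $\{a,c\}\cup(V(S)\setminus\{a,b,c\})$, so they are $N$-twins and $\overline{S}$ is not $N$-AW; taking $S=P_3\cup tP_4\cup R$ contradicts the peeling conclusion, proving~(\ref{nopath3}). For~(\ref{onepath4}), two path components congruent to $1$ modulo $4$ peel down to two isolated vertices $a,b$ of the final graph $S$, and in $\overline{S}$ both have closed neighborhood all of $V(S)$, again $N$-twins, again contradicting the peeling conclusion. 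For part~(\ref{extpathbound}), suppose $\overline{G}$ is $(n,\ell)$-extremal and $G$ has a component $P_k=w_1\cdots w_k$ with $k\ge 5$, with $R$ the rest of $G$. Apply Theorem~\ref{joinp4} with ``$G$'' $=(w_5\cdots w_k)\cup R=P_{k-4}\cup R$, the copy of $P_4$ on $\{w_1,w_2,w_3,w_4\}$, $v=w_4$, and $U=\{w_5\}$; then ``$H$'' $=R\cup P_k=G$, so the theorem says $\overline{G}$ is $N$-AW if and only if $\overline{P_{k-4}\cup P_4\cup R}$ is. But $P_{k-4}\cup P_4\cup R$ is $G$ with the single edge $w_4w_5$ deleted, so its complement has one more edge than $\overline{G}$ and is $N$-AW, contradicting the maximality of $\overline{G}$.

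The substantive difficulty, such as it is, is bookkeeping: Theorem~\ref{joinp4} must be invoked with exactly the right choice of its ``$G$'', its end vertex, and its set $U$ so that the theorem's ``$H$'' coincides with the graph in hand, which forces $|U|=1$ and pins down how deleting or restoring the single edge $w_4w_5$ fuses $P_{k-4}$ with $P_4$ into $P_k$. Beyond that one should check the degenerate cases --- $R$ empty, $k\in\{1,3\}$ where the peeling is vacuous, and the fact that the $P_4$'s created by earlier peels are just absorbed into $R$ and never interfere with later peels --- but these are routine.
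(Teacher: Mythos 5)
Your proposal is correct and follows essentially the same route as the paper: apply Theorem~\ref{joinp4} to peel $P_4$'s off long path components (reducing them modulo 4 without changing $N$-AW status), then invoke Corollary~\ref{twins} on the resulting $P_3$ or multiple $P_1$ components for parts~(\ref{nopath3}) and~(\ref{onepath4}), and use a single peel to contradict maximality for part~(\ref{extpathbound}). Your write-up is in fact more explicit than the paper's about how Theorem~\ref{joinp4} must be instantiated (the choice $U=\{w_5\}$, $v=w_4$) to make its ``$H$'' coincide with the graph at hand.
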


\begin{proof}
For (\ref{nopath3}), let $P$ be a component of $\overline{G}$ that is a path of order $4k+3$ with $k \in \mathbb{N} \cup \{0\}$.  By Lemma~\ref{joinp4}, if we replace $P$ in $\overline{G}$ with $kP_4 \cup P_3$, the complement of the resulting graph is $N$-AW if and only if $G$ is.  Thus, we can assume $P=P_3$.  However, the end vertices of the $P_3$ component in $\overline{G}$ are $N(G)$-twins in $G$, so $G$ is not $N$-AW by Corollary \ref{twins}.

For (\ref{onepath4}), we apply Lemma~\ref{joinp4} again. If we have more than one component of $\overline{G}$ is a path with order congruent to 1 modulo 4, we can assume that all such components are $P_1$.  But the vertices of these components are all $N(G)$-twins, and so in order for $G$ to be $N$-AW, $\overline{G}$ can have at most one component be a path of order congruent to 1 modulo 4.

Finally, (\ref{extpathbound}) follows from the fact that if we replace the component of $\overline{G}$ that is $P_k$ with $k > 4$ with $P_{k-4} \cup P_4$, the complement of the resulting graph will be $N$-AW with larger size, thus contradicting the assumption that $\overline{G}$ is $(n,\ell)$-extremal.
\end{proof}

Given a matrix $M$, let $\pi$ be a labeling of $V(M)$.  For $U \subseteq V(M)$ and $r \in \mathbb{Z}_\ell$, we define the labeling $\pi_{U,r}: V(M) \rightarrow \mathbb{Z}_\ell$ as 
\[
\pi_{U,r}(v) =\begin{cases} \pi(v)+r & v \in U \\ \pi(v) & v \notin U \end{cases}  .
\]
In the case $U=V(M)$, we write $\pi_{V(M),r}=\pi_r$.

%

When we encounter these labelings in the proof of Theorem~\ref{pendantremove}, we are concerned not only if certain labelings are winnable, but also how many toggles can be used to win the game for these labelings.  Recall that $\textbf{0}$ is the zero labeling, which assigns to every vertex a label of 0.

\begin{Def}
Let $M \in M_n(\mathbb{Z}_\ell)$, $r \in \mathbb{Z}_\ell$ and $U \subseteq V(M)$.  We define the set of \emph{$U$-toggling numbers} $T_U^M(r) \subseteq \mathbb{Z}_\ell$ as follows.  We say $t \in T_U^M(r)$ if the elements of $V(M)$ can be toggled to win the $M$-Lights Out game with initial labeling $\textbf{0}_{U,r}$ in such a way that the vertices in $U$ are collectively toggled $t$ times.
\end{Def}

Note that each number in $T_U^M(0)$ corresponds to a set of toggles that leaves the initial labeling unchanged. Such sets of toggles are called \emph{null toggles}.  Null toggles function very similarly to null spaces of a linear transformation.  For instance, there exist two sets of toggles with $t$ toggles and $t'$ toggles of the vertices of $U$, respectively, to have the same effect on the labels of $V(M)$ if and only if $t'=t+q$ for some $q \in T_U^M(0)$.

In both the neighborhood and adjacency Lights Out games, winning a particular game is equivalent to winning the game on each individual connected component.  This simplifies the computation of toggling numbers in these cases.  Let $G$ be a graph with $U \subseteq V(G)$ and $M$ is $N(G)$ or $A(G)$. If $G_1,G_2, \ldots,G_c$ are the connected components of $G$, and $U_i = U \cap V(G_i)$, then  $T_U^M(r) = \{ \sum_{i=1}^c t_i : t_i \in T_{U_i}^{M_i}(r) \}$, where $M_i$ is $N(G_i)$ or $A(G_i)$, respectively.

Suppose we have two different sets of toggles and look at their effect individually on each vertex. For each $v \in V(M)$, suppose that the label of $v$ is increased by $r_v$ for the first set of toggles and is increased by $s_v$ for the second set of toggles.  Then combining the two sets of toggles increases each $v \in V(M)$ by $r_v+s_v$.  We use this observation to prove the following.

\begin{lem} \label{basictoggling}
Let $n\in\mathbb{N}$ and $M \in M_n(\mathbb{Z}_\ell)$, let $U \subseteq V(M)$, and let $r \in \mathbb{N}$ be minimal such that $T_U^M(r) \neq \emptyset$.  Then $r \mid \ell$, and $T_U^M(s) \neq \emptyset$ if and only if $r \mid s$.
\end{lem}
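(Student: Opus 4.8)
The plan is to exploit the additivity of toggle sets observed in the paragraph immediately preceding the lemma: if one set of toggles changes the labels by the vector $(r_v)_{v\in V(M)}$ and another changes them by $(s_v)$, then performing both in succession changes them by $(r_v+s_v)$. I will use this to show that the set $S=\{s\in\mathbb{Z}_\ell : T_U^M(s)\neq\emptyset\}$ is a subgroup of $\mathbb{Z}_\ell$, together with a bookkeeping statement tracking how many toggles in $U$ are used. Once $S$ is a subgroup it is cyclic, generated by its minimal positive element, which must divide $\ell$ since every subgroup of $\mathbb{Z}_\ell$ has order dividing $\ell$ and is generated by $\ell/d$ for the appropriate divisor $d$; and $r\mid s$ for $s\in S$ is exactly the statement that $S$ is the cyclic subgroup generated by $r$.

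First I would make precise the combination operation. Suppose $s\in T_U^M(r_1)$ and $s'\in T_U^M(r_2)$, witnessed by toggle vectors $\mathbf{x},\mathbf{x}'\in\mathbb{Z}_\ell^n$ (here $\mathbf{x}[i]$ is the number of times $v_i$ is toggled) with $M\mathbf{x}=-\mathbf{0}_{U,r_1}$ and $M\mathbf{x}'=-\mathbf{0}_{U,r_2}$, and with $\sum_{v_i\in U}\mathbf{x}[i]=s$, $\sum_{v_i\in U}\mathbf{x}'[i]=s'$. Then $\mathbf{x}+\mathbf{x}'$ satisfies $M(\mathbf{x}+\mathbf{x}')=-\mathbf{0}_{U,r_1+r_2}$ (since $\mathbf{0}_{U,r_1}+\mathbf{0}_{U,r_2}=\mathbf{0}_{U,r_1+r_2}$ as vectors in $\mathbb{Z}_\ell^n$), and the total number of $U$-toggles is $s+s'$. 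Hence $s+s'\in T_U^M(r_1+r_2)$; in particular $S$ is closed under addition. For inverses: if $r_1\in S$, iterating this gives $kr_1\in S$ for every $k\ge1$; choosing $k$ so that $kr_1\equiv 0$ and $(k-1)r_1\equiv -r_1\pmod\ell$ (or more directly, noting $\mathbb{Z}_\ell$ is finite so the additive semigroup generated by $r_1$ is a subgroup) shows $-r_1\in S$. Also $0\in S$ since $\mathbf{0}_{U,0}=\mathbf{0}$ is won by the empty toggle. Thus $S\le\mathbb{Z}_\ell$.

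Now let $r\in\mathbb{N}$ be minimal with $r\in S$, as in the statement. Since $S$ is a subgroup of the cyclic group $\mathbb{Z}_\ell$, $S=\langle d\rangle$ where $d$ is the unique divisor of $\ell$ with $|S|=\ell/d$, and $d$ is the least positive element of $S$; therefore $r=d\mid\ell$. Moreover $S=\langle r\rangle=\{0,r,2r,\dots\}$ consists of exactly the multiples of $r$ modulo $\ell$, which (since $r\mid\ell$) are exactly those $s\in\mathbb{Z}_\ell$ with $r\mid s$. This gives $T_U^M(s)\neq\emptyset\iff r\mid s$, completing the proof.

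The only subtle point — the step I'd flag as the main obstacle — is making sure the additivity argument genuinely respects the $U$-toggle count and the interpretation of labelings as vectors in $\mathbb{Z}_\ell^n$ rather than $\mathbb{Z}^n$: the counts $s,s'$ live in $\mathbb{Z}_\ell$, and one must check that "$s+s'$ $U$-toggles" is the correct value in $\mathbb{Z}_\ell$, i.e. that the definition of $T_U^M$ is stable under replacing a toggle vector $\mathbf{x}$ by $\mathbf{x}+\mathbf{y}$ with $\sum_{v_i\in U}\mathbf{y}[i]\equiv 0$ and $M\mathbf{y}=\mathbf{0}$ — but this is precisely the null-toggle remark made just before the lemma, so it can be invoked directly. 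Everything else is the standard structure theory of subgroups of $\mathbb{Z}_\ell$.
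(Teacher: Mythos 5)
Your proof is correct and follows exactly the route the paper takes: the paper's entire proof is the remark that $\{s \in \mathbb{Z}_\ell : T_U^M(s) \neq \emptyset\}$ is an additive subgroup of $\mathbb{Z}_\ell$, from which the result follows by the structure of subgroups of cyclic groups. You have simply filled in the details (additivity of toggle vectors, closure under inverses via finiteness, and the identification of the subgroup with the multiples of its least positive element) that the paper leaves to the reader.
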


\begin{proof}
It is easy to show that $\{ s \in \mathbb{Z}_\ell: T_U^M(s) \ne \emptyset\}$ is an additive subgroup of $\mathbb{Z}_\ell$.  The result follows easily.
\end{proof}

For graphs with a pendant vertex, it will be helpful to understand the relationship between winning the adjacency game on both the graph and a certain subgraph.

\begin{lem} \label{noU}
Let $G$ be a graph with a pendant vertex $p$. Let $v$ be the neighbor of $p$ in $G$, let $G'$ be the graph induced by $V(G)-\{p,v\}$, let $U=N_G(v)-\{p\}$. Finally, let $\pi$ be a labeling of $G$, and define the labeling $\pi'$ on $G'$ by
\[
\pi'(w) = \begin{cases} \pi(w) - \pi(p) & w \in U \\ \pi(w) & \hbox{otherwise} \end{cases}.
\]
Then
\begin{enumerate}
    \item \label{ptop'} $\pi'$ is $A(G')$-winnable with $t$ toggles from $V(G')-U$ (along with perhaps some toggles from $U$) if and only if $\pi$ is $A(G)$-winnable with $t-\pi(v)-\pi(p)$ toggles from $V(G)$.
    \item \label{togsame} If $s \in \mathbb{Z}_\ell$, then $T_{V(G)}^{A(G)}(s) = \{ t-2s: t \in T_{V(G')-U}^{A(G')}(s)\}$.
\end{enumerate}
\end{lem}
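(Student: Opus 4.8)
The plan is to work with the adjacency Lights Out game on $G$ and exploit the rigidity a pendant vertex forces. Encode a candidate strategy by a toggle count $x_u \in \mathbb{Z}_\ell$ for each $u \in V(G)$; writing the system $A(G)\mathbf{x}=-\mathbf{b}$ of Lemma~\ref{matrixLO} out row by row, such a set of toggles wins $\pi$ exactly when the final label of each vertex is $0$. First I would record the equation at $p$: since $N_G(p)=\{v\}$, it reads $\pi(p)+x_v=0$, so \emph{every} winning strategy on $G$ has $x_v=-\pi(p)$. Likewise the equation at $v$ reads $\pi(v)+x_p+\sum_{u\in U}x_u=0$, which for any choice of the remaining toggles is satisfied by the single value $x_p=-\pi(v)-\sum_{u\in U}x_u$, so it imposes no constraint beyond pinning down $x_p$.

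The key step is to show that, once $x_v=-\pi(p)$ is substituted, the equations at the vertices of $V(G')$ are exactly the equations saying that $x|_{V(G')}$ wins $\pi'$ in the $A(G')$-game. For $w\in V(G')-U$ we have $N_G(w)\subseteq V(G')$ (as $p$'s only neighbor is $v$), so the $G$-equation at $w$ is literally the $G'$-equation, and $\pi'(w)=\pi(w)$. For $w\in U$ the $G$-equation at $w$ carries the extra term $x_v$, and substituting $x_v=-\pi(p)$ turns it into the $G'$-equation with $\pi(w)$ replaced by $\pi(w)-\pi(p)=\pi'(w)$. Hence restriction $x\mapsto x|_{V(G')}$ is a bijection from winning strategies for $\pi$ on $G$ to winning strategies for $\pi'$ on $G'$, whose inverse appends the forced values $x_v=-\pi(p)$ and $x_p=-\pi(v)-\sum_{u\in U}x_u$.

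Part (\ref{ptop'}) is then bookkeeping: for a strategy on $G$ coming from a strategy on $G'$,
\[
x_p + x_v + \sum_{w\in V(G')} x_w \;=\; \Bigl(-\pi(v)-\sum_{u\in U}x_u\Bigr) + \bigl(-\pi(p)\bigr) + \sum_{w\in V(G')} x_w \;=\; \sum_{w\in V(G')-U} x_w - \pi(v) - \pi(p),
\]
using $U\subseteq V(G')$; so a winning strategy for $\pi'$ on $G'$ with $t=\sum_{w\in V(G')-U}x_w$ toggles from $V(G')-U$ corresponds exactly to a winning strategy for $\pi$ on $G$ with $t-\pi(v)-\pi(p)$ toggles in total, and conversely. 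For part (\ref{togsame}) I would specialize to $\pi=\textbf{0}_s$: then $\pi(p)=\pi(v)=s$ and $\pi'$ equals $s$ on $V(G')-U$ and $0$ on $U$, i.e.\ $\pi'=\textbf{0}_{V(G')-U,\,s}$, so part (\ref{ptop'}) gives $t\in T_{V(G')-U}^{A(G')}(s)$ iff $t-2s\in T_{V(G)}^{A(G)}(s)$, which is the asserted identity.

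I do not anticipate a genuine obstacle; the entire content is that a pendant vertex pins down $x_v$ and that the vertices of $U$ consequently absorb a global shift of precisely $\pi(p)$. The one point needing care is the parenthetical ``(along with perhaps some toggles from $U$)'' in part (\ref{ptop'}): toggles of $U$-vertices are not counted by $t$ on the $G'$ side but they do enter the forced value of $x_p$ on the $G$ side, so one must verify — as the displayed computation does — that these $\sum_{u\in U}x_u$ terms cancel and leave the clean offset $-\pi(v)-\pi(p)$.
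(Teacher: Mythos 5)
Your proof is correct and follows essentially the same route as the paper's: both rest on the observation that the pendant vertex forces $v$ to be toggled exactly $-\pi(p)$ times, which shifts the labels of $U$ by $\pi(p)$ and reduces the game to the $A(G')$-game with labeling $\pi'$, while the equation at $v$ pins down the toggles of $p$ so that the $U$-toggles cancel in the total count. The only difference is presentational: you phrase the argument as a bijection between solution sets of $A(G)\mathbf{x}=-\mathbf{b}$ and $A(G')\mathbf{x}'=-\mathbf{b}'$, whereas the paper argues the two directions separately by describing toggling sequences and tracking labels.
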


\begin{proof}
For (\ref{ptop'}), we first assume $\pi'$ is $A(G')$-winnable with $t$ toggles from $V(G')-U$.  If we begin with the labeling $\pi$ on $G$, we begin by toggling the vertices as we would to win the adjacency game on $G'$ with labeling $\pi'$.  When we do this, we subtract $\pi(w)$ from the label of each $w \in V(G')-U$ and subtract $\pi(w)-\pi(p)$ from each $w \in U$.  This leaves each vertex in $V(G')-U$ with label 0 and each vertex in $U$ with label $\pi(p)$.  If the vertices of $U$ get toggled $t_U$ times, it also leaves $v$ with label $\pi(v)+t_U$.  Then $v$ is toggled $-\pi(p)$ times and $p$ is toggled $-\pi(v)-t_U$ times to win the game.  The total number of toggles is $t+t_U-\pi(p)-\pi(v)-t_U = t-\pi(p)-\pi(v)$.

If we assume $\pi$ is $A(G)$-winnable with $t-\pi(p)-\pi(v)$ toggles, note that since $v$ is the only neighbor to $p$ in $G$, $v$ must be toggled $-\pi(p)$ times to win the $A(G)$-Lights Out game with initial labeling $\pi$.  This leaves each $w \in V(G')$ with label $\pi'(w)$.  We then toggle the vertices of $G'$ as we do for winning the adjacency game on $G$ with initial labeling $\pi$.  This will win the adjacency game on $G'$ with initial labeling $\pi'$.  Note that if $t_U$ is the number of toggles among the vertices of $U$, then that leaves $v$ with label $\pi(v)+t_U$.  This requires $p$ to be toggled $-\pi(v)-t_U$ times.  If we let $t'$ be the number of toggles among vertices in $V(G')-U$, and if we total up the number of toggles altogether, we get $t-\pi(p)-\pi(v) = -\pi(p)+t'+t_U-\pi(v)-t_U = t'-\pi(p)-\pi(v)$.  Thus, $t=t'$, which proves the result.  Part (\ref{togsame}) follows from letting $\pi(w)=s$ for all $w \in V(G)$.
\end{proof}

\begin{thm} \label{pendantremove}
Let $G$ be a graph with a pendant vertex $p$.  Let $v$ be the neighbor of $p$ in $G$, and let $G'$ be the graph induced by $V(G)-\{p,v\}$.
\begin{enumerate}
\item \label{dompen} $G$ is $A(G)$-AW if and only if $G'$ is $A(G')$-AW.
\item \label{penneighbor} Let $r \in \mathbb{N}$ be minimum such that $T_{V(G)}^{A(G)}(r) \ne \emptyset$, and let $t \in T_{V(G)}^{A(G)}(r)$.  Then $\overline{G}$ is $N(\overline{G})$-AW if and only if
	\begin{enumerate}
	\item \label{allswinnable} For each labeling $\pi$ of $V(G)$, there is some  $s \in \mathbb{Z}_\ell$ such that $\pi_s$ is $A(G)$-winnable.
	\item \label{nullshit} For each $z \in \mathbb{Z}_\ell$, there exists $q \in T_{V(G)}^{A(G)}(0)$ such that there is a solution to $(r+t)x \equiv z+q$ (mod $\ell$).	\end{enumerate}
\end{enumerate}
\end{thm}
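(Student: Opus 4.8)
The plan for part (\ref{dompen}) is a direct application of Lemma~\ref{noU}(\ref{ptop'}): the map $\pi\mapsto\pi'$ sending labelings of $G$ to labelings of $G'$ is onto (given $\mu$, take $\pi(p)=\pi(v)=0$ and $\pi|_{V(G')}=\mu$, so $\pi'=\mu$), and by that lemma, with the toggle count unrestricted, $\pi$ is $A(G)$-winnable exactly when $\pi'$ is $A(G')$-winnable; hence all labelings of $G$ are $A(G)$-winnable iff all labelings of $G'$ are $A(G')$-winnable.

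For part (\ref{penneighbor}) I would first carry out a matrix reduction in the style of the proof of Theorem~\ref{subsetjoindom}. Order $V(\overline G)$ as the vertices of $G'$, then $v$, then $p$, and set $U=N_G(v)-\{p\}$, $W=V(G')-U$, writing $\mathbf 1_W$ for the $0$--$1$ vector supported on $W$, $\mathbf 1$ for the all-ones vector, and $J$ for the all-ones matrix. Since $p$ is adjacent in $\overline G$ to all vertices but $v$,
\[
N(\overline G)=\left[\begin{array}{c|c|c} J-A(G') & \mathbf 1_W & \mathbf 1\\ \hline \mathbf 1_W^{\,T} & 1 & 0\\ \hline \mathbf 1^{\,T} & 0 & 1\end{array}\right],
\]
and subtracting the $p$-row from each of the $G'$-rows and scaling those rows by $-1$ shows this is row equivalent to
\[
M=\left[\begin{array}{c|c|c} A(G') & -\mathbf 1_W & 0\\ \hline \mathbf 1_W^{\,T} & 1 & 0\\ \hline \mathbf 1^{\,T} & 0 & 1\end{array}\right],
\]
so $\overline G$ is $N(\overline G)$-AW iff $V(G)$ is $M$-AW. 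In the $M$-game, toggling the vertices of $G'$ plays the $A(G')$-game on $G'$, adding $1$ to $p$ per toggle and $1$ to $v$ per toggle of a vertex of $W$; toggling $v$ adds $1$ to $v$ and $-1$ to each vertex of $W$; toggling $p$ changes only its own label. Since $p$'s label is corrected last, $V(G)$ is $M$-AW iff for every labeling of $V(G')\cup\{v\}$ there are toggles $\mathbf y$ of $V(G')$ and a toggle count $a$ of $v$ winning the $A(G')$-game for the labeling obtained by subtracting $a$ from the label of each vertex of $W$, using exactly $-(\text{label of }v)-a$ toggles among the vertices of $W$.

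Next I would translate this condition back to the $A(G)$-game via Lemma~\ref{noU}(\ref{ptop'}). For a choice of $a$ and auxiliary values $\pi(p),\pi(v)\in\mathbb Z_\ell$, the labeling in the previous sentence is exactly the $\pi'$ associated with the labeling $\pi$ of $G$ having those values on $p$ and $v$, equal to the given label plus $\pi(p)$ on $U$ and the given label minus $a$ on $W$; and the lemma turns ``$\pi'$ won with that many toggles of $W$'' into ``$\pi$ is $A(G)$-winnable with a prescribed total number of toggles.'' Two structural facts about $A(G)$ make the quantifiers collapse: the column of $A(G)$ indexed by $v$ is the indicator of $U\cup\{p\}$ and the column indexed by $p$ is the standard vector at $v$, so both lie in the column space of $A(G)$, and since $\mathbf 1$ is the sum of these two columns and $\mathbf 1_W$, we get $\mathbf 1\equiv\mathbf 1_W$ modulo the column space of $A(G)$; consequently shifting the $U$-values and the $W$-values independently affects $A(G)$-winnability the same way as shifting the whole labeling, and the bare winnability requirement coming out of the reduction is exactly (\ref{allswinnable}). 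What is left is to match the prescribed toggle count: using Lemma~\ref{basictoggling}, the winnable constant labelings are the multiples of $r\mathbf 1$, and replacing a winnable $\pi$ by $\pi+kr\mathbf 1$ shifts the needed count by $-kr$ while (winning $r\mathbf 1$ exactly $k$ times at $t$ toggles each, then correcting with the two distinguished columns) shifts the achievable count by $kt$, so the two differ by a multiple of $r+t$; matching them for every input is precisely the solvability, for each $z\in\mathbb Z_\ell$, of $(r+t)x\equiv z+q\pmod\ell$ for some $q\in T^{A(G)}_{V(G)}(0)$, i.e., condition (\ref{nullshit}).

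The main obstacle is making the last paragraph precise in both directions. For sufficiency, granting (\ref{allswinnable}) and (\ref{nullshit}), one checks that for each input labeling of $\overline G$ the family of overall shifts of a suitable base labeling already contains an attainable winnable labeling with the required count. For necessity, one checks that $M$-AW applied to the inputs with $\mathbf d'=\mathbf 0$ already forces (\ref{nullshit}) --- which needs that every count achievable by a non-shift choice still lies in $(r+t)\mathbb Z_\ell+T^{A(G)}_{V(G)}(0)$ --- and that $M$-AW applied to the right-hand side $(\mathbf 1_U;0)$ of the reduced game forces the standard vector at $p$ to lie in the column space of $A(G)$ modulo $\mathbf 1$, which is what upgrades ``every labeling with $v$- and $p$-value $0$'' to ``every labeling'' in (\ref{allswinnable}). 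The degenerate cases $W=\emptyset$ (where $M$ is block triangular and the statement collapses, via (\ref{dompen}), to $A(G')$ being invertible) and $U=\emptyset$ (where $\{v,p\}$ is a $K_2$-component of $G$) should be dispatched separately. The toggle-count bookkeeping threading Lemma~\ref{noU}(\ref{ptop'}), Lemma~\ref{noU}(\ref{togsame}), and Lemma~\ref{basictoggling} is where the care is required.
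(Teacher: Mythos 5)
Your overall route is the same as the paper's: row-reduce $N(\overline G)$ so that the $G'$-block becomes $A(G')$, read off the reduced game, and transfer everything to the $A(G)$-game through Lemma~\ref{noU} and Lemma~\ref{basictoggling}. Your part (\ref{dompen}) is correct and in fact cleaner than the paper's direct toggling argument: you simply quote Lemma~\ref{noU}(\ref{ptop'}) with the toggle count suppressed and observe that $\pi\mapsto\pi'$ is surjective. Your block reduction, your description of the reduced game (in particular that the $p$-coordinate is trivial in your normal form, so the game lives on $V(G')\cup\{v\}$), and your identification of where $r+t$ comes from --- trading a global shift by $r$, won with $t$ toggles, against a change of $r$ in the required toggle count on $W=V(G')-U$ --- all match the mechanism the paper exploits.

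Two things keep this from being a proof. First, the entire equivalence with condition (\ref{nullshit}) is deferred to ``one checks,'' and that is where the theorem actually lives. The necessity direction requires choosing a concrete test labeling (zero on $V(G')$, arbitrary on $v$ in your normal form), showing that any admissible multiplicity $a$ for toggling $v$ satisfies $r\mid a$ because $A(G')\mathbf{y}=a\mathbf{1}_W$ forces $T^{A(G')}_{W}(-a)\neq\emptyset$, computing the achievable $W$-counts for $\textbf{0}_{W,rx}$ as $xt'+T^{A(G')}_{W}(0)$ with $t'=t+2r$ via Lemma~\ref{noU}(\ref{togsame}), and eliminating to reach $(r+t)x\equiv z+q$; the sufficiency direction requires exhibiting explicit toggle multiplicities and verifying that the label of $v$ closes to $0$. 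None of this is automatic, and it occupies most of the paper's argument. Second, your structural claim that ``shifting the $U$-values and the $W$-values independently affects $A(G)$-winnability the same way as shifting the whole labeling'' is false as stated: modulo the column space of $A(G)$ one has $\mathbf{1}_W\equiv\mathbf{1}$ but $\mathbf{1}_U\equiv -e_p$, since it is the combination $\mathbf{1}_U+e_p$ (the $v$-column of $A(G)$), not $\mathbf{1}_U$ itself, that lies in the column space. Your argument survives because the $U$-shift you actually use is always paired with an equal shift of the $p$-label, but condition (\ref{allswinnable}) is recovered only after making that pairing explicit, which is exactly why the paper feeds the labeling $\pi(w)-\pi(p)$ on $U$ into the reduced game.
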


\begin{proof}
For (\ref{dompen}), we first assume $G$ is $A(G)$-AW.  Let $G'$ have an arbitrary labeling.  We extend this labeling to a labeling of $G$ by giving each of $p$ and $v$ a label of 0.  This labeling of $G$ is winnable since $G$ is $A(G)$-AW, so we toggle the vertices of $G'$ as we would in a winning toggling of $G$.  If not every label of $G'$ becomes 0, then we have to toggle $v$ to give $G$ a zero labeling.  However, this leaves $p$ with a nonzero label.  Since $v$ is the only neighbor of $p$, this implies that toggling $v$ makes the zero labeling on $G$ impossible.  Thus, the toggles we did for $G'$ leaves all vertices in $G'$ with label 0, and so $G'$ is $A(G')$-AW.

If we assume $G'$ is $A(G')$-AW and let $G$ have an arbitrary labeling, we first toggle $v$ so that $p$ has label 0.  The resulting labeling restricted to $G'$ is winnable since $G'$ is $A(G')$-AW.  We can then toggle the vertices of $G'$ so that all vertices of $G'$ have label 0.  This leaves all vertices with label 0, except perhaps $v$ since $v$ is the only vertex not in $G'$ that is adjacent to a vertex in $G'$.  We then toggle $p$ until $v$ has label 0, which wins that game.  Thus, $G$ is $A(G)$-AW.

For (\ref{penneighbor}), let $U=N_G(v)-\{p\}$.  Then $N(\overline{G})$ looks like the following.
\[
\begin{array}{c|c|c|c|c|}
& V(G')-U & U & v & p \\ \hline
V(G')-U & N(\overline{G'-U}) & * & 1 & 1 \\ \hline
U & * & N(\overline{U}) & 0 & 1 \\ \hline
v & 1 & 0 & 1 & 0 \\ \hline
p & 1 & 1 & 0 & 1 \\ \hline
\end{array}
\]
where $G'-U$ is the induced subgraph with vertex set $V(G')-U$ and the $*$ blocks are the entries that make the four top-left blocks $N(\overline{G'})$.  We multiply each row except the last by the unit $-1$ and then add to each of those rows the last row to get
\[
M=\begin{array}{c|c|c|c|c|}
& V(G')-U & U & v & p \\ \hline
V(G')-U & A(G'-U) & \overline{*} & -1 & 0 \\ \hline
U & \overline{*} & A(U) & 0 & 0 \\ \hline
v & 0 & 1 & -1 & 1 \\ \hline
p & 1 & 1 & 0 & 1 \\ \hline
\end{array}
\]
where the $\overline{*}$ blocks are obtained from $*$ by changing the 1 entries to 0 and the 0 entries to 1.  
This makes the top-left four blocks $A(G')$.  So the $M$-Lights Out game is played as the $A(G')$-Lights Out game on $V(G')$; toggling any vertex in $V(G')$ adds 1 to the label of $p$; toggling any vertex in $U$ adds 1 to the label of $v$; toggling $v$ adds $-1$ to every vertex in $(V(G)-U) \cup \{v\}$; and toggling $p$ adds 1 to $v$ and $p$.

We first assume $\overline{G}$ is $N(\overline{G})$-AW.  Since $M$ is row equivalent to $N(\overline{G})$, $\overline{G}$ is $M$-AW.  To prove (\ref{allswinnable}), consider the labeling giving $v$ and $p$ labels of 0, each $w \in U$ a label of $\pi(w)-\pi(p)$, and each $w \in V(G')-U$ a label of $\pi(w)$, which is $M$-winnable by assumption.  If we toggle $v$ and $p$ as part of a winning toggling, we get the following labeling of $V(G')$.
\[
\lambda(w) = \left\{ \begin{matrix} \pi(w)-\pi(p) & w \in U \\ \pi(w)+s & \hbox{otherwise} \end{matrix} \right.
\]
where $v$ is toggled $-s$ times.  We claim that $\pi_s$ is $A(G)$-winnable.  If we define $\pi_s'$ similarly as $\pi'$ in Lemma~\ref{noU}, then $\pi_s' = \lambda$, which we showed to be $A(G')$-winnable.  By Lemma~\ref{noU}(\ref{ptop'}), $\pi_s$ is winnable.

For (\ref{nullshit}), let $z \in \mathbb{Z}_\ell$, and consider the labeling where $p$ has label $-z$ and all other labels are 0.  This labeling is $M$-winnable by assumption, so let $y_1$ be the number of times $v$ is toggled and $y_2$ be the number of times $p$ is toggled in order to win the $M$-Lights Out game with this labeling.  This results in each vertex of $V(G')-U$ having label $-y_1$, each vertex of $U$ having label 0, $v$ having label $y_2-y_1$, and $p$ having label $-z+y_2$.

At this point, we have only the vertices in $V(G')$ to toggle, which means the remaining toggles necessary to win the $M$-Lights Out game will also win the $A(G')$-Lights Out game with labeling $\textbf{0}_{V(G')-U,-y_1}$.  Thus, $T_{V(G')-U}^{A(G')}(-y_1) \ne \emptyset$.  By Lemma~\ref{noU}(\ref{togsame}), $T_{V(G)}^{A(G)}(-y_1) \ne \emptyset$, and so $-y_1=rx$ for some $x \in \mathbb{Z}$ by Lemma~\ref{basictoggling}.  By assumption, $t \in T_{V(G)}^{A(G)}(r)$, and so $t=t'-2r$ for some $t' \in T_{V(G')-U}^{A(G')}(r)$ by Lemma~\ref{noU}(\ref{togsame}).
Thus, there exists $t_U \in \mathbb{Z}$ such that we can collectively toggle the vertices of $U$ $t_U$ times and the vertices of $V(G')-U$ $t'$ times to win the $A(G')$-Lights Out Game with labeling $\textbf{0}_{V(G')-U,r}$.  By repeating $x$ times the toggles we use for the labeling $\textbf{0}_{V(G')-U,r}$, we can toggle the vertices of $U$ and $V(G')-U$ $xt_U$ and $xt'$ times, respectively, to win the $A(G')$-Lights Out Game with labeling $\textbf{0}_{V(G')-U,rx}$.  Since toggling the vertices of $G'$ to win the $M$-Lights Out game also must win the adjacency game on $G'$ with labeling $\textbf{0}_{V(G')-U,rx}$, we must toggle the vertices of $G'$ $xt_U+xt'+k$ for some $k \in T_{V(G')}^{A(G')}(0)$.  If we let $k=q_1+q_2$, where $q_1$ is the number of toggles from $V(G')-U$ and $q_2$ is the number of toggles from $U$ in the null toggle, we have $q_1 \in T_{V(G')-U}^{A(G')}(0)$.  Note that by negating all toggles in this null toggle, we still get a null toggle, and so $-q_1 \in T_{V(G')-U}^{A(G')}(0)$.  This leaves all vertices in $V(G')$ with label 0, $v$ with label $y_2+xr+xt_U+q_2$ (by setting $-y_1=xr$), and $p$ with label $-z+y_2+xt_U+xt'+q_1+q_2$.

All of the toggles have been accounted for, and so the labels of $v$ and $p$ must be 0.  We then eliminate $y_2$ in the resulting system of equations to get $(r-t')x=-z+q_1$.  Recall that $t=t'-2r$, and so $t'=t+2r$.  This gives us $(-r-t)x=-z+q_1$, and so $(r+t)x=z-q_1$.  Now let $q=-q_1$.  As noted above, $q \in T_{V(G')-U}^{A(G')}(0)$.  By Lemma~\ref{noU}(\ref{togsame}), $T_{V(G')-U}^{A(G')}(0)=T_{V(G)}^{A(G)}(0)$, and so $q \in T_{V(G)}^{A(G)}(0)$.  Since $(r+t)x=z+q$, this proves (\ref{nullshit}).

Now we assume that (\ref{allswinnable}) and (\ref{nullshit}) hold, and we prove that $\overline{G}$ is $N(\overline{G})$-AW.  Since $M$ is row equivalent to $N(\overline{G})$, we need only prove that $\overline{G}$ is $M$-AW.  Let $\pi$ be a labeling of $V(\overline{G})$.  Consider the labeling $\lambda$ of $V(G)$ that is 0 on $p$ and $v$ and $\pi$ on $V(G')$.  By (\ref{allswinnable}), $\lambda_s$ is $(A(G)$-winnable for some $s \in \mathbb{Z}_\ell$.  If we define $\lambda_s'$ as in Lemma~\ref{noU}(\ref{ptop'}), we get $\lambda_s'=\pi_{V(G')-U,s} |_{V(G')}$.  By Lemma~\ref{noU}(\ref{ptop'}), $\pi_{V(G')-U,s} |_{V(G')}$ is $A(G')$-winnable. Then $v$ can be toggled in the $M$-Lights Out game $-s$ times to obtain $\pi_{V(G')-U,s} |_{V(G')}$ on $V(G')$, and we can then toggle the vertices of $V(G')$ to give every vertex in $V(G')$ a label of 0.  This leaves $v$ with label $a$ and $p$ with label $b$ for some $a,b \in \mathbb{Z}_\ell$.  

By Lemma~\ref{noU}(\ref{togsame}), $t=t'-2r$ for some $t' \in T_{V(G')-U}^{A(G')}(r)$. Thus, there exists $t_U \in \mathbb{Z}_\ell$ such that we can toggle the vertices of $V(G')-U$ $t'$ times and the vertices of $U$ $t_U$ times to win the $A(G')$-Lights Out game with labeling $\textbf{0}_{V(G')-U,r}$.  Lemma~\ref{noU}(\ref{togsame}) implies that $T_{V(G)}^{A(G)}(0)=T_{V(G')-U}^{A(G')}(0)$, and so $q \in T_{V(G')-U}^{A(G')}(0)$.  As we reasoned above, $-q \in T_{V(G')-U}^{A(G')}(0)$, and so there exists $q_U \in T_U^A(0)$ such that $q_U-q \in T_{V(G')}^{A(G')}(0)$.  Now let $x$ be a solution to (\ref{nullshit}), where $z=a-b$.  This gives us $(r-t')x=b-a-q$.  If $v$ is toggled $-xr$ times and $p$ is toggled $-b-x(t_U+t')+(q-q_U)$ times, this leaves each vertex of $V(G')-U$ with label $xr$, each vertex of $U$ with label 0, $v$ with label $a-b+xr-x(t_U+t')+(q-q_U)$, and $p$ with label $-x(t_U+t')+(q-q_U)$.  As we reasoned above, we can then win the $A(G')$-Lights Out game with labeling $\textbf{0}_{V(G')-U,xr}$ (and thus make the labels of $V(G')$ to be 0) by toggling the vertices of $U$ $xt_U$ times and the vertices of $V(G')-U$ a total of $xt'$ times.  The vertices of $V(G')$ can be toggled in such a way that the vertices of $U$ are toggled $q_U$ times, the vertices of $V(G')-U$ are toggled $-q$ times, and these toggles collectively have no effect on the labels of $V(G')$.  So we combine these to toggle the vertices of $U$ collectively $xt_U+q_U$ times and the vertices of $V(G')-U$ collectively $xt'-q$ times.  This leaves the vertices of $V(G')$ with label 0, $p$ with label $(-x[t_U+t']+[q-q_U])+(xt_U+q_U+xt'-q)=0$, and $v$ with label 
\begin{align*}
a-b+xr-x(t_U+t')+(q-q_U)+xt_U+q_U &= a-b+x(r-t')+q \\ &= a-b+(b-a-q)+q=0
\end{align*}
This wins the game and shows that $\overline{G}$ is $N(\overline{G})$-AW.
\end{proof}

In Theorem~\ref{pendantremove}(\ref{penneighbor}), if $G$ is $A$-AW, then $\pi_s$ is automatically $A(G)$-winnable for all $s \in \mathbb{Z}_\ell$.  Thus, $G$ satisfies Theorem~\ref{pendantremove}(\ref{allswinnable}) and makes $r=1$.  Furthermore, $A(G)$ is invertible, so the only null toggle possible is where no buttons are pushed, making $T_{V(G)}^{A(G)}(0)=\{0\}$.  This gives us the following.

\begin{cor}  \label{subsetjoinaw} Let $G$ be an $A$-AW graph with a pendant vertex. Let $t \in T_{V(G)}^{A(G)}(1)$.  Then $\overline{G}$ is $N$-AW if and only if $\gcd(1+t,\ell)=1$. \end{cor}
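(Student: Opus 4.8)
The plan is to specialize Theorem~\ref{pendantremove}(\ref{penneighbor}) to the situation where $G$ is $A$-AW, exactly as the paragraph preceding the statement suggests. First I would record the consequences of $A$-AW-ness that trivialize the hypotheses of Theorem~\ref{pendantremove}(\ref{penneighbor}). By Lemma~\ref{matrixLO}(\ref{matrixLOinv}), $G$ being $A$-AW means $A(G)$ is invertible over $\mathbb{Z}_\ell$, and invertibility yields three things: (i) every labeling of $V(G)$ is $A(G)$-winnable, so in particular $\pi_s$ is $A(G)$-winnable for every labeling $\pi$ and every $s \in \mathbb{Z}_\ell$, which is precisely condition~(\ref{allswinnable}); (ii) the labeling $\textbf{0}_{V(G),1}$ is $A(G)$-winnable, so $T_{V(G)}^{A(G)}(1) \ne \emptyset$, whence the minimal $r \in \mathbb{N}$ with $T_{V(G)}^{A(G)}(r) \ne \emptyset$ is $r = 1$; (iii) the only solution of $A(G)\textbf{x} = \textbf{0}$ is $\textbf{x} = \textbf{0}$, so the only null toggle is the empty one and $T_{V(G)}^{A(G)}(0) = \{0\}$. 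Point (iii), combined with the observation in the paragraph defining null toggles (two toggle-counts with the same effect on labels differ by an element of $T_{V(G)}^{A(G)}(0)$), also shows that any $t \in T_{V(G)}^{A(G)}(1)$ is uniquely determined modulo $\ell$, so $\gcd(1+t,\ell)$ is a well-defined quantity and the statement makes sense.

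Next I would feed these facts into Theorem~\ref{pendantremove}(\ref{penneighbor}) with $r = 1$ and the given $t \in T_{V(G)}^{A(G)}(1)$. Condition~(\ref{allswinnable}) holds by (i). Condition~(\ref{nullshit}) asks that for each $z \in \mathbb{Z}_\ell$ there is some $q \in T_{V(G)}^{A(G)}(0)$ for which $(r+t)x \equiv z+q \pmod{\ell}$ is solvable; by (iii) the only available $q$ is $0$, so this says exactly that $(1+t)x \equiv z \pmod{\ell}$ is solvable for every $z \in \mathbb{Z}_\ell$, i.e.\ that multiplication by $1+t$ is surjective on $\mathbb{Z}_\ell$. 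By elementary number theory (equivalently, since the $1 \times 1$ matrix $[1+t]$ over $\mathbb{Z}_\ell$ is invertible iff its determinant is a unit, as noted after Corollary~\ref{twins}), this surjectivity holds if and only if $\gcd(1+t,\ell) = 1$. Hence Theorem~\ref{pendantremove}(\ref{penneighbor}) gives that $\overline{G}$ is $N$-AW if and only if $\gcd(1+t,\ell) = 1$, which is the claim.

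There is essentially no hard step here: all of the real work is in Theorem~\ref{pendantremove}, and the corollary is a bookkeeping exercise in verifying that the $A$-AW hypothesis forces $r = 1$, trivializes the null-toggle set to $\{0\}$, and thereby reduces condition~(\ref{nullshit}) to the single coprimality statement. The one point worth a careful sentence is the well-definedness of $\gcd(1+t,\ell)$ — that the particular choice of $t \in T_{V(G)}^{A(G)}(1)$ is irrelevant — which follows from $T_{V(G)}^{A(G)}(0) = \{0\}$ as noted above.
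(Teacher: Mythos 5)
Your proposal is correct and follows essentially the same route as the paper: both arguments observe that $A$-AW-ness (i.e.\ invertibility of $A(G)$) makes condition~(\ref{allswinnable}) automatic, forces $r=1$, and collapses $T_{V(G)}^{A(G)}(0)$ to $\{0\}$, so that condition~(\ref{nullshit}) reduces to solvability of $(1+t)x \equiv z \pmod{\ell}$ for all $z$, i.e.\ $\gcd(1+t,\ell)=1$. Your added remark on the well-definedness of $t$ modulo $\ell$ is a small bonus the paper leaves implicit, but it does not change the approach.
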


\begin{proof}
Since $G$ is $A$-AW, part (\ref{allswinnable}) of Theorem~\ref{pendantremove} is automatically satisfied.  Moreover, since $G$ is $A$-AW, $A$ is invertible, which implies that $T_{V(G)}^{A(G)}(0) = \{0\}$.  The result then follows directly from Theorem~\ref{pendantremove}.
\end{proof}

Furthermore, for possible $(n,\ell)$-extremal graphs with a dominating vertex, Theorem~\ref{pendantremove}(\ref{dompen}) gives us a way to eliminate most graphs with pendant vertices.

\begin{cor} \label{extdom}
Let $G$ be a graph with a dominating vertex.  If $\overline{G}$ has a pendant vertex that is not part of a component isomorphic to $P_2$, then $G$ is not $(n,\ell)$-extremal for any $n$ and $\ell$.
\end{cor}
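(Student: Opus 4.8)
The plan is to exploit that a dominating vertex of $G$ corresponds to an isolated vertex of $\overline{G}$, so that Theorem~\ref{subsetjoindom} converts winnability of $G$ into winnability of an adjacency game on a sparse graph, and then to use Theorem~\ref{pendantremove}(\ref{dompen}) to shrink the component of the offending pendant vertex while keeping winnability, thereby producing a strictly denser winnable graph on the same number of vertices.

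First I would fix notation. Since $G$ has a dominating vertex $u$, the vertex $u$ is isolated in $\overline{G}$, so I write $\overline{G} = H \cup K_1$ with $V(K_1) = \{u\}$. Let $p$ be the given pendant vertex of $\overline{G}$ and $v$ its unique neighbor. Because $u$ is isolated, both $p$ and $v$ lie in $H$, and the component of $p$ in $H$ coincides with its component in $\overline{G}$; by hypothesis this component is not $P_2$, and since a connected graph containing a pendant vertex has at least two vertices while the only such graph on two vertices is $P_2$, this component has at least three vertices. Hence $v$ has a neighbor in $H$ other than $p$, i.e. $\deg_H(v) \ge 2$. If $G$ is not $N$-AW then it is not $(n,\ell)$-extremal and there is nothing to prove, so I may assume $G$ is $N$-AW; then $H$ is $A$-AW by Theorem~\ref{subsetjoindom}.

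Next I would build the denser graph. Let $H^{\ast}$ be obtained from $H$ by deleting every edge incident with $v$ except $vp$. Then $\{p,v\}$ induces a $P_2$-component of $H^{\ast}$, the subgraph of $H^{\ast}$ induced on $V(H) \setminus \{p,v\}$ equals the subgraph of $H$ induced on $V(H)\setminus\{p,v\}$ (call it $H'$), and $|E(H^{\ast})| < |E(H)|$ since $\deg_H(v) \ge 2$. Applying Theorem~\ref{pendantremove}(\ref{dompen}) to $H$ (pendant vertex $p$, neighbor $v$) and to $H^{\ast}$ (again pendant vertex $p$, neighbor $v$, and the \emph{same} induced graph $H'$) yields that $H$ is $A$-AW iff $H'$ is $A$-AW iff $H^{\ast}$ is $A$-AW, so $H^{\ast}$ is $A$-AW. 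Then Theorem~\ref{subsetjoindom} applied to $H^{\ast}$ gives that $\overline{H^{\ast} \cup K_1}$ is $N$-AW. This graph has the same $n$ vertices as $\overline{H \cup K_1} = G$ but strictly more edges, so $G$ is not of maximum size among winnable $n$-vertex graphs; that is, $G$ is not $(n,\ell)$-extremal, for any $n$ and $\ell$.

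I expect no genuine obstacle here. The only points requiring care are the structural claims that the component of $p$ in $\overline{G}$ is exactly a component of $H$ having at least three vertices (so that $\deg_H(v) \ge 2$ and $H^{\ast}$ is genuinely sparser), and that deleting the edges at $v$ leaves the induced subgraph $H'$ untouched, so that Theorem~\ref{pendantremove}(\ref{dompen}) applies verbatim to both $H$ and $H^{\ast}$ with the same $G'$.
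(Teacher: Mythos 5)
Your proposal is correct and follows essentially the same route as the paper: both delete the edges at $v$ other than $vp$ to create a strictly denser complement whose complement-graph is $H' \cup P_2 \cup K_1$, and both establish its winnability via Theorem~\ref{subsetjoindom} together with Theorem~\ref{pendantremove}(\ref{dompen}). The only cosmetic difference is that the paper verifies $H' \cup P_2$ is $A$-AW by noting $P_2$ is $A$-AW and arguing componentwise, whereas you apply Theorem~\ref{pendantremove}(\ref{dompen}) a second time to the modified graph; both are valid.
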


\begin{proof}
For contradiction, assume $G$ is $(n,\ell)$-extremal, that $\overline{G}$ has a pendant vertex $p$ with neighbor $v$, and that $p$ and $v$ are not the only vertices in their connected component of $\overline{G}$.  Thus, $v$ has a neighbor other than $p$ in $\overline{G}$.  Let $w$ be the dominating vertex in $G$, and let $G'$ be the subgraph of $\overline{G}$ induced by $V(\overline{G})-\{p,v,w\}$.  If we remove the edges in $\overline{G}$ incident to $v$ but not $p$, we get the graph $H=G' \cup P_2 \cup P_1$.  Note that $H$ has size smaller than $\overline{G}$, and so $\overline{H}$ has size greater than $G$.  To contradict the assumption that $G$ is $(n,\ell)$-extremal, it then suffices to prove that $\overline{H}$ is $N$-AW.

Since $G$ is $N$-AW, Theorem~\ref{subsetjoindom} implies that $\overline{G}-\{w\}$ is $A$-AW.  By Theorem~\ref{pendantremove}(\ref{dompen}), this implies that $G'$ is $A$-AW.  Since $P_2$ is $A$-AW for all $\ell$, it follows that $G' \cup P_2 = H-\{w\}$ is $A$-AW.  By Theorem~\ref{subsetjoindom}, $\overline{H}$ is $N$-AW. This means $G$ is not $(n,\ell)$-extremal, a contradiction.
\end{proof}

One nice property of pendant graphs is that it is really easy to play the $A$-Lights Out game on them. This is demonstrated in the following result.
Recall that  $H\astrosun K_1$ denotes the graph in which, for each vertex $v$ of $H$, we add a new vertex adjacent to only $v$.
\begin{lem} \label{pendwin} Let $H$ be a graph, and $G=H \astrosun K_1$.  Then
	\begin{enumerate}
	\item \label{pendwinall} $G$ is $A$-AW for all $\ell \in \mathbb{N}$
	\item \label{pendtoggen} If $G$ has size $m$ and order $n$, then $T_{V(G)}^{A(G)}(1)=\{ 2(m-n) \}$.
	\item \label{pendtreetog} If $G$ is a pendant forest with $c$ components, then $T_{V(G)}^{A(G)}(1)=\{ -2c \}$.
	\end{enumerate}
\end{lem}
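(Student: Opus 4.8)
The plan is to handle the three parts by exploiting the very rigid structure of a pendant graph: every vertex is either a pendant leaf or the unique neighbor of at least one leaf. Part (\ref{pendwinall}) follows from the pendant-vertex machinery already built: I would induct on the order of $H$ using Theorem~\ref{pendantremove}(\ref{dompen}). Pick any leaf $u$ of $H$ (if $H$ is edgeless the claim is trivial since $G$ is a disjoint union of $P_2$'s, each of which is $A$-AW), let $w$ be the pendant vertex of $G$ attached to $u$, and let $p$ be the pendant vertex attached to $w$'s... more carefully: in $G = H\astrosun K_1$, the leaf $u$ of $H$ has exactly two neighbors in $G$, namely its $H$-neighbor and its attached pendant. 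Actually the cleanest induction: let $p$ be the pendant attached to a leaf $u$ of $H$; then $p$ is a pendant vertex of $G$ with neighbor $u$, and $u$ itself has degree $2$ in $G$. Removing $\{p,u\}$ from $G$ leaves $(H-u)\astrosun K_1$ together with one extra isolated pendant (the one that had been attached to $u$'s $H$-neighbor is unaffected, but $u$'s $H$-neighbor $v$ loses a neighbor). Hmm — the induced subgraph on $V(G)-\{p,u\}$ is exactly $(H-u)\astrosun K_1$ since deleting $u$ from $H$ and its pendant $p$ removes precisely the pendant structure at $u$; the pendant still attached to $v$ remains. So by Theorem~\ref{pendantremove}(\ref{dompen}), $G$ is $A$-AW iff $(H-u)\astrosun K_1$ is, and induction closes this. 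The base case $H$ a single vertex gives $G=P_2$, which is $A$-AW.

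For parts (\ref{pendtoggen}) and (\ref{pendtreetog}), I would argue directly rather than inductively, since the $A$-Lights Out game on a pendant graph has an essentially unique solution for each starting labeling. Observe that in $G=H\astrosun K_1$, toggling a pendant vertex only affects its unique neighbor, and the only vertex affecting a pendant vertex $q$'s label is its neighbor $h\in V(H)$. So for the labeling $\textbf{0}_1$ (every vertex labeled $1$): each $h\in V(H)$ must be toggled to cancel the label of its pendant, i.e. the pendant $q_h$ forces $h$ to be toggled $x_h$ times with $x_h \equiv -(1 + (\text{contributions to }q_h\text{ from its neighbors}))$; but $q_h$'s only neighbor is $h$ and toggling $q_h$ doesn't affect $q_h$'s own label in the $A$-game, so $1 + x_h \equiv 0$, forcing $x_h = -1$ for every $h\in V(H)$. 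Then each pendant $q_h$ must be toggled to cancel its own label after the $H$-vertices act: $q_h$ started at $1$ and received $x_h=-1$ from $h$, so it is already at $0$ and needs $0$ toggles. Wait — but then the labels of the $H$-vertices themselves: $h$ started at $1$, receives $x_h = -1$ from... no, in the $A$-game toggling $h$ does not change $h$'s label. So $h$'s label is $1 + \sum_{h'\sim h, h'\in V(H)} x_{h'} + (\text{toggle count of }q_h) = 1 + \sum_{h'\sim_H h}(-1) + 0 = 1 - \deg_H(h)$. That is not zero in general! So the naive "toggle each $H$-vertex $-1$ time" is wrong — the pendants must absorb the discrepancy. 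Let me reset: let $q_h$ be toggled $y_h$ times. The label of $q_h$ is $1 + x_h$ (only $h$ affects it), forcing $x_h = -1$. The label of $h$ is $1 + \sum_{h'\sim_H h} x_{h'} + y_h = 1 - \deg_H(h) + y_h$, forcing $y_h = \deg_H(h) - 1$. Total toggles $= \sum_h x_h + \sum_h y_h = -n_H + \sum_h(\deg_H(h) - 1) = -n_H + 2|E(H)| - n_H = 2|E(H)| - 2n_H$ where $n_H = |V(H)|$. Now relate to $G$: $n = 2n_H$ and $m = |E(H)| + n_H$, so $2(m-n) = 2(|E(H)| + n_H - 2n_H) = 2|E(H)| - 2n_H$. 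This matches, proving (\ref{pendtoggen}). The toggling is forced, so the set is a singleton.

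For (\ref{pendtreetog}): if $G$ is a pendant forest with $c$ components, then $H$ is a forest with $c$ components, so $|E(H)| = n_H - c$, giving $T_{V(G)}^{A(G)}(1) = \{2|E(H)| - 2n_H\} = \{2(n_H - c) - 2n_H\} = \{-2c\}$. This is immediate from (\ref{pendtoggen}) once I note the edge count of a $c$-component forest. I expect the main obstacle to be purely bookkeeping: making the forcing argument in (\ref{pendtoggen}) airtight, i.e. carefully stating that the unique neighbor relation between each pendant and its $H$-vertex makes both $x_h$ and $y_h$ determined with no freedom, hence $T_{V(G)}^{A(G)}(1)$ is a genuine singleton and not merely nonempty. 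Everything else is arithmetic with $n$, $m$, $n_H$, $|E(H)|$, and $c$.
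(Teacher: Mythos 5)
Your proposal is correct in substance and arrives at the same numbers, but it takes a different route to part (\ref{pendwinall}) than the paper does. The paper proves (\ref{pendwinall}) by exhibiting an explicit two-phase winning strategy (toggle each $v\in V(H)$ until its pendant is zeroed, then toggle each pendant until its neighbor in $H$ is zeroed), and part (\ref{pendtoggen}) is then obtained by counting the toggles that strategy uses; your forcing argument ($x_h\equiv -1$ from the pendant's label equation, then $y_h\equiv \deg_H(h)-1$ from $h$'s label equation, summed via the handshake lemma) is the per-vertex version of the same computation, and it has the advantage of making the uniqueness of the solution --- hence that $T_{V(G)}^{A(G)}(1)$ is a genuine singleton --- completely explicit rather than deferring to the invertibility of $A(G)$. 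For part (\ref{pendwinall}) you instead induct using Theorem~\ref{pendantremove}(\ref{dompen}); this is legitimate (no circularity, since that theorem's proof does not use this lemma), and the key observation that deleting a vertex $u$ of $H$ together with its attached pendant leaves exactly $(H-u)\astrosun K_1$ is right. One slip: you insist on choosing $u$ to be a \emph{leaf} of $H$, but $H$ need not have a leaf (e.g.\ $H=C_3$, and $C_3\astrosun K_1$ actually appears in Proposition~\ref{prop:plus3}), so as written the induction step is unavailable for such $H$. The fix is one word: for \emph{any} $u\in V(H)$ the attached pendant $q_u$ is a pendant vertex of $G$ with unique neighbor $u$, and $G-\{q_u,u\}=(H-u)\astrosun K_1$, so the induction goes through verbatim. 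Alternatively, your forcing computation from part (\ref{pendtoggen}), run with an arbitrary initial labeling in place of $\textbf{0}_1$, already proves (\ref{pendwinall}) directly and would let you dispense with the induction entirely. Parts (\ref{pendtoggen}) and (\ref{pendtreetog}) are correct as written.
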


\begin{proof}
For (\ref{pendwinall}), we have the following algorithm for winning any $A$-Lights Out game on $G$.  Toggle each vertex in $V(H)$ until its pendant neighbor has label 0.  Then toggle each vertex not in $V(H)$ until its neighbor has label 0.  This results in the zero labeling, which makes $G$ $A$-AW.

For (\ref{pendtoggen}), we begin with the labeling $\textbf{0}_{1}$.  Applying the above strategy, each vertex in $V(H)$ is toggled $-1$ times, giving a total of $-\frac{n}{2}$ toggles.  Each vertex toggled also decreases by 1 the label of each adjacent vertex in $H$.  Collectively, this decreases the labels of $V(H)$ by $2|E(H)| =2 \left( m- \frac{n}{2} \right)$.  That means that when we toggle the pendant vertices, we must toggle $-1$ each for the initial label of 1 for each vertex in $H$ plus $\left( m- \frac{n}{2} \right)$ for the decrease in labels from toggling $V(H)$.  In total, we get $-\frac{n}{2} - \frac{n}{2} + 2 \left( m- \frac{n}{2} \right) = 2(m-n)$, which completes the proof.

Finally, (\ref{pendtreetog}) follows from the fact that for a forest, we have $m=n-c$.
\end{proof}

%
%

We can now determine the $N$-winnability of the complements of pendant graphs.
Interestingly, the issue of whether or not a pendant graph is $N$-AW depends entirely on the size and order of the pendant graph.

\begin{lem} \label{pendantN} Let $G$ be a pendant graph of size $m$ and order $n$.  Then $\overline{G}$ is $N$-AW if and only if $\gcd(2[n-m]-1,\ell)=1$. Equivalently, if $G$ is a graph of even order $n$ and size $\binom{n}{2} - \left(\frac{n}{2} +k\right)$ such that $\overline{G}$ is a pendant graph then $G$ is $N$-AW if and only if $\gcd(n-2k-1,\ell)=1$.
\end{lem}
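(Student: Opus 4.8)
The plan is to recognize that every pendant graph is exactly the kind of graph to which Corollary~\ref{subsetjoinaw} applies, so the statement falls out of Lemma~\ref{pendwin} together with that corollary, modulo a sign flip and some bookkeeping.

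First I would write $G = H \astrosun K_1$ for the appropriate graph $H$. If $H$ has no vertices the claim is vacuous ($\overline{G}$ is the empty graph, which is trivially $N$-AW, and $n=m=0$ gives $\gcd(-1,\ell)=1$), so I may assume $H$ is nonempty, in which case the newly added vertices are genuine pendant vertices and $G$ has a pendant vertex. By Lemma~\ref{pendwin}(\ref{pendwinall}), $G$ is $A$-AW, and by Lemma~\ref{pendwin}(\ref{pendtoggen}), $T_{V(G)}^{A(G)}(1) = \{2(m-n)\}$. Hence $G$ is an $A$-AW graph with a pendant vertex and a known toggling number, so Corollary~\ref{subsetjoinaw} applies with $t = 2(m-n)$: $\overline{G}$ is $N$-AW if and only if $\gcd(1 + 2(m-n), \ell) = 1$. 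Since $1 + 2(m-n) = -\bigl(2(n-m) - 1\bigr)$ and the gcd is unchanged by negation, this is exactly the condition $\gcd(2[n-m]-1,\ell) = 1$, giving the first equivalence.

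For the reformulation, suppose $G$ is a graph of even order $n$ and size $\binom{n}{2} - \left(\frac{n}{2}+k\right)$ whose complement $\overline{G}$ is a pendant graph. Then $\overline{G}$ has order $n$ and size $m = \binom{n}{2} - \left[\binom{n}{2} - \left(\frac{n}{2}+k\right)\right] = \frac{n}{2}+k$. Applying the first part to the pendant graph $\overline{G}$ (whose complement is $G$): $G$ is $N$-AW if and only if $\gcd\bigl(2[n - m] - 1, \ell\bigr) = 1$, and substituting $m = \frac{n}{2}+k$ gives $2\left(\frac{n}{2} - k\right) - 1 = n - 2k - 1$, so the condition becomes $\gcd(n-2k-1,\ell)=1$, as claimed.

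I do not expect a genuine obstacle here: all of the substantive work — showing pendant graphs are $A$-AW, computing their toggling number, and reducing $N$-winnability of a dense complement to an adjacency-game toggling computation on the sparse graph — has already been done in Lemma~\ref{pendwin}, Theorem~\ref{pendantremove}, and Corollary~\ref{subsetjoinaw}. The only points requiring care are the trivial degenerate case, the sign reconciliation $1 + 2(m-n) = -(2(n-m)-1)$, and correctly passing between "$G$" and "$\overline{G}$" when translating into the $(n,\ell)$-extremal language of the second sentence.
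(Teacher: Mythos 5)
Your proposal is correct and follows essentially the same route as the paper's own proof: invoke Lemma~\ref{pendwin} for $A$-AW-ness and the toggling number $2(m-n)$, feed these into Corollary~\ref{subsetjoinaw}, and handle the sign and the substitution $m=\tfrac{n}{2}+k$ for the reformulation. Your extra attention to the degenerate case and to the presence of a pendant vertex (needed to invoke Corollary~\ref{subsetjoinaw}) is harmless bookkeeping that the paper leaves implicit.
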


\begin{proof}
By Lemma~\ref{pendwin}(\ref{pendwinall}), $G$ is $A$-AW, and so we can apply Corollary~\ref{subsetjoinaw}.  By Lemma~\ref{pendwin}(\ref{pendtoggen}), $T_{V(G)}^{A(G)}(1) = \{ 2(m-n)\}$.  By Corollary~\ref{subsetjoinaw}, $\overline{G}$ is $N$-AW if and only if $\gcd(2(m-n)+1,\ell)=1$.  The second part follows from substituting $m=\frac{n}{2}+k$ to get $2(m-n)+1=-(n-2k-1)$.
\end{proof}

If $\overline{G}$ is a forest, then $n-m$ is the number of components of $\overline{G}$.  This along with Lemma~\ref{pendantN} gives us the following.

\begin{cor} \label{2c-1} Let $G$ be a graph such that the components of $\overline{G}$ are all pendant trees.  If  $c$ is the number of components of $\overline{G}$, then $G$ is $N$-AW if and only if $\gcd(2c-1,\ell)=1$. \end{cor}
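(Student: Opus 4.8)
The plan is to deduce this directly from Lemma~\ref{pendantN}, together with the elementary fact that a forest on $n$ vertices with $c$ components has exactly $n-c$ edges. In other words, no new game-theoretic work is needed: the corollary is just a specialization of Lemma~\ref{pendantN} once the right graph is identified as the ``pendant graph'' in its hypothesis.

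First I would observe that the hypothesis ``every component of $\overline{G}$ is a pendant tree'' says precisely that $\overline{G}$ is a pendant forest. Indeed, if each component of $\overline{G}$ has the form $T_i \astrosun K_1$ for some tree $T_i$, then $\overline{G} = \bigcup_i (T_i \astrosun K_1) = \left( \bigcup_i T_i \right) \astrosun K_1$, which is a pendant graph whose underlying graph $\bigcup_i T_i$ is a forest. In particular $\overline{G}$ is a pendant graph, so Lemma~\ref{pendantN} applies to it.

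Next I would record the relevant parameters. Let $\overline{G}$ have order $n$ and size $m$. Since $\overline{G}$ is a forest with $c$ components, $m = n - c$, and hence $n - m = c$. Applying Lemma~\ref{pendantN} to the pendant graph $\overline{G}$, whose complement is $G$, then gives that $G$ is $N$-AW if and only if $\gcd(2[n-m]-1,\ell) = \gcd(2c-1,\ell) = 1$, which is exactly the claim.

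I do not anticipate any genuine obstacle; the only point requiring care is the bookkeeping of which graph plays the role of the ``pendant graph'' in Lemma~\ref{pendantN} — it is $\overline{G}$, not $G$ — and correspondingly that $m$ here denotes the size of $\overline{G}$ rather than the size of $G$. Once the identification $\overline{G} = H \astrosun K_1$ is made and the equality $n - m = c$ is noted, the conclusion follows immediately from Lemma~\ref{pendantN}.
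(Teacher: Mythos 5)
Your proposal is correct and is essentially identical to the paper's own (one-line) justification: the paper simply notes that since $\overline{G}$ is a forest, $n-m=c$, and then invokes Lemma~\ref{pendantN}. Your additional care in checking that a disjoint union of pendant trees is itself a pendant graph, and that the roles of $G$ and $\overline{G}$ in Lemma~\ref{pendantN} are kept straight, is exactly the right bookkeeping.
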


When we are classifying $(n,\ell)$-extremal graphs in Section \ref{Extremal Graphs}, it will be helpful to replace connected components of the complement of one graph with another graph without affecting the $N$-winnability of the original graph.  The following guarantees that the conditions of Theorem~\ref{pendantremove}(\ref{penneighbor}) are unaffected by the replacement.

%

\begin{cor} \label{extswitch}
Let $G$ be a graph with a pendant vertex, and let $C$ be a connected component of $G$ that is $A$-AW.  If there exists a graph $C'$ such that
\begin{enumerate}
    \item $C'$ is $A$-AW.
    \item $T_{V(C')}^{A(C')}(1)=T_{V(C)}^{A(C)}(1)$
    \item $C$ and $C'$ have the same order.
    \item $C'$ has smaller size than $C$.
\end{enumerate}
Then $\overline{G}$ is not $(n,\ell)$-extremal.
\end{cor}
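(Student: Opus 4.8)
The plan is to argue by contradiction. Suppose $\overline{G}$ is $(n,\ell)$-extremal; as an extremal graph it is in particular $N$-AW. Let $H$ be the graph obtained from $G$ by deleting the component $C$ and putting a disjoint copy of $C'$ in its place. By hypotheses (3) and (4), $H$ has the same order $n$ as $G$ but strictly fewer edges, so $\overline{H}$ is an $n$-vertex graph with strictly more edges than $\overline{G}$. Hence, to contradict extremality, it suffices to prove that $\overline{H}$ is $N$-AW. I would do this by invoking Theorem~\ref{pendantremove}(\ref{penneighbor}) for $H$: first I note $H$ still has a pendant vertex (immediate if $G$ has a pendant vertex outside $C$, which is the relevant situation; the case in which $C$ contains all pendant vertices of $G$ would be handled separately), and then I show that conditions (\ref{allswinnable}) and (\ref{nullshit}) of that theorem are the same assertions for $H$ as for $G$. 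Since $\overline{G}$ is $N$-AW and $G$ has a pendant vertex, Theorem~\ref{pendantremove}(\ref{penneighbor}) already guarantees (\ref{allswinnable}) and (\ref{nullshit}) for $G$; once we know they are unchanged, they hold for $H$, and the theorem gives that $\overline{H}$ is $N$-AW.

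The core of the argument is that these two conditions only see $C$ through data that $C$ and $C'$ share. Write $D$ for the union of the components of $G$ other than $C$, so $G=C\cup D$ and $H=C'\cup D$. Since $A$-winnability and the toggling-number sets decompose over connected components (as recorded in Section~\ref{Linear Algebra}), we have $T^{A(G)}_{V(G)}(r)=T^{A(C)}_{V(C)}(r)+T^{A(D)}_{V(D)}(r)$ and $T^{A(H)}_{V(H)}(r)=T^{A(C')}_{V(C')}(r)+T^{A(D)}_{V(D)}(r)$, where $+$ denotes the Minkowski sum of subsets of $\mathbb{Z}_\ell$. Because $C$ is $A$-AW, $A(C)$ is invertible, so $T^{A(C)}_{V(C)}(0)=\{0\}$ and, for each $r$, $T^{A(C)}_{V(C)}(r)$ is the singleton $\{rt_C\}$ where $\{t_C\}=T^{A(C)}_{V(C)}(1)$ (repeat the unique winning toggle for $\textbf{0}_{1}$ exactly $r$ times); the same holds for $C'$, and hypothesis (2) says $t_{C'}=t_C$. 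Therefore $T^{A(G)}_{V(G)}(r)=T^{A(H)}_{V(H)}(r)$ for every $r\ge 0$. In particular, the minimal $r$ with nonempty toggling set, an admissible value of $t$, and the null-toggle set $T^{A}_{\bullet}(0)$ all agree for $G$ and $H$, so condition (\ref{nullshit}) is verbatim identical for the two graphs. For condition (\ref{allswinnable}): a shifted labeling is $A$-winnable on a disjoint union exactly when its restriction to each summand is; since $C$ and $C'$ are $A$-AW, the restriction to $V(C)$ (resp.\ $V(C')$) is winnable for every shift, so (\ref{allswinnable}) for $G$ and for $H$ both reduce to the single statement ``every labeling of $V(D)$ becomes $A(D)$-winnable after some constant shift.'' This establishes the transfer.

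The step I expect to be the main obstacle is this transfer itself: it is conceptually simple but needs careful bookkeeping with the component decompositions quoted in Section~\ref{Linear Algebra} to be sure that conditions (\ref{allswinnable}) and (\ref{nullshit}) really do coincide for $G$ and $H$ --- in particular that the same pair $(r,t)$ may be used in Theorem~\ref{pendantremove}(\ref{penneighbor}) for both graphs, and that the null-toggle set is genuinely unchanged by the swap. The only other point needing attention is verifying that $H$ has a pendant vertex, which is routine whenever $G$ has one outside $C$.
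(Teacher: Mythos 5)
Your proof is correct and follows essentially the same route as the paper: swap $C$ for $C'$, observe that conditions (\ref{allswinnable}) and (\ref{nullshit}) of Theorem~\ref{pendantremove}(\ref{penneighbor}) see $C$ only through its being $A$-AW and through its toggling numbers, and conclude that the larger graph $\overline{H}$ is still $N$-AW, contradicting extremality. Your explicit Minkowski-sum computation of the toggling sets is precisely the detail the paper compresses into ``a similar argument,'' and the caveat you flag about $H$ needing to retain a pendant vertex is one the paper also relies on silently.
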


\begin{proof}
Let $G'$ be the graph identical to $G$ except that the component $C$ is replaced with $C'$.  The winnability of the adjacency game is determined by the winnability of the adjacency game on each connected component of a given graph. Since both $C$ and $C'$ are $A$-AW, then given a labeling $\pi$ on $G$ (resp. a labeling $\pi'$ on $G'$), we have that $\pi_s$ is $A(G)$-winnable (resp. $\pi'_s$ is $A(G')$-winnable) if and only if $\pi_s$ restricted to $G-C$ (resp. $\pi'_s$ restricted to $G'-C'$) is $A(G-C)$-winnable (resp. $A(G'-C')$-winnable).  Since $G-C=G'-C'$, this condition is identical for both $G$ and $G'$.  Thus, either both of $G$ and $G'$ satisfy Theorem~\ref{pendantremove}(\ref{allswinnable}) or neither does.  A similar argument gives us that either both of $G$ and $G'$ satisfy Theorem~\ref{pendantremove}(\ref{nullshit}) or neither does.  Thus, $\overline{G}$ is $N$-AW if and only if $\overline{G'}$ is $N$-AW.  Furthermore, since $C$ and $C'$ have the same order, so do $\overline{G}$ and $\overline{G'}$.  Finally, since $C'$ has smaller size than $C$, $\overline{G'}$ has larger size than $\overline{G}$.  Since $\overline{G'}$ and $\overline{G}$ have the same order and same winnability but $\overline{G'}$ has larger size, $\overline{G}$ cannot be $(n,\ell)$-extremal.
\end{proof}

\section{Extremal Graphs} \label{Extremal Graphs}

Recall that $\max(n,\ell)$ is the maximum number of edges in an $N$-AW graph with $n$ vertices and a graph is $(n,\ell)$-extremal provided that it has order $n$, size $\max(n,\ell)$, and  is $N$-AW. We begin with straightforward upper and lower bounds on $\max(n,\ell)$.
 \begin{prop} \label{bestcase}
For any $n,\ell\in\mathbb{N}$ we have
	\[ \binom{n}{2} - (n-1) \leq \max(n,\ell) \leq \binom{n}{2} - \left\lfloor \dfrac{n}{2} \right\rfloor .\]
\end{prop}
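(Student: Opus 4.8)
The plan is to establish the two bounds separately, each via an explicit construction for the lower bound and a twin/domination argument for the upper bound. For the lower bound, I would exhibit an $N$-AW graph on $n$ vertices with exactly $\binom{n}{2}-(n-1)$ edges. The natural candidate is the complement of a star $K_{1,n-1}$, i.e., $G=\overline{K_{1,n-1}}$, which has $\binom{n}{2}-(n-1)$ edges. To see that $G$ is $N$-AW, note that $K_{1,n-1}$ is itself a pendant graph when $n$ is even (it is $K_{1}\astrosun K_1$ together with extra pendants? — more carefully, $K_{1,n-1}$ has one center with $n-1$ pendant leaves, so it is not of the form $H\astrosun K_1$ in general). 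Instead I would use Corollary~\ref{2c-1} or Lemma~\ref{pendantN} directly: I want a spanning forest $F$ on $n$ vertices whose complement is $N$-AW, and Corollary~\ref{2c-1} tells us $\overline{F}$ is $N$-AW exactly when $\gcd(2c-1,\ell)=1$, where $c$ is the number of components. So I would instead take $\overline{G}$ to be a disjoint union of pendant trees (paths $P_2$ and one longer path if needed) chosen so that the number of components $c$ satisfies $\gcd(2c-1,\ell)=1$ — always possible by choosing $c$ appropriately, e.g. $c=1$ gives $\gcd(1,\ell)=1$, so a single pendant tree (spanning tree of the right shape) on $n$ vertices works when $n$ is even, giving $\binom{n}{2}-(n-1)$ edges. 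For $n$ odd I would need a slightly different pendant graph; since Lemma~\ref{pendantN} handles pendant graphs of arbitrary size $m$ and order $n$ with the criterion $\gcd(2(n-m)-1,\ell)=1$, taking $m=n-1$ (a spanning pendant tree, which needs $n$ even) or adjusting by one edge handles all parities. The cleanest route: produce a pendant graph $G^*$ of order $n$ and size $n-1$ when $n$ is even, and one of size $n-1$ that still satisfies the gcd condition with $c=1$; then $\overline{G^*}$ is $N$-AW of size $\binom{n}{2}-(n-1)$.

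For the upper bound, I would argue by contradiction: suppose $G$ is $N$-AW with more than $\binom{n}{2}-\lfloor n/2\rfloor$ edges, equivalently $\overline{G}$ has fewer than $\lfloor n/2\rfloor$ edges. A graph on $n$ vertices with fewer than $\lfloor n/2\rfloor$ edges cannot have a perfect matching (if $n$ even) or near-perfect matching (if $n$ odd), so by a counting/pigeonhole argument $\overline{G}$ has at least two isolated vertices, or more precisely $\overline{G}$ has an independent set of size at least $2$ among... — the key point is that with so few edges, $\overline{G}$ must contain two vertices $u,w$ that are non-adjacent in $\overline{G}$ and have identical (empty, or at least equal) neighborhoods in $\overline{G}$. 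Concretely: if $\overline{G}$ has at most $\lfloor n/2\rfloor - 1$ edges, then at least $n - 2(\lfloor n/2\rfloor-1) \ge 2$ vertices are isolated in $\overline{G}$, hence are mutually adjacent twins in $G$. By Corollary~\ref{twins} (with $M=N(G)$), $G$ is not $N$-AW — contradiction. This gives $\max(n,\ell)\le \binom{n}{2}-\lfloor n/2\rfloor$.

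I expect the main obstacle to be the lower bound in the case $n$ odd, where a spanning pendant tree does not exist (pendant graphs $H\astrosun K_1$ have even order). There I would either take a pendant graph of order $n-1$ plus one extra vertex joined appropriately in the complement, or invoke Lemma~\ref{pendantN} with size $m=n-1$ after checking the parity bookkeeping — the resulting graph has complement of size $\binom{n}{2}-(n-1)$ and the winnability reduces to $\gcd(2(n-m)-1,\ell)=\gcd(1,\ell)=1$, which always holds. The bulk of the work is confirming that such a pendant graph (or near-pendant graph) of the prescribed order and size actually exists for every $n$; once the existence is in hand, $N$-winnability is immediate from Lemma~\ref{pendantN} and the upper bound is immediate from Corollary~\ref{twins}.
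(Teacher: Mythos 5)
Your upper bound argument is exactly the paper's: if $\overline{G}$ has at most $\lfloor n/2\rfloor-1$ edges, then at least $n-2(\lfloor n/2\rfloor-1)\ge 2$ vertices are isolated in $\overline{G}$, i.e., are dominating vertices of $G$, hence $N$-twins, and Corollary~\ref{twins} applies. Your lower bound for even $n$ also lands on the paper's construction: a spanning pendant tree has $c=1$ component, so its complement is $N$-AW by Corollary~\ref{2c-1} and has exactly $\binom{n}{2}-(n-1)$ edges.

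The genuine gap is the odd-$n$ case of the lower bound, which you correctly flag but do not close. Of your two proposed fixes, the one that invokes Lemma~\ref{pendantN} with $m=n-1$ fails outright: a pendant graph $H\astrosun K_1$ always has even order, so for odd $n$ there is no pendant graph of order $n$ of any size, and no amount of parity bookkeeping recovers it. Your other suggestion is the right one and should be made precise: take $\overline{G}=T\cup K_1$ where $T$ is a pendant tree on $n-1$ vertices; then $\overline{G}$ has $n-2\le n-1$ edges, and $\overline{T\cup K_1}$ is $N$-AW by Theorem~\ref{subsetjoindom} because $T$ is $A$-AW by Lemma~\ref{pendwin}. (Alternatively, for odd $n$ the near-perfect matching $M_n$ works via the same theorem and leaves even fewer edges in the complement.) For what it is worth, the paper's own one-line justification---``the complement of any pendant tree is $N$-AW''---silently skips the same point, since pendant trees exist only in even order; but having explicitly raised the obstacle, you need to resolve it rather than leave both branches hanging, and one of your two branches is a dead end.
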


\begin{proof}
For the right inequality, if $|E(\overline{G})| < \left\lfloor \frac{n}{2} \right\rfloor$, at most $\left\lfloor \frac{n}{2} \right\rfloor-1$ edges are removed from $K_n$ to obtain $G$.  Thus, at most $2\left(\left\lfloor \frac{n}{2} \right\rfloor -1\right)\le n-2$ vertices of $K_n$ can have their degrees reduced by one or more to obtain $G$.  So, at least two vertices in $G$ are dominating vertices. Two dominating vertices are $N$-twins, and such a $G$ is not $N$-AW by Corollary \ref{twins}. On the other hand we know $\max(n,\ell)\geq \binom{n}{2} - (n-1)$ since the complement of any pendant tree is $N$-AW for all $\ell$ by Corollary \ref{2c-1}.
\end{proof}

To obtain the upper bound of Proposition~\ref{bestcase}, we need $G$ to be a perfect or near-perfect matching.  Let $M_n$ be a perfect matching on $n$ vertices when $n$ is even and a near-perfect matching on $n$ vertices when $n$ is odd.  The following two results show us when $M_n$ is $(n,\ell)$-extremal.

\begin{prop}\label{prop:oddmatch}
If $n$ is odd, then \[\max(n,\ell)=\binom{n}{2}-\left\lfloor\frac{n}{2}\right\rfloor\]
for all $\ell\in\mathbb{N}$. Moreover, $\overline{M_n}$ is the unique $N$-AW of maximum size on $n$ vertices.
\end{prop}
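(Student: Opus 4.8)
The plan is to pair the upper bound already contained in Proposition~\ref{bestcase} with a matching lower bound witnessed by $\overline{M_n}$, and then to prove uniqueness by a short counting argument that forces $\overline{G}$ to be a matching, invoking Corollary~\ref{twins} in the non-matching case.

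Proposition~\ref{bestcase} gives $\max(n,\ell) \le \binom{n}{2} - \lfloor n/2 \rfloor = \binom{n}{2} - \frac{n-1}{2}$ for $n$ odd, so it suffices to check that $\overline{M_n}$ (which has exactly this many edges) is $N$-AW. Since $n$ is odd, $M_n \cong M_{n-1} \cup K_1$, where $M_{n-1}$ is a perfect matching on the even number $n-1$ of vertices, i.e.\ a disjoint union of $\tfrac{n-1}{2}$ copies of $K_2$. The adjacency matrix of $K_2$ has determinant $-1$, a unit, so $K_2$ is $A$-AW by Lemma~\ref{matrixLO}(\ref{matrixLOinv}) (alternatively, $K_2 = K_1 \astrosun K_1$ is a pendant graph, so Lemma~\ref{pendwin}(\ref{pendwinall}) applies); hence the disjoint union $M_{n-1}$ is $A$-AW as well. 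Theorem~\ref{subsetjoindom} then says $\overline{M_{n-1} \cup K_1} \cong \overline{M_n}$ is $N$-AW, and since $|E(\overline{M_n})| = \binom{n}{2} - \frac{n-1}{2}$ this yields equality in the claimed formula.

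For uniqueness, suppose $G$ is $N$-AW on $n$ vertices with $|E(G)| = \binom{n}{2} - \frac{n-1}{2}$, so $\overline{G}$ has $\frac{n-1}{2}$ edges. If some vertex of $\overline{G}$ had degree at least $2$, then splitting off the (at least two) edges at that vertex shows that the $\frac{n-1}{2}$ edges of $\overline{G}$ cover at most $2 \cdot \frac{n-1}{2} - 1 = n - 2$ vertices, so $\overline{G}$ would have at least two isolated vertices, i.e.\ $G$ would have at least two dominating vertices. Those vertices are adjacent and have identical (all-ones) rows in $N(G)$, so they are $N$-twins, and Corollary~\ref{twins} would contradict $N$-AW. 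Hence $\overline{G}$ is a matching with $\frac{n-1}{2}$ edges on $n$ (odd) vertices, which is exactly a near-perfect matching, so $\overline{G} \cong M_n$ and $G \cong \overline{M_n}$.

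I do not anticipate a real obstacle here; the only steps needing mild care are the vertex-cover count in the uniqueness argument and remembering that over the (generally non-field) ring $\mathbb{Z}_\ell$ one needs the determinant to be a \emph{unit} rather than merely nonzero, which is why $\det A(K_2) = -1$ is precisely the right computation.
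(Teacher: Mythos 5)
Your proposal is correct and follows essentially the same route as the paper: upper bound from Proposition~\ref{bestcase}, lower bound by writing $M_n = M_{n-1}\cup K_1$ with $M_{n-1}$ an $A$-AW pendant graph (the paper cites Lemma~\ref{pendwin}, you additionally note $\det A(K_2)=-1$) and applying Theorem~\ref{subsetjoindom}, and uniqueness by forcing two dominating vertices, hence $N$-twins, in any other graph of that size. Your vertex-cover count just makes explicit the step the paper leaves implicit.
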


\begin{proof}
We have that $M_{n-1}$ is a pendant graph, specifically $\left(\frac{n-1}{2}K_1\right) \astrosun K_1$, so by Lemma \ref{pendwin}(\ref{pendwinall}), $M_{n-1}$ is $A$-AW for all $\ell$. By Theorem~\ref{subsetjoindom}, $\overline{M_n}$ is $N$-AW. So, $\max(n,\ell)\geq \binom{n}{2}-\left\lfloor\frac{n}{2}\right\rfloor$ when $n$ is odd. By Proposition \ref{bestcase}, $\max(n,\ell)=\binom{n}{2}-\left\lfloor\frac{n}{2}\right\rfloor$.  
Any graph $G\neq \overline{M_n}$ with $\binom{n}{2}-\left\lfloor\frac{n}{2}\right\rfloor$ edges  must have at least two dominating vertices in $G$, which are $N$-twins. Thus, $\overline{M_n}$ is unique.
\end{proof}

However, when $n$ is even, not all complements of perfect matchings give us $(n,\ell)$-extremal graphs.

\begin{prop} \label{evenmatch}
If $n$ is even, then 
\begin{center}
$\max(n,\ell)=\binom{n}{2}-\frac{n}{2}$ if and only if $\gcd(n-1,\ell)=1$. 
\end{center}
If $n$ is even and $\gcd(n-1,\ell)=1$, then $\overline{M_n}$ is the unique $N$-AW graph of maximum size on $n$ vertices.
\end{prop}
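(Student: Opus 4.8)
The plan is to combine the two bounds of Proposition~\ref{bestcase} with the winnability criterion for complements of pendant trees (Corollary~\ref{2c-1}) and a short case analysis on graphs whose complement has exactly $n/2$ edges.

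First I would handle the ``if'' direction. Observe that the perfect matching $M_n$ is a pendant forest: writing $M_n = \left(\frac{n}{2}K_1\right)\astrosun K_1$ exhibits it as $\frac{n}{2}$ disjoint copies of the pendant tree $P_2$, so it has $c=\frac{n}{2}$ components, each a pendant tree. By Corollary~\ref{2c-1}, $\overline{M_n}$ is $N$-AW if and only if $\gcd(2c-1,\ell)=\gcd(n-1,\ell)=1$. Hence, when $\gcd(n-1,\ell)=1$, the graph $\overline{M_n}$ is $N$-AW of size $\binom{n}{2}-\frac{n}{2}$, so $\max(n,\ell)\ge\binom{n}{2}-\frac{n}{2}$; the reverse inequality is the upper bound of Proposition~\ref{bestcase} using $\lfloor n/2\rfloor = n/2$. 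This yields $\max(n,\ell)=\binom{n}{2}-\frac{n}{2}$.

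The heart of the argument is the following claim, which simultaneously gives the ``only if'' direction and uniqueness: \emph{if $n$ is even, $|E(\overline{G})|=\frac{n}{2}$, and $\overline{G}\not\cong M_n$, then $G$ is not $N$-AW, for every $\ell$.} To prove it, note $\sum_{v}\deg_{\overline{G}}(v)=n$, so if $\overline{G}\not\cong M_n$ then some vertex has degree different from $1$; since $n$ vertices have degrees summing to $n$, there is a vertex of degree $\ge 2$ and hence, by a counting/pigeonhole argument, a vertex of degree $0$. If $\overline{G}$ has at least two isolated vertices, then $G$ has at least two dominating vertices, which are $N$-twins, so $G$ is not $N$-AW by Corollary~\ref{twins}. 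Otherwise $\overline{G}$ has exactly one isolated vertex $u$, and $\overline{G}-u$ has $n-1$ vertices, $\frac{n}{2}$ edges and minimum degree $\ge 1$; since $\sum(\deg-1)=n-(n-1)=1$, exactly one vertex of $\overline{G}-u$ has degree $2$ and all others have degree $1$, forcing $\overline{G}-u\cong P_3\cup\frac{n-4}{2}K_2$ (just $P_3$ when $n=4$). Writing the $P_3$ as $a$--$w$--$b$, one checks $N_G[a]=N_G[b]=V(G)\setminus\{w\}$, so $a$ and $b$ are $N$-twins in $G$, and $G$ is not $N$-AW by Corollary~\ref{twins}. (Alternatively one may invoke Theorem~\ref{subsetjoindom}: $\overline{G}-u\cong P_3\cup\frac{n-4}{2}K_2$ contains $A$-twins, hence is not $A$-AW, so $G=\overline{(\overline{G}-u)\cup K_1}$ is not $N$-AW.) This proves the claim.

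Finally I would assemble the pieces. If $\gcd(n-1,\ell)>1$, then $\overline{M_n}$ is not $N$-AW by Corollary~\ref{2c-1}, and the claim shows no other $n$-vertex graph of size $\binom{n}{2}-\frac{n}{2}$ is $N$-AW, so $\max(n,\ell)<\binom{n}{2}-\frac{n}{2}$; together with the ``if'' direction this gives the stated equivalence. If instead $\gcd(n-1,\ell)=1$, then $\max(n,\ell)=\binom{n}{2}-\frac{n}{2}$ and the claim shows the only $N$-AW graph of that size is $\overline{M_n}$, giving uniqueness. Everything here is a direct application of tools already developed; the only step requiring any care is the degree count pinning down $\overline{G}-u$ as $P_3\cup\frac{n-4}{2}K_2$ and the attendant $N$-twin (or $A$-twin) identification, together with the trivial check of the small case.
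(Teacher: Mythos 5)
Your proof is correct and follows the same basic route as the paper: Corollary~\ref{2c-1} applied to $M_n=\left(\frac{n}{2}K_1\right)\astrosun K_1$ gives the winnability criterion $\gcd(n-1,\ell)=1$, the upper bound comes from Proposition~\ref{bestcase}, and uniqueness comes from exhibiting $N$-twins in every other graph of that size. The one place you diverge is worth noting: the paper disposes of uniqueness in a single clause, asserting that any other graph with $\binom{n}{2}-\frac{n}{2}$ edges has two dominating vertices, but your degree count shows this is not literally true --- the configuration $\overline{G}\cong P_3\cup\frac{n-4}{2}K_2\cup K_1$ has exactly one isolated vertex in the complement, hence only one dominating vertex in $G$. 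Your explicit identification of the two end vertices of the $P_3$ as $N$-twins (equivalently, the $A$-twin argument via Theorem~\ref{subsetjoindom}) is exactly what is needed to close that case, so your write-up is in fact more complete than the paper's own justification.
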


\begin{proof}
Each component of $M_n$ is a pendant tree. By Corollary \ref{2c-1} $\overline{M_n}$ is $N$-AW if and only if $\gcd \left( 2 \left( \frac{n}{2} \right)-1,\ell \right)=\gcd(n-1,\ell)=1$. 
That $\overline{M_n}$ is the unique $N$-AW graph of maximum size on $n$ vertices again follows from two dominating vertices being $N$-twins in any other case.
\end{proof}

It turns out that when $n$ is even finding $\max(n,\ell)$ is considerably more complicated, though we conjecture that almost all extremal graphs are complements of pendant graphs. In Proposition \ref{prop:triangle} we find the extremal graphs where $n$ is even and $\ell$ is odd. This is the only situation we have found in which an $(n,\ell)$-extremal graph is not the complement of a pendant graph.

\begin{prop}\label{prop:triangle}
Suppose that $n \ge 4$ is even.  If $\ell$ is odd  and $\gcd(n-1,\ell)\neq 1$ then $\max(n,\ell) = \binom{n}{2}-\left(\frac{n}{2}+1\right)$. In this case an $(n,\ell)$-extremal graph is $\overline{C_3 \cup \left( \frac{n-4}{2} \right) P_2 \cup K_1}$. 
\end{prop}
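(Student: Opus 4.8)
The plan is to pin down $\max(n,\ell)$ by a matching pair of bounds and then verify that the displayed graph realizes it.

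\textbf{Upper bound.} First I would invoke the two preceding propositions rather than re-deriving anything. Proposition~\ref{bestcase} gives $\max(n,\ell)\le\binom{n}{2}-\lfloor n/2\rfloor=\binom{n}{2}-\frac n2$ because $n$ is even, and Proposition~\ref{evenmatch} says this bound is attained exactly when $\gcd(n-1,\ell)=1$. Since we are assuming $\gcd(n-1,\ell)\neq 1$, equality fails, so $\max(n,\ell)\le\binom{n}{2}-\frac n2-1=\binom{n}{2}-\left(\frac n2+1\right)$.

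\textbf{Lower bound via the adjacency game.} Set $H=C_3\cup\left(\frac{n-4}{2}\right)P_2$, so that the candidate graph is $\overline{H\cup K_1}$, which has order $3+(n-4)+1=n$. By Theorem~\ref{subsetjoindom}, $\overline{H\cup K_1}$ is $N$-AW if and only if $H$ is $A$-AW, and since winnability of the adjacency game decomposes over connected components, it suffices to check that each of $C_3$ and $P_2$ is $A$-AW. The path $P_2$ is $A$-AW for every $\ell$ (as already used in the proof of Corollary~\ref{extdom}). For $C_3$ I would compute $\det A(C_3)=2$ from the adjacency matrix $\begin{pmatrix}0&1&1\\1&0&1\\1&1&0\end{pmatrix}$; because $\ell$ is odd, $2$ is a unit in $\mathbb{Z}_\ell$, so $A(C_3)$ is invertible and $C_3$ is $A$-AW by Lemma~\ref{matrixLO}(\ref{matrixLOinv}). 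Hence $H$ is $A$-AW and $\overline{H\cup K_1}$ is $N$-AW.

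\textbf{Edge count and conclusion.} The graph $H\cup K_1$ has $3+\frac{n-4}{2}=\frac n2+1$ edges, so $\overline{H\cup K_1}$ has exactly $\binom{n}{2}-\left(\frac n2+1\right)$ edges. Combined with the upper bound this forces $\max(n,\ell)=\binom{n}{2}-\left(\frac n2+1\right)$ and exhibits $\overline{C_3\cup\left(\frac{n-4}{2}\right)P_2\cup K_1}$ as an $(n,\ell)$-extremal graph. I do not anticipate a real obstacle here: the only genuinely content-bearing step is the computation $\det A(C_3)=2$, which is precisely what makes the ``$\ell$ odd'' hypothesis do its work, and the upper bound is essentially free once one notices it can be bootstrapped from Proposition~\ref{evenmatch} rather than argued from scratch.
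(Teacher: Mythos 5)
Your proposal is correct and follows essentially the same route as the paper: both obtain the upper bound by combining Proposition~\ref{bestcase} with Proposition~\ref{evenmatch}, and both establish the lower bound by applying Theorem~\ref{subsetjoindom} to reduce to the $A$-winnability of $C_3 \cup \left(\frac{n-4}{2}\right)P_2$ componentwise. The only cosmetic difference is that you verify $C_3$ is $A$-AW for odd $\ell$ by computing $\det A(C_3)=2$ directly, where the paper cites row reduction of the adjacency matrix.
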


\begin{proof}
We first show that $H=\overline{C_3 \cup \left( \frac{n-4}{2} \right) P_2 \cup K_1}$, which has $\binom{n}{2}- \left( \frac{n}{2}+1 \right)$ edges, is $N$-AW.  By Theorem~\ref{subsetjoindom}, we need only prove that $C_3 \cup \left( \frac{n-4}{2} \right) P_2$ is $A$-AW.  Clearly, $P_2$ is $A$-AW  for all $\ell \in \mathbb{N}$. 
We can see that $C_3$ is $A$-AW if and only if $\ell$ is odd by row reducing the adjacency matrix of $C_3$.
By playing on each component, $C_3 \cup \left( \frac{n-4}{2} \right) P_2$ is $A$-AW, and so $H$ is $N$-AW. 

We now show that $H$ is $(n,\ell)$-extremal.  Since $\gcd(n-1,\ell) \ne 1$, Proposition~\ref{evenmatch} implies that $M_n$ is not $(n,\ell)$-extremal. By the uniqueness of $M_n$, 
$\max(n,\ell) \leq \binom{n}{2}-\left(\frac{n}{2}+1\right)$. Since $H$ is $N$-AW and $|E(\overline{H})|=\binom{n}{2}-\left(\frac{n}{2}+1\right)$ $\max(n,\ell)=\binom{n}{2}-\frac{n}{2}+1$.
\end{proof}

Since $\overline{M_n}$ is the $(n,\ell)$-extremal graph in the case that $n$ is odd and $\overline{C_3 \cup \left( \frac{n-4}{2} \right) P_2 \cup K_1}$ is an $(n,\ell)$-extremal graph in the case that $n$ is even and $\ell$ is odd, from here on we consider only cases where $n$ and $\ell$ are both even. In this case we find the quantity given in Conjecture \ref{conjecture} is a lower bound.

\begin{prop}\label{prop:lowerbound}
If $n$ and $\ell$ are both even then 
\[ \max(n,\ell)\geq \binom{n}{2} - \left(\frac{n}{2} + k\right)\]
where $k$ is the smallest nonnegative integer such that $\gcd(n-2k-1,\ell)=1$. 
\end{prop}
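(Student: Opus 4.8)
The plan is to prove the lower bound constructively: for each even $n$ and even $\ell$, exhibit a single $N$-AW graph on $n$ vertices with exactly $\binom{n}{2} - \left(\frac{n}{2}+k\right)$ edges. Since $\max(n,\ell)$ is by definition the largest size of an $N$-AW graph of order $n$, producing one such graph suffices. The natural candidate is the complement of a pendant graph, because Lemma~\ref{pendantN} characterizes exactly when such complements are $N$-AW purely in terms of order and size.

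First I would record the numerical fact that makes the construction legal: the integer $k$ in the statement satisfies $0 \le k \le \frac{n}{2}-1$. Indeed, taking $k=\frac{n}{2}-1$ gives $n-2k-1 = 1$, and $\gcd(1,\ell)=1$, so by the minimality of $k$ we must have $k \le \frac{n}{2}-1$. Consequently $k \le \frac{n}{2}-1 \le \binom{n/2}{2}$ (the last inequality holding for every $n\ge 2$), so there exists a graph $H$ on $\frac{n}{2}$ vertices with exactly $k$ edges.

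Next, set $G = H \astrosun K_1$. Then $G$ is a pendant graph of order $2\cdot\frac{n}{2}=n$ and size $|E(H)| + \frac{n}{2} = k + \frac{n}{2}$, so $\overline{G}$ has order $n$ and size $\binom{n}{2}-\left(\frac{n}{2}+k\right)$. Applying Lemma~\ref{pendantN} to this $G$: with size $\frac{n}{2}+k$ we have $2\bigl[n-(\tfrac{n}{2}+k)\bigr]-1 = n-2k-1$, which is coprime to $\ell$ by the choice of $k$. Hence $\overline{G}$ is $N$-AW, and therefore $\max(n,\ell) \ge |E(\overline{G})| = \binom{n}{2}-\left(\frac{n}{2}+k\right)$.

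The only step needing any care — the ``main obstacle,'' although a very mild one — is verifying that $k$ is small enough for a graph on $\frac{n}{2}$ vertices with $k$ edges to exist; this is precisely the bound $k \le \frac{n}{2}-1$, which follows from the triviality that $1$ is coprime to every $\ell$. Everything else is a direct invocation of Lemma~\ref{pendantN} (which in turn rests on Corollary~\ref{subsetjoinaw} and Lemma~\ref{pendwin}).
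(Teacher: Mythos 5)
Your proof is correct and takes essentially the same route as the paper: both exhibit the complement of a pendant graph of order $n$ and size $\frac{n}{2}+k$ and then invoke Lemma~\ref{pendantN}. The only difference is the witness graph — the paper takes $kP_4\cup\left(\frac{n}{2}-2k\right)P_2$, which implicitly requires $k\le \frac{n}{4}$, whereas your generic $H\astrosun K_1$ needs only the bound $k\le\frac{n}{2}-1$ that you explicitly verify, so your version is, if anything, slightly more careful.
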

\begin{proof}
Suppose that $k$ is the smallest nonnegative integer such that $\gcd(n-2k-1,\ell)=1$. Let $G=kP_4\cup \left(\frac{n}{2} - 2k\right)P_2$. Then $G$ is a pendant graph and $\overline{G}$ has size $\binom{n}{2} - \left(\frac{n}{2} + k\right)$. So by Lemma \ref{pendantN} we have $\overline{G}$ is $N$-AW and the result follows. 
\end{proof}

Note when $k= \frac{n}{2}-1$ we have $n-2k-1 = 1$ and so $\gcd(n-2k-1,\ell)=1$. This gives us the lower bound in Proposition \ref{bestcase}. In the following two subsections we find $\max(n,\ell)$ in two cases: finding all graphs with minimum degree $n-2$ or $n-3$  that are $(n,\ell)$-extremal for any $\ell$ in Section \ref{mindegree}, and finding all combinations of $n$ and $\ell$ such that the $(n,\ell)$-extremal graph has $\binom{n}{2}-\left(\frac{n}{2}+k\right)$ edges for $0\leq k\leq 3$ in Section \ref{smallk}. 
In both perspectives we are led to pendant graphs, which supports Conjecture \ref{conjecture}.

\subsection{Extremal Graphs With a Given Minimum Degree }\label{mindegree}

The minimum degree of an $(n,\ell)$-extremal graph can not be $n-1$ because $K_n$ is not $N$-AW. Moreover, if the minimum degree is $n-2$ then, to avoid twins, the complement graph must be $M_n$. So, Propositions \ref{prop:oddmatch} and \ref{evenmatch} tell us that if $G$ is an $(n,\ell)$-extremal graph with minimum degree $n-2$, then $G=\overline{M_n}$, which is the complement of a pendant graph when $n$ is even. Thus, in this section we find $\max(n,\ell)$ among all graphs with minimum degree $n-3$. Recall we can assume $n$ and $\ell$ are even. 

If $G$ has minimum degree $n-3$ the complement has maximum degree $2$. So the components of the complement graph are paths and cycles.  We denote the cycle graph $C_k$ by $V(C_k) = \{ v_i : 1 \le i \le k \}$, $E(C_k) = \{ v_iv_{i+1}, v_kv_1 : 1 \le i \le k-1 \}$.  Our approach to the $A$-Lights Out game on $C_k$ is similar to our approach to the $N$-Lights Out game in \cite{paper11}.  We first reduce an arbitrary labeling to a canonical labeling, and then determine when these canonical labelings can be won.

To that end, for $a,b \in \mathbb{Z}_\ell$ we define  $\lambda_{a,b}$ to be the labeling where $v_1$ has label $a$, $v_2$ has label $b$, and the other vertices have label $0$.  By a straightforward induction proof, given any initial labeling of $C_k$ in the $A$-Lights Out game, the vertices can be toggled to achieve the $\lambda_{a,b}$ labeling for some $a,b \in \mathbb{Z}_\ell$.  These are our canonical labelings.  The following lemma shows how we deal with the labelings $\lambda_{a,b}$ and $(\lambda_{a,b})_s$.


\begin{lem} \label{labelingstuff}
Let $\pi$ be a labeling of $V(C_k)$, and let $\ell$ be even.

\begin{enumerate}
\item \label{winlambda} The labeling $\lambda_{a,b}$ is $A$-winnable precisely in the following circumstances.
	\begin{itemize}
	\item When $n \equiv 0$ (mod 4) and $a=b=0$.
	\item When $n \equiv 1,3$ (mod 4) and $a$ and $b$ have the same parity.
	\item When $n \equiv 2$ (mod 4) and $a$ and $b$ are both even.
	\end{itemize}
\item \label{strans} The labeling $(\lambda_{a,b})_s$ can be toggled in the $A$-Lights Out game to obtain the following labelings.
	\begin{itemize}
	\item When $n \equiv 0$ (mod 4), $\lambda_{a,b}$.
	\item When $n \equiv 1$ (mod 4), $\lambda_{a,b-s}$.
	\item When $n \equiv 2$ (mod 4), $\lambda_{a-s,b-s}$.
	\item When $n \equiv 3$ (mod 4), $\lambda_{a-s,b}$.
	\end{itemize} 
\end{enumerate} 
\end{lem}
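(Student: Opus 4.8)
The plan is to handle the two parts of the lemma by the same mechanism: explicit descriptions of how toggles act on the path of labels $(v_1,v_2,\dots,v_k)$ around the cycle, using the canonical form $\lambda_{a,b}$ established just before the statement. Recall the reduction: by a left-to-right induction (toggle $v_1$ to kill the label of $v_k$ via the edge $v_kv_1$, then toggle $v_2$ to kill $v_1$, etc.\ — being careful with the adjacency, not neighborhood, action), any labeling can be brought to some $\lambda_{a,b}$. So it suffices to understand (i) which $\lambda_{a,b}$ are the zero labeling reachable from, i.e.\ which $\lambda_{a,b}$ are $A$-winnable, and (ii) how adding the all-$s$ labeling $\mathbf{0}_s$ interacts with this canonical form.

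For part~(\ref{strans}) I would start from the observation that winning $\mathbf 0$ from $(\lambda_{a,b})_s$ is the same as: first reduce $(\lambda_{a,b})_s$ to canonical form, and track what that canonical form is as a function of $a,b,s,k$. The key building block is the effect of toggling a single well-chosen ``block'' of consecutive vertices. In the $A$-Lights Out game on $C_k$, toggling vertices $v_{i+1},\dots,v_{i+j}$ (indices mod $k$) changes only $v_i$ and $v_{i+j+1}$ when $j\ge 2$, each by the number of times the relevant endpoint neighbor is toggled — so toggling each of $v_1,\dots,v_k$ exactly once changes every label by $2$; toggling $v_2,v_3,\dots,v_k$ (all but $v_1$) changes $v_1$ and $v_1$ again, i.e.\ changes $v_1$ by $2$ and leaves the rest alone when $\ell\mid$ ... — here the dependence on $k\bmod 4$ enters because a natural ``alternating'' toggling set (toggle $v_2,v_4,v_6,\dots$) affects the labels differently depending on whether $k$ is $\equiv 0,1,2,3\pmod 4$ and whether the alternation ``wraps consistently'' around the odd/even cycle. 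Concretely I expect: there is a toggling that sends $\mathbf 0_s$ to $\lambda_{0,0}$ when $4\mid k$ (the even-indexed and odd-indexed vertices can be toggled by amounts that cancel everywhere), to $\lambda_{0,-s}$ when $k\equiv 1$, to $\lambda_{-s,-s}$ when $k\equiv 2$, and to $\lambda_{-s,0}$ when $k\equiv 3$; equivalently, applying such a toggling to $(\lambda_{a,b})_s$ yields the claimed $\lambda_{a,b}$, $\lambda_{a,b-s}$, $\lambda_{a-s,b-s}$, $\lambda_{a-s,b}$. The cleanest way to pin these down is to exhibit, in each residue class, an explicit vector $\mathbf x$ of toggle-counts (built from the pattern $\dots,1,0,1,0,\dots$ or $\dots,1,1,0,0,\dots$ scaled by $s$ or a half-period) and compute $A(C_k)\mathbf x$ directly; I would only verify one representative small case per residue (e.g.\ $k=4,5,6,7$) and then note the pattern continues by inserting $P_4$-like blocks of four consecutive vertices toggled $1,0,1,0$, which is null.

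For part~(\ref{winlambda}), combine part~(\ref{strans}) with two facts. First, $\lambda_{a,b}$ with $a,b$ even is always winnable when $\ell$ is even: write $a=2a'$, $b=2b'$ and use the ``toggle everything once'' move (which shifts all labels by $2$) together with its translates to clear even labels — more precisely, toggling $v_1$ alone shifts $v_k,v_1,v_2$; so a suitable integer combination of single-vertex toggles can realize any labeling with all labels even, because the adjacency matrix of $C_k$ over $\mathbb Z$ has the all-ones vector mapping to the all-twos vector and, working mod $\ell$ with $\ell$ even, its image contains $2\mathbb Z_\ell^k$. Second, a parity obstruction: summing all coordinates of $A(C_k)\mathbf x=-\mathbf b$ gives $2\sum x_i \equiv -\sum b_i\pmod\ell$, so $\sum b_i$ must be even — that already forces $a+b$ even in $\lambda_{a,b}$. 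When $4\nmid k$ this is the only obstruction (so $a,b$ same parity suffices, and when $k\equiv 2$ a finer count on alternating sums forces both even — obtained by summing $A(C_k)\mathbf x=-\mathbf b$ against the alternating sign vector, which is well-defined precisely when $k$ is even); when $4\mid k$ there are two independent such linear functionals (sum and alternating sum, plus the structure forces $\gcd$-type conditions) collapsing the winnable set to $\lambda_{0,0}$ only. The main obstacle is getting the $4\mid k$ versus $k\equiv 2\pmod 4$ distinction exactly right: both have an alternating-sum functional available, so I will need to identify the extra constraint in the $4\mid k$ case — I expect it comes from the fact that when $4\mid k$ the vector that is $+1,-1,+1,-1,\dots$ and the vector $1,0,-1,0,1,0,-1,0,\dots$ are both in the left kernel (or give independent linear conditions) of $A(C_k)$ modulo $2$, whereas for $k\equiv 2\pmod 4$ only one such condition survives. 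I would verify the kernel dimension of $A(C_k)$ over $\mathbb F_2$ by the standard fact that $\dim\ker A(C_k)$ over $\mathbb F_2$ is $2$ iff $k\equiv 0\pmod 4$ (equivalently $x^k-1$ and $(x+1)^2$ share a factor pattern), $1$ if $k\equiv 1,2,3$ in the right sub-pattern, and read off the winnability conditions from that, then lift to $\mathbb Z_\ell$ using the ``all even labels winnable'' fact above. That $\mathbb F_2$ kernel computation is the one genuinely delicate point; everything else is bookkeeping with explicit toggle vectors.
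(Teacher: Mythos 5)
There is a genuine gap in your treatment of part~(\ref{winlambda}), and it sits exactly at the point you yourself flag as "the one genuinely delicate point." Your first building block --- that $\lambda_{a,b}$ with $a,b$ even is always winnable when $\ell$ is even because the image of $A(C_k)$ contains $2\mathbb{Z}_\ell^k$ --- is false precisely when $4\mid k$, and it contradicts the statement you are proving (which says only $\lambda_{0,0}$ is winnable there). Concretely, for $k=4$ and $\ell=4$ the columns of $A(C_4)$ span only the vectors with $b_1=b_3$ and $b_2=b_4$, so $\lambda_{2,0}$ is not winnable. The tool you propose for rescuing the $4\mid k$ versus $k\equiv 2 \pmod 4$ distinction --- the kernel dimension of $A(C_k)$ over $\mathbb{F}_2$ --- cannot do the job: that kernel is spanned by $(1,0,1,0,\dots)$ and $(0,1,0,1,\dots)$ and has dimension $2$ for \emph{every} even $k$, not only for $4\mid k$. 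The distinction is not a mod-$2$ phenomenon at all; it comes from the fact that $(1,0,-1,0,1,0,-1,0,\dots)$ lies in the \emph{integer} kernel of $A(C_k)$ exactly when $4\mid k$ (forcing $a=b=0$ over $\mathbb{Z}_\ell$), whereas for $k\equiv 2\pmod 4$ the integer kernel is trivial and $\det A(C_k)=\pm 4$, giving only the two mod-$2$ conditions. Relatedly, your proposed functionals (total sum and alternating sum) are the same functional mod $2$ and each yields only ``$a\equiv b$''; to get ``$a$ and $b$ both even'' for $k\equiv 2\pmod 4$ you need the odd-position and even-position sums separately. Finally, the sufficiency direction for odd $k$ (same parity implies winnable, e.g.\ $\lambda_{1,1}$) is asserted but not covered by any of your constructions.

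For comparison, the paper avoids all of this linear algebra over various rings: it writes down the recurrence that the toggle counts $t_i$ are forced to satisfy ($t_i$ is periodic of period $4$ in terms of $t_1,t_2,b$), and reads the winnability conditions directly off the two boundary equations at $i=k-1$ and $i=k$; part~(\ref{strans}) is an explicit toggle pattern ($-s$ on positions $\equiv 2,3\pmod 4$). Your part~(\ref{strans}) sketch is close in spirit to that, though note your claimed null block ($1,0,1,0$ on four consecutive vertices) is not null in the adjacency game --- the interior-null patterns have the form $t,u,-t,-u$ repeated --- so the ``verify small cases and extend'' step also needs the correct periodic pattern.
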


\begin{proof}
For (\ref{winlambda}), let $t_i$ be the number of times we toggle $v_i$  By a straightforward induction proof, it follows that $\lambda_{a,b}$ is $A$-winnable if and only if $t_{n-1}+t_1=0$, $t_n+a+t_2=0$, and for $2 \le i \le n$ we have
\begin{equation*} \label{tieq}
t_i = \begin{cases} -t_2 & i \equiv 0 \hbox{ (mod 4)} \\ b+t_1 & i \equiv 1 \hbox{ (mod 4)} \\ t_2 & i \equiv 2 \hbox{ (mod 4)} \\ -b-t_1 & i \equiv 3 \hbox{ (mod 4)} \end{cases}.
\end{equation*} 
Then (\ref{winlambda}) follows from using the equations above with $i=n-1$ and $i=n$.

For (\ref{strans}), we begin with the labeling $(\lambda_{a,b})_s$, and then each $v_i$ with $2 \le i < 4\left\lfloor \frac{n}{4} \right\rfloor$ and $i \equiv 2,3$ (mod 4) is toggled $-s$ times.  This results in the labeling where each $v_i$ with  
$1 \le i \le 4  \left\lfloor \frac{n}{4} \right\rfloor$ has label $\lambda_{a,b}(v_i)$ and each $v_i$ with $i>4\left\lfloor \frac{n}{4} \right\rfloor$ has label $(\lambda_{a,b})_s(v_i)$.  This gives us the $n \equiv 0$ (mod 4) case, and the other cases follow from appropriately toggling some combination of $v_1$, $v_{n-1}$, and $v_n$.
\end{proof} 

The next result helps us see how the presence of cycle components in a graph can affect how we apply Theorem~\ref{pendantremove}(\ref{penneighbor}).

\begin{lem} \label{notswin}
Let $G$ be a graph.
\begin{enumerate}
\item \label{notswineven} If $G$ has a connected component that is a cycle of even order, then $G$ has a labeling $\pi$ such that $\pi_s$ is not $A$-winnable for all $s \in \mathbb{Z}_\ell$.
\item \label{notswinplural} If $G$ has two connected components that are cycles, then $G$ has a labeling $\pi$ such that $\pi_s$ is not $A$-winnable for all $s \in \mathbb{Z}_\ell$.
\end{enumerate}
\end{lem}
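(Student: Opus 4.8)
The plan is to reduce to a single cycle component and then read off everything from Lemma~\ref{labelingstuff}. Recall (as used repeatedly above) that a labeling of a graph is $A$-winnable if and only if its restriction to each connected component is $A$-winnable, and that restricting $\pi_s$ to a component $C$ gives $(\pi|_C)_s$. So to prove part~(\ref{notswineven}) it suffices to fix the even cycle component, call it $C_k$, and produce a labeling $\mu$ of $V(C_k)$ with $\mu_s$ not $A(C_k)$-winnable for every $s$; we then take $\pi$ to be $\mu$ on $C_k$ and the zero labeling elsewhere. Throughout we use that $\ell$ is even, so both parities occur in $\mathbb{Z}_\ell$, and we always choose $\mu$ to be a canonical labeling $\lambda_{a,b}$.

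For part~(\ref{notswineven}) we split on the value of $k \bmod 4$. If $k \equiv 0 \pmod{4}$, Lemma~\ref{labelingstuff}(\ref{strans}) says $(\lambda_{a,b})_s$ can be toggled to $\lambda_{a,b}$ for every $s$, so by Lemma~\ref{labelingstuff}(\ref{winlambda}) it is $A$-winnable iff $a=b=0$; hence $\mu = \lambda_{1,0}$ works (here $\ell \ge 2$ makes $1 \ne 0$). If $k \equiv 2 \pmod{4}$, Lemma~\ref{labelingstuff}(\ref{strans}) sends $(\lambda_{a,b})_s$ to $\lambda_{a-s,b-s}$, which by Lemma~\ref{labelingstuff}(\ref{winlambda}) is $A$-winnable iff $a-s$ and $b-s$ are both even; taking $\mu = \lambda_{0,1}$ these two quantities always have opposite parity, so $(\lambda_{0,1})_s$ is never winnable. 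In either case extending $\mu$ by zeros gives the desired $\pi$.

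For part~(\ref{notswinplural}): if one of the two cycle components has even order we are done by part~(\ref{notswineven}), so assume both are odd cycles $C_j$ and $C_k$. The key point, again from Lemma~\ref{labelingstuff}, is that on an odd cycle the winnability of a shifted canonical labeling depends on $s$ only through its parity: when $n \equiv 1 \pmod{4}$, $(\lambda_{a,b})_s$ toggles to $\lambda_{a,b-s}$, winnable iff $a \equiv b-s \pmod{2}$; when $n \equiv 3 \pmod{4}$, it toggles to $\lambda_{a-s,b}$, winnable iff $a-s \equiv b \pmod{2}$. In particular, on either $C_j$ or $C_k$ and regardless of the residue mod $4$, $(\lambda_{0,0})_s$ is $A$-winnable exactly for even $s$, while $(\lambda_{1,0})_s$ is $A$-winnable exactly for odd $s$. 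So let $\pi$ be $\lambda_{0,0}$ on $C_j$, $\lambda_{1,0}$ on $C_k$, and zero elsewhere: for even $s$ the restriction of $\pi_s$ to $C_k$ is not $A$-winnable, and for odd $s$ the restriction to $C_j$ is not, so $\pi_s$ fails to be $A$-winnable for every $s$.

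I do not anticipate a genuine obstacle: once the component-decomposition reduction is in place, everything is immediate from Lemma~\ref{labelingstuff} and the parity map on $\mathbb{Z}_\ell$. The only place to be slightly careful is verifying that the chosen labelings $\lambda_{0,0}$ and $\lambda_{1,0}$ behave the same way whether an odd cycle is $1$ or $3 \pmod{4}$ — which is exactly the short computation in the previous paragraph — and using that $\ell$ even is what guarantees both even and odd values of $s$ exist to play against.
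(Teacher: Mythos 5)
Your proof is correct and follows essentially the same route as the paper's: reduce to the cycle components, use the canonical labelings $\lambda_{1,0}$ and $\lambda_{0,0}$ (your $\lambda_{0,1}$ in the $k\equiv 2\pmod 4$ case is just the symmetric choice), and apply the two parts of Lemma~\ref{labelingstuff} together with the parity argument, with $\ell$ even supplying both parities of $s$. No gaps.
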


\begin{proof}
For (\ref{notswineven}), let $C$ be a cycle component of $G$ with even order, and define a labeling that is $\lambda_{1,0}$ on $C$ and arbitrary on the remaining vertices of $G$.  Since a labeling is winnable on a graph if and only if it is winnable on each connected component, it suffices to prove that $(\lambda_{1,0})_s$ is not winnable on $C$ for all $s \in \mathbb{Z}_\ell$.  If $C$ has order divisible by 4, then Lemma~\ref{labelingstuff}(\ref{strans}) implies that the vertices of $C$ can be toggled to achieve $\lambda_{1,0}$, which is not winnable by Lemma~\ref{labelingstuff}(\ref{winlambda}).  If $C$ has order not divisible by 4, then by Lemma~\ref{labelingstuff}(\ref{strans}), the vertices can be toggled to achieve the labeling $\lambda_{1-s,-s}$.  Since $1-s$ and $s$ can never both be even, Lemma~\ref{labelingstuff}(\ref{winlambda}) implies that $\lambda_{1-s,-s}$ is not winnable for all $s \in \mathbb{Z}_\ell$.  In either case, $(\lambda_{1,0})_s$ is not winnable on $C$ for all $s \in \mathbb{Z}_\ell$, and so $(\lambda_{1,0})_s$ is not $A$-winnable for all $s \in \mathbb{Z}_\ell$.

For (\ref{notswinplural}), let $C$ and $C'$ be two cycle components of $G$.  By (\ref{notswineven}), we can assume each of $C$ and $C'$ has odd order.  We claim that for any labeling $\pi$ that restricts to $\lambda_{1,0}$ on $C$ and $\lambda_{0,0}$ on $C'$, $\pi_s$ is not $A$-winnable for all $s \in \mathbb{Z}_\ell$.  By Lemma~\ref{labelingstuff}(\ref{strans}), with initial labeling $\pi_s$, we can toggle the vertices of $G$ to obtain a labeling that restricts either to $\lambda_{1-s,0}$ or $\lambda_{1,-s}$ on $C$ and restricts either to $\lambda_{-s,0}$ or $\lambda_{0,-s}$ on $C'$.  If $s$ is even, then $1-s$ and 0 as well as $1$ and $-s$ have opposite parity.  If $s$ is odd, then $-s$ and 0 have opposite parity.  In any case, Lemma~\ref{labelingstuff}(\ref{winlambda}) implies that $\pi_s$ is not winnable, and so $\pi$ is not $A$-winnable for all $s \in \mathbb{Z}_\ell$.
\end{proof} 

Our next lemma helps us when we want to apply Theorem~\ref{subsetjoindom} to graphs with both a dominating vertex and a cycle component in its complement.

\begin{lem} \label{cyclenotaw}
If $\ell$ is even, then every cycle graph is not $A$-AW.
\end{lem}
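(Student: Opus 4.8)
The plan is to exhibit a single labeling of $C_k$ that is not $A$-winnable; by Lemma~\ref{matrixLO}(\ref{matrixLOinv}) this is equivalent to $A(C_k)$ being non-invertible over $\mathbb{Z}_\ell$, so it proves the claim. I would use the canonical labelings $\lambda_{a,b}$ introduced just above Lemma~\ref{labelingstuff}: take $\pi=\lambda_{1,0}$, so that $v_1$ has label $1$ and every other vertex has label $0$. Then I would invoke Lemma~\ref{labelingstuff}(\ref{winlambda}), which is available precisely because $\ell$ is even. If $k\equiv 0\pmod 4$, winnability of $\lambda_{1,0}$ would force $1=0$ in $\mathbb{Z}_\ell$; if $k\equiv 1$ or $3\pmod 4$, it would force $1$ and $0$ to have the same parity; if $k\equiv 2\pmod 4$, it would force $1$ to be even. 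Each is impossible, so in every residue class $\lambda_{1,0}$ is not $A$-winnable, and hence $C_k$ is not $A$-AW.

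Independently of Lemma~\ref{labelingstuff}, one can argue directly from the matrix equation $A(C_k)\textbf{x}=-\textbf{b}$ of Lemma~\ref{matrixLO}(\ref{matrixLOsys}). Since $C_k$ is $2$-regular, every column of $A(C_k)$ sums to $2$, so adding all $k$ coordinate equations yields $2\sum_{i=1}^{k} \textbf{x}[i]\equiv -\sum_{i=1}^{k}\textbf{b}[i]\pmod{\ell}$. Because $\ell$ is even, reduction modulo $2$ is a well-defined ring homomorphism $\mathbb{Z}_\ell\to\mathbb{Z}_2$ under which the left-hand side vanishes; hence $\sum_i \textbf{b}[i]$ must be even for $\textbf{b}$ to be winnable. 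Taking $\textbf{b}$ equal to $1$ on one vertex and $0$ elsewhere then gives a non-winnable labeling, so $C_k$ is not $A$-AW. (Equivalently: the all-ones vector lies in the kernel of $A(C_k)$ modulo $2$, so $\det A(C_k)$ is even and therefore not a unit in $\mathbb{Z}_\ell$ when $\ell$ is even.)

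I do not expect a genuine obstacle here; the statement is a short consequence of the machinery already in place, so the proof will be only a few lines. The one point requiring care is that the notions of ``parity'' and ``reduction mod $2$'' used in both arguments are meaningful in $\mathbb{Z}_\ell$ only because $\ell$ is even — which is exactly the hypothesis, and is also why the statement genuinely fails for odd $\ell$ (for instance $C_3$ is $A$-AW when $\ell$ is odd, as noted in the proof of Proposition~\ref{prop:triangle}). I would present the argument via $\lambda_{1,0}$ and Lemma~\ref{labelingstuff} as the main proof, since that lemma has just been established for this purpose, and perhaps remark on the column-sum/determinant shortcut.
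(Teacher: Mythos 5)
Your main argument is correct and is essentially the paper's own proof: the paper likewise invokes Lemma~\ref{labelingstuff}(\ref{winlambda}) on a labeling $\lambda_{a,b}$ with $a,b$ of opposite parity (your $\lambda_{1,0}$) and notes that none of the winnability criteria can hold. Your column-sum/determinant remark is also a valid independent check, but the paper does not use it.
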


\begin{proof}
By Lemma~\ref{labelingstuff}(\ref{winlambda}), if $a,b \in \mathbb{Z}_\ell$ have opposite parity, then $\lambda_{a,b}$ is not $A$-winnable.  The result follows.
\end{proof}

The following theorem gives us a connection between $(n,\ell)$-extremal graphs and pendant graphs, in support of Conjecture \ref{conjecture}. We use $\Delta(G)$ to denote the maximum degree of a graph $G$.

\begin{thm}\label{thm:maxdegree2}
Let $\ell$ be even, and let $G$ be an $(n,\ell)$-extremal graph of even order with $\Delta(\overline{G}) \le 2$.  Then each connected component of $\overline{G}$ is either $P_2$ or $P_4$.
\end{thm}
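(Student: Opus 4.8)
The plan is to combine the structural restrictions already established (Corollaries \ref{pathrestrictions} and \ref{extdom}, Lemma \ref{notswin}, Lemma \ref{cyclenotaw}) with the lower bound of Proposition \ref{prop:lowerbound} to squeeze out everything except $P_2$ and $P_4$ components. Since $\Delta(\overline{G}) \le 2$, every component of $\overline{G}$ is a path or a cycle. First I would rule out cycle components: by Lemma \ref{notswin}(\ref{notswineven}), if $\overline{G}$ has an even cycle component then $\overline{G}$ (hence any graph on $V(G)$ containing it) has a labeling $\pi$ with $\pi_s$ never $A$-winnable; since an extremal graph with $\Delta(\overline{G})\le 2$ must have a pendant vertex in its complement (a near-matching-type argument, or invoke that $\overline{G}$ cannot consist solely of cycles — a single odd cycle $C_n$ is not $A$-AW by Lemma \ref{cyclenotaw}, so via Theorem \ref{subsetjoindom} it cannot arise, and more than one cycle is killed by Lemma \ref{notswin}(\ref{notswinplural})), Theorem \ref{pendantremove}(\ref{penneighbor})(\ref{allswinnable}) fails and $\overline G$ is not $N$-AW. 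So I would argue there can be at most one cycle component and it must be odd, and then show even this is impossible: with one odd cycle $C$ and the rest paths, I would either use that $C$ together with a pendant vertex from a path component again violates Theorem \ref{pendantremove}(\ref{penneighbor}), or note that replacing $C$ by a pendant forest of the same order and smaller size (possible since $C$ has $m=n$ while a pendant forest on those vertices has $m<n$) would increase the size of $\overline{G}$ while preserving $N$-winnability — but here I must check the toggling-number conditions, which is where Corollary \ref{extswitch} is the natural tool, provided $C$ were $A$-AW; since it is not (Lemma \ref{cyclenotaw}), the cleaner route is to show $\overline G$ with a cycle component is simply not $N$-AW.

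Having eliminated all cycles, $\overline{G}$ is a disjoint union of paths, i.e.\ a pendant forest, so $\overline G$ is the complement-of-a-pendant-graph situation and Corollary \ref{pathrestrictions} applies directly: no component is $P_k$ with $k \equiv 3 \pmod 4$ (part \ref{nopath3}), at most one component is $P_k$ with $k \equiv 1 \pmod 4$ (part \ref{onepath4}), and no component has order exceeding $4$ (part \ref{extpathbound}, using extremality). The last statement already forbids $P_5, P_6, \dots$; combined with part \ref{nopath3} it forbids $P_3$; and part \ref{onepath4} allows at most one $P_1$. So the components lie in $\{P_1, P_2, P_4\}$ with at most one $P_1$. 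It remains to rule out a $P_1$ component entirely: if $\overline G$ has a $P_1$ component, then $G$ has a dominating vertex, and the $P_2$ or $P_4$ components of $\overline G$ (there must be at least one such component with a pendant vertex, since $n \ge 4$ and we can't have $\overline G = P_1 \cup \cdots \cup P_1$) — wait, a $P_2$ component has a pendant vertex not isolated, so Corollary \ref{extdom} does not immediately fire on $P_2$; instead I would argue that a graph with a dominating vertex whose complement has a $P_1$ component is handled by noting $\overline{G} - (\text{the } P_1) $ must be $A$-AW by Theorem \ref{subsetjoindom}, but a pendant forest on the remaining vertices with $c'$ components is $N$-AW iff $\gcd(2c'-1,\ell)=1$ (Corollary \ref{2c-1}); I would compare the size of $\overline G = P_1 \cup F$ against the pendant forest $F'$ of the same total order with one fewer component obtained by attaching the $P_1$ to $F$ as an extra pendant (turning some $P_2$ into $P_3$ — no, that creates $P_3$; instead merge the $P_1$'s vertex into a new edge, reducing component count), using Corollary \ref{2c-1} to check this preserves winnability while the complement gains an edge, contradicting extremality. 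The cleanest packaging: any extremal pendant-forest-complement with a $P_1$ component can be replaced by one without, of larger complement-size and the same winnability.

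The main obstacle I anticipate is the cycle-elimination step — specifically handling a single odd cycle component cleanly. Lemma \ref{notswin} only directly gives a non-$A$-winnable obstruction when there is an even cycle or two cycles; for one odd cycle plus paths, I expect the argument must go through Theorem \ref{pendantremove}(\ref{penneighbor}): pick the pendant vertex $p$ inside a path component, and observe that in condition (\ref{allswinnable}) the restriction of $\pi_s$ to the odd cycle $C$ ranges (by Lemma \ref{labelingstuff}(\ref{strans})) over labelings $\lambda_{a-s,b}$ or $\lambda_{a,b-s}$, and one can choose $a,b$ of opposite parity so that for every $s$ exactly one coordinate has the "wrong" parity — then Lemma \ref{labelingstuff}(\ref{winlambda}) shows $\pi_s$ is never $A$-winnable, so \ref{allswinnable} fails and $\overline G$ is not $N$-AW. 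This requires care because for $n\equiv 1,3 \pmod 4$ the winnability condition on $\lambda_{a,b}$ is "same parity," so I need the shift to break parity agreement for all $s$ simultaneously — choosing $a,b$ with $a+b$ odd does it when the shift affects only one coordinate, which is exactly the odd-length case. Once that lemma-level fact is in hand, the rest is bookkeeping with the already-proven corollaries.
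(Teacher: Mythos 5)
There is a genuine gap at the step you yourself flagged as the main obstacle: eliminating a single odd cycle component. Your proposed mechanism is to choose $a,b$ of opposite parity on the odd cycle $C$ and conclude from Lemma~\ref{labelingstuff} that $(\lambda_{a,b})_s$ is never $A$-winnable. But this is false: for $|C|$ odd, Lemma~\ref{labelingstuff}(\ref{strans}) reduces $(\lambda_{a,b})_s$ to $\lambda_{a-s,b}$ or $\lambda_{a,b-s}$, whose coordinate sum is $a+b-s$; by Lemma~\ref{labelingstuff}(\ref{winlambda}) this is winnable exactly when $a+b-s$ is even, so taking $s\equiv a+b \pmod 2$ \emph{restores} parity agreement and the labeling becomes winnable. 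This is precisely why Lemma~\ref{notswin} is stated only for an even cycle or for two cycles --- it genuinely does not extend to one odd cycle, and condition (\ref{allswinnable}) of Theorem~\ref{pendantremove} does not fail there. The paper closes this case by an order-parity argument instead: it first pins down the path components (no $P_3$ or $P_{\geq 5}$ by Corollary~\ref{pathrestrictions}, at most one $P_1$ by twins, and a lone $P_1$ excluded because Theorem~\ref{subsetjoindom} would force every other component to be $A$-AW, which Lemma~\ref{cyclenotaw} forbids for cycles, leaving only even-order components plus one $P_1$ and hence odd $n$). Once every path component is $P_2$ or $P_4$, a single odd cycle would force $|V(\overline{G})|=n$ to be odd, contradicting the hypothesis that $n$ is even. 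Note the order of operations matters: you must settle the paths before the lone odd cycle, since odd-order paths would spoil the parity count.

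Two smaller issues. First, your argument that $\overline{G}$ cannot consist solely of cycles misapplies Theorem~\ref{subsetjoindom} (which needs a $K_1$ component in $\overline{G}$, not present here); the paper instead observes that all-cycles forces $|E(\overline{G})|=n$, while the complement of a pendant tree has $|E|=n-1$ and is $N$-AW for all $\ell$ by Corollary~\ref{2c-1}, contradicting extremality. Second, your $P_1$-elimination by ``attaching the $P_1$ to $F$'' runs the wrong way: adding an edge to $\overline{G}$ \emph{decreases} the size of $G$, so no contradiction with extremality results; the parity-of-order argument above is what actually works. The remainder of your path analysis (Corollary~\ref{pathrestrictions} ruling out $P_3$ and $P_{\geq 5}$, twins ruling out two $P_1$'s) matches the paper and is fine.
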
 

\begin{proof}
Suppose $\Delta(\overline{G})=1$. All dominating vertices in $G$ are $N$-twins so by Corollary \ref{twins} $G$ has at most $1$ dominating vertex.  However, $G$ can not have only one dominating vertex since $G$ has even order. Thus, $G$ has no dominating vertices, and so $\overline{G}$ has no isolated vertices.   It follows that each connected component of $G$ is $P_2$. 

In the case $\Delta(\overline{G})=2$, we first prove that $\overline{G}$ has at least one path component.  If not, all connected components are cycles, and so $|E(\overline{G})|=|V(\overline{G})|$.  However, note that any pendant tree of order $|V(G)|$ is $N$-AW for all $\ell$ by Corollary \ref{2c-1}.  Since the pendant tree has size $|V(G)|-1$, this implies that $G$ is not $(n,\ell)$-extremal.  Thus, $\overline{G}$ has at least one path component (possibly $P_1$).

By Corollary~\ref{pathrestrictions}, no component of $\overline{G}$ is $P_k$ with $k \ge 5$ or $k=3$. Furthermore two components of $P_1$ in $\overline{G}$ would be $N$-twins in $G$, which is prohibited by Corollary \ref{twins}. If we have one component of $P_1$, Theorem~\ref{subsetjoindom} implies that all other connected components of $\overline{G}$ are $A$-AW.  This excludes cycles by Lemma~\ref{cyclenotaw}.  Since the remaining paths have even order, this would force $G$ to have odd order, which is a contradiction.

So $G$ is $N$-AW and $\overline{G}$ has a pendant tree ($P_2$ or $P_4$) as a component. Thus, $\overline{G}$ has a pendant vertex, so we can use Theorem~\ref{pendantremove}(\ref{penneighbor}). 
This implies that $\overline{G}$ is $(A,\ell,s)$-winnable for some $s \in \mathbb{Z}_\ell$.  However, Lemma~\ref{notswin} implies that this can not happen if either $\overline{G}$ has more than one cycle component or if $\overline{G}$ has a cycle component of even order.  Moreover, if $\overline{G}$ has precisely one cycle component, and if that connected component has odd order, this implies that $G$ has odd order, which is a contradiction.  Thus, $\overline{G}$ has no cycle components, and so each connected component is either $P_2$ or $P_4$, which completes the proof.
\end{proof}

Note that the $(n,\ell)$-extremal graphs given in Theorem \ref{thm:maxdegree2} are pendant graphs. By Lemma \ref{pendantN} $kP_4 \cup \frac{n-4k}{2}P_2$ is $N$-AW if and only if $\gcd(n-2k-1,\ell)=1$. This implies Conjecture \ref{conjecture} for the family of graphs which have minimum degree at least $n-3$.

 \subsection{Extremal Graphs with ${n}\choose{2}$$ - (\frac{n}{2}+k)$ edges}\label{smallk}
 
In this section we prove Conjecture \ref{conjecture} for $0\leq k \leq 3$.  We state Theorem \ref{thm:extremalpendant} in the language of that conjecture.

\begin{thm}\label{thm:extremalpendant}
For $n,\ell$ even and $0\leq k \leq 3$
\[\max(n,\ell)=\binom{n}{2} - \left(\frac{n}{2} + k\right)\]
where $k$ is the smallest nonnegative integer such that $\gcd(n-2k-1,\ell)=1$. In each case the $(n,\ell)$-extremal graphs are precisely the complements of pendant graphs of order $n$ that have size $\binom{n}{2} - \left(\frac{n}{2} + k\right)$. 
\end{thm}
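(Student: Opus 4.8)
## Proof Proposal for Theorem~\ref{thm:extremalpendant}

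The plan is to proceed by induction on $k$, establishing for each value $0 \le k \le 3$ both the value of $\max(n,\ell)$ and the classification of extremal graphs. The lower bound $\max(n,\ell) \ge \binom{n}{2} - (\frac{n}{2}+k)$ is already given by Proposition~\ref{prop:lowerbound}, and by Lemma~\ref{pendantN} every complement of a pendant graph of order $n$ and size $\binom{n}{2}-(\frac{n}{2}+k)$ is $N$-AW precisely when $\gcd(n-2k-1,\ell)=1$; so the real content is the upper bound together with showing that \emph{only} complements of pendant graphs achieve it. The base cases $k=0$ and $k=1$ are essentially Propositions~\ref{evenmatch} and~\ref{prop:triangle} combined with the observation that for $\ell$ even the $C_3$-based graph of Proposition~\ref{prop:triangle} is not $N$-AW (by Lemma~\ref{cyclenotaw}, since $C_3$ is not $A$-AW when $\ell$ is even), so when $n,\ell$ are even and $k=1$ the extremal graph must be a complement of a pendant graph, namely $\overline{P_4 \cup \frac{n-4}{2}P_2}$.

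For the inductive step with $k \in \{2,3\}$: suppose $k$ is minimal with $\gcd(n-2k-1,\ell)=1$, so $\gcd(n-2j-1,\ell)\ne 1$ for all $j < k$. Let $G$ be $(n,\ell)$-extremal; I must show $|E(\overline{G})| = \frac{n}{2}+k$ and that $\overline{G}$ is a pendant graph. By Proposition~\ref{bestcase} and the fact that fewer than $\frac{n}{2}+k$ edges in $\overline{G}$ forces (by the argument of Proposition~\ref{bestcase} and the exclusion of smaller pendant-graph sizes via minimality of $k$) either too few edges or a non-winnable configuration, the size of $\overline{G}$ is pinned down; the harder half is the structure. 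The strategy is to control $\Delta(\overline{G})$: if $\Delta(\overline{G}) \le 2$, Theorem~\ref{thm:maxdegree2} already forces each component to be $P_2$ or $P_4$, hence $\overline{G}$ is a pendant graph, and counting components via Corollary~\ref{2c-1} (or Lemma~\ref{pendantN}) pins down the number of $P_4$'s as exactly $k$. So it suffices to rule out $\Delta(\overline{G}) \ge 3$. Here I would use Corollary~\ref{extdom}: if $G$ has a dominating vertex (equivalently $\overline{G}$ has an isolated vertex) and $\overline{G}$ has a pendant vertex not in a $P_2$ component, $G$ is not extremal; combined with Corollary~\ref{pathrestrictions} and Corollary~\ref{extswitch}, which lets me replace any $A$-AW component by a smaller-size $A$-AW component with the same order and same value of $T^{A}_{V}(1)$, I can argue that any component of $\overline{G}$ of maximum degree $\ge 3$ and small size (at most $\frac{n}{2}+k \le \frac{n}{2}+3$ edges distributed among few components) can be replaced by a disjoint union of paths of the same order but strictly smaller size without destroying $N$-winnability, contradicting extremality. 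Because $k \le 3$ the number of "excess" edges beyond a perfect matching is at most $3$, so there are only a bounded number of component-types to examine (a single component with one extra edge is a path of odd order, ruled out by Corollary~\ref{pathrestrictions}(1) or twin arguments; a component with two or three extra edges has very restricted structure — a near-unicyclic or bicyclic graph on few vertices, or an odd cycle), and each can be dispatched by combining Corollary~\ref{extswitch} with the adjacency-game computations of Lemma~\ref{pendwin} and Lemma~\ref{labelingstuff}.

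The main obstacle I anticipate is the finite case analysis for $\Delta(\overline{G}) \ge 3$: I must enumerate, up to the at-most-three extra edges, all possible "dense" components (those not decomposable into paths) and verify for each that either it already fails to be $A$-AW (so by Theorem~\ref{pendantremove}(\ref{allswinnable}) or Lemma~\ref{notswin} the whole graph fails $N$-AW unless some contradictory parity condition on $n$ holds), or it is $A$-AW but can be traded down in size via Corollary~\ref{extswitch}. Cycle components are handled by Lemma~\ref{cyclenotaw} and Lemma~\ref{notswin}; the genuinely new computations are for the handful of small graphs with $\Delta = 3$ and one to three independent cycles, where I would row-reduce the adjacency matrix over $\mathbb{Z}_\ell$ (as was done for $C_3$ in Proposition~\ref{prop:triangle}) to determine $A$-AW-ness and, when $A$-AW, compute $T^{A}_{V}(1)$ explicitly to check the hypotheses of Corollary~\ref{extswitch}. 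Once every such component is eliminated, we are left with $\Delta(\overline{G}) \le 2$, Theorem~\ref{thm:maxdegree2} finishes the structure, and the count of $P_4$-components equals $k$ by the minimality of $k$ together with Lemma~\ref{pendantN}, completing the proof.
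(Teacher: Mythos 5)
Your overall strategy coincides with the paper's: the lower bound comes from complements of pendant graphs (Proposition~\ref{prop:lowerbound}, Lemma~\ref{pendantN}), the classification is obtained by forcing $\Delta(\overline{G})\le 2$ so that Theorem~\ref{thm:maxdegree2} applies, and the residual dense components are eliminated by twins, Corollary~\ref{extdom}, and Corollary~\ref{extswitch}. But there are two genuine gaps. First, your $k=1$ base case does not work as stated: Proposition~\ref{prop:triangle} concerns $\ell$ odd, and ruling out the one graph $\overline{C_3 \cup \left(\frac{n-4}{2}\right)P_2 \cup K_1}$ says nothing about the other graphs $G$ with $|E(\overline{G})|=\frac{n}{2}+1$; you must show \emph{every} $N$-AW graph of that size has $\overline{G}=P_4\cup\left(\frac{n}{2}-2\right)P_2$. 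The paper does this (Proposition~\ref{prop:plus1}) via Lemma~\ref{degreebound}, a twin-counting argument showing that an $N$-AW graph with $|E(\overline{G})|=\frac{n}{2}+t$ satisfies $\Delta(\overline{G})\le t+1$; for $t=1$ this gives $\Delta(\overline{G})\le 2$ outright. You never state or establish such a degree bound, and it is not a formal consequence of ``only $k$ excess edges'': a priori a vertex of $\overline{G}$ could have degree as large as $2k+2$, and without the bound the enumeration you propose for $k=2,3$ is not visibly finite and small.

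Second, for $k=2,3$ the enumeration you defer \emph{is} the proof. Propositions~\ref{prop:plus2} and~\ref{prop:plus3} list all admissible degree sequences of $\overline{G}$ (six for $k=2$, twelve for $k=3$), realize each, and dispatch each realization by one of several distinct mechanisms: $N$-twins, Lemma~\ref{lem:dvertices}, Corollary~\ref{extdom}, explicit row reduction (the bowtie graph is $A$-AW only for $\ell$ odd; the house graph and $K_{2,3}$ never yield $N$-AW complements), a labeling no uniform shift of which is $A$-winnable (feeding into Theorem~\ref{pendantremove}), or Corollary~\ref{extswitch} with an explicitly computed toggling number matched against a pendant forest of the same order. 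Your blanket claim that any component with $\Delta\ge 3$ ``can be replaced by a disjoint union of paths of the same order but strictly smaller size'' is not how several of these cases actually go --- some components are killed because they are never winnable rather than traded down --- and where \ref{extswitch} is invoked the equality of the $T^{A}_{V}(1)$ values must genuinely be computed (the paper's Appendix exists for exactly this). So the proposal is a faithful outline of the paper's route, but the $k=1$ argument needs repair via the missing degree bound, and the case analysis that constitutes the theorem is absent.
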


We will prove this result  using separate propositions for each $k$. When $k=0$, Proposition \ref{evenmatch} implies Theorem \ref{thm:extremalpendant}.  The following lemma will help us for the cases $1\leq k\leq 3$. 

\begin{lem} \label{degreebound}
Let $n \in \mathbb{N}$ be even, let $\ell\in\mathbb{N}$, and let $G$ be a $N$-AW graph with $|E(\overline{G})|=\frac{n}{2}+t$, where $t \ge 1$. Then  $\Delta(\overline{G})\leq t+1$ where $\Delta(\overline{G})$ is the maximum degree of $\overline{G}$.
\end{lem}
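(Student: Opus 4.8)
The plan is to argue by contradiction: suppose $G$ is $N$-AW with $|E(\overline{G})| = \frac{n}{2} + t$ but $\overline{G}$ has a vertex $w$ of degree $d \ge t+2$. The strategy is to count how many vertices of $G$ can fail to be dominating vertices, show there are at least two such, and then invoke Corollary~\ref{twins} to conclude $G$ is not $N$-AW.

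First I would observe that a vertex $v$ of $G$ is a dominating vertex precisely when $v$ is isolated in $\overline{G}$. So I want to bound the number of non-isolated vertices of $\overline{G}$. The edges of $\overline{G}$ not incident to $w$ number $\frac{n}{2} + t - d \le \frac{n}{2} + t - (t+2) = \frac{n}{2} - 2$, and these touch at most $2(\frac{n}{2}-2) = n - 4$ vertices. Adding the (at most $d+1$) vertices in the closed neighborhood of $w$ in $\overline{G}$ seems to overshoot, so instead I would count the non-isolated vertices directly via degree sum: $\sum_{v} \deg_{\overline{G}}(v) = 2|E(\overline{G})| = n + 2t$. The vertices incident to an edge of $\overline{G}$ each contribute at least $1$ to this sum, and $w$ contributes $d \ge t+2$; hence the number of non-isolated vertices other than $w$ is at most $n + 2t - d \le n + 2t - (t+2) = n + t - 2$. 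Wait — that bound is too weak when $t$ is large, so I would instead use that the non-isolated vertices besides $w$ that are not neighbors of $w$ are covered by edges avoiding $w$, of which there are at most $\frac{n}{2} - 2$, hence at most $n-4$ such vertices; together with $w$ and its $\le d$ neighbors this gives at most $(n-4) + 1 + d$ non-isolated vertices — again not obviously $\le n-2$. The cleanest route is the first one refined: every edge incident to $w$ contributes a neighbor, but each non-$w$ endpoint could be double-counted only if it is also hit by another edge; ignoring that, the $d$ edges at $w$ plus the $\le \frac{n}{2}+t-d$ other edges cover at most $d + 1 + 2(\frac{n}{2}+t-d) = n + 2t + 1 - d \le n + t - 1$ vertices. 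So when $t+1 \le 2$, i.e. $t=1$, this gives $\le n$, which is not sharp enough either.

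I think the correct and sharp argument is this: let $d = \Delta(\overline{G}) \ge t+2$ and let $w$ attain it. The edges of $\overline{G}$ not incident to $w$ number exactly $\frac{n}{2}+t-d$. These edges, together with $w$ and its $d$ neighbors, cover all non-isolated vertices, so the number of isolated vertices of $\overline{G}$ is at least
\[
n - (d+1) - 2\left(\tfrac{n}{2}+t-d\right) = d - 2t - 1 \ge (t+2) - 2t - 1 = 1 - t.
\]
This is only useful for $t \le 0$, confirming that the naive covering bound is genuinely insufficient and that the lemma must exploit more structure — presumably that the $\frac{n}{2}+t-d$ non-$w$-edges cannot all be disjoint from each other and from $N[w]$ once $d$ is forced large, or an averaging argument over which vertices are forced to be non-dominating. \textbf{The main obstacle} will be extracting, from the single inequality $d \ge t+2$ and $|E(\overline{G})| = \frac{n}{2}+t$, the existence of \emph{two} isolated vertices in $\overline{G}$ (equivalently two dominating $N$-twins in $G$); I expect one must argue that a high-degree vertex $w$ "uses up" $d$ of the edge budget very inefficiently, leaving at most $\frac{n}{2}+t-d \le \frac{n}{2}-2$ edges to cover the remaining $n-1-\deg$-many vertices, and then bound the covered count more carefully — perhaps splitting on whether $w$'s neighbors have further edges — to force at least two uncovered (isolated) vertices, at which point Corollary~\ref{twins} gives the contradiction and hence $\Delta(\overline{G}) \le t+1$.
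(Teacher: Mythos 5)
There is a genuine gap here, and you have essentially diagnosed it yourself: every counting bound you try collapses, and you end by conjecturing what the real argument ``must'' do. Unfortunately the target you settle on --- forcing \emph{two isolated vertices} in $\overline{G}$ so that $G$ has two dominating $N$-twins --- is not the right one and cannot work in general. $N$-twins in $G$ need not be dominating vertices: if $\overline{G}$ is a star $K_{1,n-1}$, it has no isolated vertices at all, yet $G$ is far from $N$-AW because the leaves of the star are pairwise $N$-twins in $G$ (each is a pendant vertex of $\overline{G}$ attached to the same center). So an argument that only hunts for isolated vertices of $\overline{G}$ misses the obstruction entirely, and no refinement of your covering counts will produce two isolated vertices, since they need not exist.

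The paper's proof uses the twin-free hypothesis twice, in two different ways, and never argues by covering. Fix $v$ with $\deg_{\overline{G}}(v)=d$, let $W=V(\overline{G})-N_{\overline{G}}[v]$, and split $E(\overline{G})$ into the $d$ edges at $v$, the $k$ edges inside $N_{\overline{G}}(v)$, the $r$ edges inside $W$, and the $s$ edges between, so $k+r+s=\frac{n}{2}+t-d$. Twin-freeness says at most one neighbor of $v$ can be a pendant of $\overline{G}$ hanging off $v$ (two such pendants are $N$-twins in $G$), which forces $d-1\le 2k+s$; and at most one vertex of $W$ can be isolated in $\overline{G}$ (two would be dominating $N$-twins in $G$), which forces $n-d-2\le 2r+s$. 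Chaining these gives $n-2k-2r-2s\le 3$, hence $\le 2$ by parity, i.e. $k+r+s\ge \frac{n}{2}-1$, and substituting the edge count yields $d\le t+1$. The step you were missing is precisely the first of these two inequalities --- using pendant neighbors of the high-degree vertex, not isolated vertices, as the source of twins --- together with the parity improvement at the end.
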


\begin{proof}
We let $v \in V(\overline{G})$ and show $\deg(v) \le t+1$, where $\deg(v)$ is the degree of $v$ in $\overline{G}$.  Let $W=V(\overline{G})-N_{\overline{G}}[v]$, and note that $\deg(v)=|N_{\overline{G}}(v)|$.  Then $|W|=n-\deg(v)-1$.  In the graph $\overline{G}$, let $k$ be the number of edges incident only to vertices in $N_{\overline{G}}(v)$, let $r$ be the number of edges incident only to vertices in $W$, and let $s$ be the number of edges between a vertex in $N_{\overline{G}}(v)$ and a vertex in $W$.  Since $|E(\overline{G})|=\frac{n}{2}+t$, we have $\frac{n}{2}+t = \deg(v)+k+r+s$, and so $k+r+s = \frac{n}{2}+t-\deg(v)$.

Since $G$ is $N$-AW, it can not have any $N$-twins.  Thus, no vertices in $N_{\overline{G}}(v)$ can be $N$-twins, so we can have at most one vertex in $N_{\overline{G}}(v)$ that is adjacent in $G$ to every vertex except $v$. In other words, there are at least $\deg(v)-1$ vertices in $W$ that are adjacent in $\overline{G}$ to vertices other than $v$.  There can be at most two such vertices for each of the $k$ edges in $\overline{G}$ incident with two vertices in $N_{\overline{G}}(v)$, and at most one such vertex for each of the $s$ edges between vertices in $N_{\overline{G}}(v)$ and $W$.  This means that there are at most $2k+s$ such vertices in $N_{\overline{G}}(v)$.  It follows that $\deg(v)-1 \le 2k+s$, and so $\deg(v) \leq 2k+s+1$.

In order to prevent any vertices in $W$ from becoming $N$-twins, we can have at most one vertex in $W$ that is adjacent to every vertex in $G$.  In other words, there are at least $|W|-1 = n-\deg(v)-2$ vertices in $W$ with nonzero degree in $\overline{G}$.  Similar reasoning as in the previous paragraph implies that there are at most $2r+s$ such vertices in $W$, and so $n-\deg(v)-2 \le 2r+s$.  Thus, $\deg(v) \geq n-2r-s-2$.

Since we have $n-2r-s-2 \leq \deg(v) \leq 2k+s+1$, it follows that $n-2r-s-2 \leq 2k+s+1$.  This gives us $n-2k-2r-2s \leq 3$.  Since the left side of the equation is even, this actually gives us $n-2k-2r-2s \leq 2$, and so $\frac{n}{2}-k-r-s \leq 1$.  Rearranging this a bit gives us $k+r+s \geq \frac{n}{2}-1$.

Now we use the fact $k+r+s=\frac{n}{2}+t-\deg(v)$ to get $\frac{n}{2}+t-\deg(v) \geq \frac{n}{2}-1$.  Solving for $\deg(v)$ gives deg$(v) \leq t+1$.
\end{proof}

In the next proposition, we resolve the $k=1$ case of Theorem \ref{thm:extremalpendant}.

\begin{prop}\label{prop:plus1}
Suppose that $n$ and $\ell$ are even and $n\ge 4$.  Then 
\begin{center}
$\max(n,\ell)=\binom{n}{2} - \left( \frac{n}{2}+1 \right)$ if and only if $\gcd(n-1,\ell)\neq 1$ and $\gcd(n-3,\ell)=1$.
\end{center}

\noindent Moreover, the only $(n,\ell)$-extremal graph in this case is the complement of the unique pendant graph of order $n$ and size $\binom{n}{2}-\left(\frac{n}{2} + 1\right)$, which is $\overline{P_4\cup \left( \frac{n}{2}-2 \right) P_2}$.
\end{prop}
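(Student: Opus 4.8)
The plan is to prove both directions of the equivalence together with the uniqueness claim, using Lemma~\ref{degreebound} to force $\Delta(\overline G)\le 2$, Theorem~\ref{thm:maxdegree2} for the structure of $\overline G$, and Corollary~\ref{2c-1} for the final gcd condition. For the backward direction, suppose $\gcd(n-1,\ell)\neq 1$ and $\gcd(n-3,\ell)=1$. For the upper bound, Proposition~\ref{bestcase} gives $\max(n,\ell)\le\binom{n}{2}-\frac n2$, and Proposition~\ref{evenmatch} says equality here would force $\gcd(n-1,\ell)=1$; since that is excluded, $\max(n,\ell)\le\binom{n}{2}-\left(\frac n2+1\right)$. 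For the matching lower bound, note that $\gcd(n-1,\ell)\neq 1$ rules out $k=0$ while $\gcd(n-3,\ell)=1$ achieves $k=1$, so $k=1$ is the least nonnegative integer with $\gcd(n-2k-1,\ell)=1$, and Proposition~\ref{prop:lowerbound} yields $\max(n,\ell)\ge\binom{n}{2}-\left(\frac n2+1\right)$; concretely the witness is $\overline{P_4\cup\left(\frac n2-2\right)P_2}$, whose complement is a pendant forest with $\frac n2-1$ components and hence is $N$-AW by Corollary~\ref{2c-1} precisely because $\gcd\!\left(2\!\left(\frac n2-1\right)-1,\ell\right)=\gcd(n-3,\ell)=1$.

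For the forward direction, assume $\max(n,\ell)=\binom{n}{2}-\left(\frac n2+1\right)$. If $\gcd(n-1,\ell)=1$, then Proposition~\ref{evenmatch} would give $\max(n,\ell)=\binom{n}{2}-\frac n2$, a contradiction, so $\gcd(n-1,\ell)\neq 1$. Now let $G$ be any $(n,\ell)$-extremal graph, so $G$ is $N$-AW with $|E(\overline G)|=\frac n2+1$. Lemma~\ref{degreebound} with $t=1$ gives $\Delta(\overline G)\le 2$, which is exactly the hypothesis needed to invoke Theorem~\ref{thm:maxdegree2}: every component of $\overline G$ is $P_2$ or $P_4$. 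Writing $\overline G=aP_4\cup bP_2$, the equations $4a+2b=n$ and $3a+b=\frac n2+1$ force $a=1$ and $b=\frac n2-2$ (with $b\ge 0$ since $n\ge 4$). Thus $\overline G=P_4\cup\left(\frac n2-2\right)P_2$ is a pendant forest with $c=\frac n2-1$ components, so Corollary~\ref{2c-1} shows the $N$-AW-ness of $G$ forces $\gcd(2c-1,\ell)=\gcd(n-3,\ell)=1$, completing the equivalence. Since this same computation pinned down $\overline G$ uniquely, it also yields the uniqueness statement: the only $(n,\ell)$-extremal graph is $\overline{P_4\cup\left(\frac n2-2\right)P_2}$.

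I do not expect a deep obstacle here, since Theorem~\ref{thm:maxdegree2} and Lemma~\ref{degreebound} have already done the structural heavy lifting and what remains is a short counting argument plus one application of Corollary~\ref{2c-1}. The one point requiring care is that the reduction to $\Delta(\overline G)\le 2$ needs $|E(\overline G)|$ to be \emph{exactly} $\frac n2+1$, i.e.\ that $G$ be extremal rather than merely $N$-AW, so the forward argument must first note that $\max(n,\ell)=\binom{n}{2}-\left(\frac n2+1\right)$ guarantees such a $G$ exists. It is also worth checking the degenerate case $n=4$, where $b=0$, the relevant pendant forest is just $P_4$, and the gcds involving $n-3=1$ trivialize; the argument goes through unchanged.
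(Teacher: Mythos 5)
Your proposal is correct and follows essentially the same route as the paper: Proposition~\ref{evenmatch} to rule out $k=0$, Lemma~\ref{degreebound} to get $\Delta(\overline G)\le 2$, Theorem~\ref{thm:maxdegree2} to reduce to $P_2$ and $P_4$ components, and a counting argument pinning down $\overline G=P_4\cup\left(\frac n2-2\right)P_2$. The only cosmetic difference is that you invoke Corollary~\ref{2c-1} (and Proposition~\ref{prop:lowerbound}) where the paper uses Lemma~\ref{pendantN} directly; these yield the identical gcd condition.
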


\begin{proof}

Suppose $\gcd(n-1,\ell)\neq 1$ and $\gcd(n-3,\ell)=1$. Consider $H=P_4 \cup \left( \frac{n}{2} -2 \right) P_2$. Note that $H$ is a pendant graph with $n$ vertices and $\frac{n}{2} + 1$ edges.  By Lemma \ref{pendantN}, $\overline{H}$ is $N$-AW if and only if $\gcd(n-3,\ell)=1$. Since $\gcd(n-1,\ell)\neq 1$ it follows from Proposition \ref{evenmatch} that $\max(n,\ell)=\binom{n}{2}-\left( \frac{n}{2} + 1 \right)$.

Suppose $\max(n,\ell)=\binom{n}{2}-\left( \frac{n}{2}+1 \right)$. Then $\gcd(n-1,\ell)\neq 1$, since otherwise $\max(n,\ell) = \binom{n}{2}-\frac{n}{2}$ by Proposition \ref{evenmatch}.  By Lemma \ref{degreebound}, if $G$ is $N$-AW with $|E(\overline{G})|=\frac{n}{2}+1$  then $\Delta(\overline{G})\leq 2$. So by Theorem \ref{thm:maxdegree2} each connected component of $\overline{G}$ is either $P_2$ or $P_4$. The only such graph with $\frac{n}{2}+1$ edges is $H$. Thus $\gcd(n-3,\ell)=1$. It is clear that $H$ is the only pendant graph of order $\frac{n}{2}+1$.
\end{proof}



In the next proposition, we resolve the $k=2$ case of Theorem \ref{thm:extremalpendant}. The proof considers the possible degree sequences of the complements of $(n,\ell)$-extremal graphs. To ease our explanation we introduce a notation. Let a \emph{$d$-vertex} refer to a vertex of degree $d$. A \emph{$d^+$-vertex} is a vertex of degree $d$ or more. 

\begin{lem}\label{lem:dvertices}
Suppose $G$ is an $N$-AW graph. Then any $d$-vertex in $\overline{G}$ with $d\geq 2$ must have at least $d-1$ neighbors that are $2^+$-vertices.
\end{lem}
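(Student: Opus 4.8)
The plan is to rule out the dangerous configuration directly: if a $d$-vertex $v$ of $\overline{G}$ had two neighbors of degree $1$, then $G$ would contain $N$-twins, contradicting Corollary~\ref{twins}. So I would let $v$ be a vertex of $\overline{G}$ with $\deg_{\overline{G}}(v)=d\geq 2$ and suppose, toward a contradiction, that $v$ has two distinct neighbors $p_1,p_2$ in $\overline{G}$, each of degree $1$ in $\overline{G}$.

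First I would observe that since $p_1$ and $p_2$ are pendant vertices of $\overline{G}$ adjacent to $v$, their unique neighbor in $\overline{G}$ is $v$; that is, $N_{\overline{G}}(p_1)=N_{\overline{G}}(p_2)=\{v\}$ (and $v\notin\{p_1,p_2\}$ since the graph is simple). Consequently $N_{\overline{G}}(p_1)\setminus\{p_1,p_2\}=N_{\overline{G}}(p_2)\setminus\{p_1,p_2\}=\{v\}$, so $p_1$ and $p_2$ are twins in $\overline{G}$ in the graph-theoretic sense recalled in the paper.

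The key step is to transfer this to $G$ and recognize the pair as $N$-twins. The twin relation is invariant under complementation: for any vertices $x,y$, one has $N_G(x)\setminus\{x,y\}=N_G(y)\setminus\{x,y\}$ if and only if $N_{\overline{G}}(x)\setminus\{x,y\}=N_{\overline{G}}(y)\setminus\{x,y\}$, since passing to the complement just replaces each of these sets by its complement inside $V(G)\setminus\{x,y\}$. Hence $p_1$ and $p_2$ are twins in $G$ as well. Moreover, because $p_1p_2\notin E(\overline{G})$ we have $p_1p_2\in E(G)$, so $p_1$ and $p_2$ are \emph{adjacent} twins in $G$; as noted in the paper, adjacent twins produce identical rows and columns in the neighborhood matrix, so $p_1$ and $p_2$ are $N$-twins in $G$. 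By Corollary~\ref{twins}, $G$ is not $N$-AW, contradicting the hypothesis.

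Therefore $v$ has at most one neighbor of degree $1$ in $\overline{G}$. Since every neighbor of $v$ necessarily has degree at least $1$, among the $d$ neighbors of $v$ at least $d-1$ have degree at least $2$; that is, $v$ has at least $d-1$ neighbors that are $2^+$-vertices, as claimed. I do not expect any genuine obstacle: the only points requiring care are the complement-invariance of the twin relation and the fact that $p_1,p_2$ are adjacent in $G$ (which is precisely what makes them $N$-twins rather than $A$-twins), both of which are already part of the paper's framework.
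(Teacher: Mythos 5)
Your proposal is correct and follows exactly the paper's argument: the paper likewise observes that a $d$-vertex with fewer than $d-1$ neighbors of degree at least $2$ has two pendant neighbors in $\overline{G}$, which are $N$-twins in $G$, contradicting Corollary~\ref{twins}. You simply spell out in more detail the (routine) verification that the two pendant neighbors are adjacent twins in $G$ and hence $N$-twins, which the paper leaves implicit.
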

\begin{proof}
Suppose that $v$ is a $d$-vertex in $\overline{G}$ with $d\geq 2$ and that $v$ has fewer than $d-1$ neighbors that are $2^+$ vertices. Then $v$ has two neighbors of degree $1$ in $\overline{G}$, which results in $G$ having $N$-twins.  By Corollary \ref{twins}, $\overline{G}$ is not $N$-AW.
\end{proof}

\begin{prop}\label{prop:plus2}
Let $n,\ell\in\mathbb{N}$ be even and $n\geq 6$.  Then
\begin{center}
$\max(n,\ell)=\binom{n}{2}- \left( \frac{n}{2} + 2 \right)$ if and only if $\gcd(n-1,\ell)\neq 1$, $\gcd(n-3,\ell)\neq 1$, and $\gcd(n-5,\ell)=1$. 
\end{center}
Moreover, the $(n,\ell)$-extremal graphs in this case are precisely the complements of pendant graphs of order $n$ and size $\frac{n}{2}+2$: $\overline{(P_3 \astrosun K_1)\cup \frac{n-6}{2} P_2}$ and $\overline{2P_4\cup \frac{n-8}{2} P_2}$ with the latter only possible when $n\geq 8$.
\end{prop}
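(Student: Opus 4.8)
The plan is to prove the biconditional and the classification together, reusing the $k=0$ and $k=1$ results. For the lower bound, under the three gcd hypotheses the exponent $k=2$ is minimal with $\gcd(n-2k-1,\ell)=1$, so $\max(n,\ell)\ge\binom{n}{2}-\left(\frac{n}{2}+2\right)$ follows from Proposition~\ref{prop:lowerbound}; concretely, both $(P_3\astrosun K_1)\cup\frac{n-6}{2}P_2$ and, for $n\ge 8$, $2P_4\cup\frac{n-8}{2}P_2$ are pendant graphs of order $n$ and size $\frac{n}{2}+2$ whose complements are $N$-AW by Lemma~\ref{pendantN} precisely because $\gcd\!\left(2\left[n-\left(\frac{n}{2}+2\right)\right]-1,\ell\right)=\gcd(n-5,\ell)=1$. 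For the matching upper bound, $\gcd(n-1,\ell)\ne 1$ rules out $\max(n,\ell)=\binom{n}{2}-\frac{n}{2}$ via Proposition~\ref{evenmatch} and $\gcd(n-3,\ell)\ne 1$ rules out $\max(n,\ell)=\binom{n}{2}-\left(\frac{n}{2}+1\right)$ via Proposition~\ref{prop:plus1}, so Proposition~\ref{bestcase} forces $\max(n,\ell)\le\binom{n}{2}-\left(\frac{n}{2}+2\right)$.

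For the converse, assume $\max(n,\ell)=\binom{n}{2}-\left(\frac{n}{2}+2\right)$ and let $G$ be $(n,\ell)$-extremal; then $F:=\overline{G}$ has order $n$, size $\frac{n}{2}+2$, and $\overline{F}=G$ is $N$-AW. Propositions~\ref{evenmatch} and~\ref{prop:plus1} immediately give $\gcd(n-1,\ell)\ne 1$ and $\gcd(n-3,\ell)\ne 1$, so it remains to prove $\gcd(n-5,\ell)=1$ and to identify $F$. Lemma~\ref{degreebound} with $t=2$ gives $\Delta(F)\le 3$, and $\Delta(F)\ge 2$ since a graph with maximum degree at most $1$ has at most $n/2<\frac{n}{2}+2$ edges. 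When $\Delta(F)\le 2$, Theorem~\ref{thm:maxdegree2} says each component of $F$ is $P_2$ or $P_4$; as a $P_4$ contributes $3$ edges and a $P_2$ contributes $1$, having size $\frac{n}{2}+2$ forces exactly two $P_4$ components, so $F=2P_4\cup\frac{n-8}{2}P_2$ and $n\ge 8$.

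The substantive case is $\Delta(F)=3$. Writing $n_i$ for the number of degree-$i$ vertices of $F$, the edge count gives $n_2+2n_3=n_0+4$, and $n_0\le 1$ because two isolated vertices of $F$ would be $N$-twins in $G$ (Corollary~\ref{twins}). With $n_3\ge 1$ this leaves $(n_0,n_2,n_3)\in\{(0,2,1),(0,0,2),(1,3,1),(1,1,2)\}$. The case $(0,0,2)$ is impossible by Lemma~\ref{lem:dvertices}, since a degree-$3$ vertex must have two neighbors of degree $\ge 2$ while only one other such vertex exists. In the two cases with $n_0=1$ the graph $G$ has a dominating vertex, so Corollary~\ref{extdom} forces every pendant vertex of $F$ into a $P_2$ component; hence each non-$P_2$ component of $F$ has minimum degree $\ge 2$ and is supported on the at most three remaining higher-degree vertices, which contradicts the degree data (in $(1,3,1)$ the component of the degree-$3$ vertex would have the odd-sum degree sequence $(3,2,2,2)$; in $(1,1,2)$ a degree-$3$ vertex would have only two admissible neighbors). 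So $(n_0,n_2,n_3)=(0,2,1)$, and by Lemma~\ref{lem:dvertices} the unique degree-$3$ vertex $w$ is adjacent to both degree-$2$ vertices and to one leaf; completing the two degree-$2$ vertices in the only two possible ways shows the component of $w$ is either a triangle with one pendant vertex attached or $P_3\astrosun K_1$, with all remaining vertices leaves and hence paired into $P_2$'s. A direct computation shows the triangle-with-a-pendant is $A$-AW with $A$-toggling number $-2$ for the all-ones labeling (the value Lemma~\ref{pendwin} assigns a pendant tree), so Corollary~\ref{subsetjoinaw} applies to $F$ in this subcase and shows $\overline{F}$ is $N$-AW only when $\gcd(n-3,\ell)=1$ --- which is excluded. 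Hence $F=(P_3\astrosun K_1)\cup\frac{n-6}{2}P_2$. In both surviving cases $F$ is a pendant graph of size $\frac{n}{2}+2$, so $\gcd(n-5,\ell)=1$ by Lemma~\ref{pendantN}; and since a pendant graph $H\astrosun K_1$ of order $n$ and size $\frac{n}{2}+2$ has $|E(H)|=2$ (so $H$ is $P_3$ or $2K_2$ together with isolated vertices), these two graphs are the complete list, which yields the classification.

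I expect the main obstacle to be organizing the $\Delta(F)=3$ analysis so that every twin-free $F$ of order $n$ and size $\frac{n}{2}+2$ is accounted for: the isolated-vertex subcases have to be dispatched through Corollary~\ref{extdom}, and --- most delicately --- the triangle-with-a-pendant configuration must be separated from $P_3\astrosun K_1$, because it is an otherwise legitimate twin-free graph of exactly the right size whose complement is $N$-AW for a different residue of $n$; the $A$-toggling-number computation together with Corollary~\ref{subsetjoinaw} is precisely what makes that separation.
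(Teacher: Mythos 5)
Your proof is correct and follows essentially the same route as the paper's: the identical lower and upper bounds via Lemma~\ref{pendantN} and Propositions~\ref{evenmatch}--\ref{prop:plus1}, the same reduction to degree sequences via Lemma~\ref{degreebound}, and the same eliminations via Corollary~\ref{twins}, Lemma~\ref{lem:dvertices}, Corollary~\ref{extdom}, and Theorem~\ref{thm:maxdegree2}. The only local divergence is the triangle-with-a-pendant component, which the paper dispatches for every $\ell$ by swapping it for $P_4$ via Corollary~\ref{extswitch}, while you instead use Corollary~\ref{subsetjoinaw} to see its complement is $N$-AW exactly when $\gcd(n-3,\ell)=1$, already excluded by the standing hypotheses; both arguments are valid.
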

\begin{proof}
Suppose $\gcd(n-1,\ell)\neq 1$, $\gcd(n-3,\ell)\neq 1$ and $\gcd(n-5,\ell)=1$. By Proposition \ref{evenmatch} and Proposition \ref{prop:plus1} $\max(n,\ell)\leq \binom{n}{2}- \left( \frac{n}{2}+2 \right)$. Consider $H = (P_3 \astrosun K_1)\cup \frac{n-6}{2}P_2$ which has size $\frac{n}{2}+2$. By Lemma \ref{pendantN}, $\overline{H}$ is $N$-AW if and only if $\gcd(n-5,\ell)=1$. So, $\max(n,\ell)=\binom{n}{2}- \left( \frac{n}{2}+2 \right)$.

Now suppose that $\max(n,\ell)=\binom{n}{2}- \left( \frac{n}{2}+2 \right)$. Then $\gcd(n-1,\ell)\neq 1$ and $\gcd(n-3,\ell)\neq 1$ by Propositions \ref{evenmatch} and \ref{prop:plus1}. We will describe all $G$ such that $G$ is $N$-AW and $E(\overline{G})=\frac{n}{2} + 2$ and show either that these graphs are not $(n,\ell)$-extremal or that they are $N$-AW if and only if $\gcd(n-5,\ell)=1$.

Suppose that $G$ is $N$-AW with $E(\overline{G})=\frac{n}{2}+2$. The degree sum of $\overline{G}$ is $n+4$. By Lemma \ref{degreebound}, $\Delta(\overline{G}) \leq 3$. To avoid $N$-twins in $G$, $\overline{G}$ can have at most one $0$-vertex. Thus the only possible degree sequences for $\overline{G}$ are $d_0 = (3,3,1,1,\dots,1)$, $d_1=(3,2,2,1,1,\dots,1)$, $d_2 = (2,2,2,2,1,1,\dots,1)$, $d_3=(3,3,2,1,1,\dots,1,0)$, $d_4=(3,2,2,2,1,1,\dots,1,0)$, and $d_5 = (2,2,2,2,2,1,1,\dots,1,0)$.
For a graph with degree sequence $d_0$ note that each of the 3-vertices must have at least two pendant neighbors. So by  Lemma~\ref{lem:dvertices}, no graph with degree sequence $d_0$ is $N$-AW.

If $\overline{G}$ has degree sequence $d_1$, Lemma \ref{lem:dvertices} implies that all $2^+$-vertices are in the same component. The other components of $\overline{G}$ must be a matching. So our options are  $G' \cup \frac{n-4}{2} P_2$ where $G'$ is shown in Figure \ref{fig:G1} or $H=(P_3 \astrosun K_1)\cup \frac{n-6}{2}P_2$. Note $\overline{H}$ is $N$-AW if and only if $\gcd(n-5,\ell)=1$ as shown above.  The graph $G'$ has order $4$ and size $4$. Moreover, given the initial labeling $\bf{0}_1$ we can achieve the $0$ labeling by toggling the vertices $b$ and $c$ each $-1$ times. Thus, $T_{V(G_1)}^A (1) = \{-2\}$. By Lemma \ref{matrixLO}, $G'$ is $A$-AW because the adjacency matrix is invertible.  The graph $P_4 = P_2 \astrosun K_1$ has order $4$, size $3$, is $A$-AW by Lemma~\ref{pendwin}(\ref{pendwinall}), and, by Lemma~\ref{pendwin}(\ref{pendtoggen}), $T_{V(P_4)}^A(1) = \{-2\}$.  Thus, $\overline{G'\cup \frac{n-4}{2} P_2}$ is not $(n,\ell)$-extremal by Corollary~\ref{extswitch}. 

\begin{figure}
\begin{center}
\begin{tikzpicture}
\node at (0,-.75) (1) {a};
\node at (.75,0) (2) {b};
\node at (.75,-1.5) (3) {d};
\node at (1.5,0) (4) {c};
\draw (1) -- (2);
\draw (1) -- (3);
\draw (2) -- (3);
\draw (2) -- (4);
\end{tikzpicture}
\caption{One option for the non-matching component of a graph with degree sequence $d_1$ in the proof of Proposition \ref{prop:plus2}.}
\label{fig:G1}
\end{center}
\end{figure}
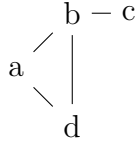

If $\overline{G}$ has degree sequence $d_2$ then $\Delta(\overline{G})=2$ and so, by Theorem \ref{thm:maxdegree2}, each component is $P_2$ or $P_4$. This leaves just $2P_4 \cup \frac{n-8}{2}P_2$ which is a pendant graph and thus, by Lemma \ref{pendantN}, $N$-AW if and only if $\gcd(n-5,\ell)=1$. 

Suppose $\overline{G}$ has degree sequence $d_3$, $d_4$ or $d_5$. In these cases $\overline{G}$ has an isolated vertex so by  Corollary~\ref{extdom}, any component with non-pendant vertices has no pendant vertices. Degree sequence $d_3$ is impossible because there are not enough $2^+$ vertices to be in a component with a $3$-vertex. If $\overline{G}$ has degree sequence $d_4$, this implies one of the components must have odd degree sum which is impossible. If $\overline{G}$ has degree sequence $d_5$ then $\Delta(\overline{G})= 2$.  By Theorem \ref{thm:maxdegree2} if $G$ is $(n,\ell)$-extremal then each component of $\overline{G}$ is either $P_2$ or $P_4$. Since the number of $2$-vertices is odd no such graph exists.

Thus if $\max(n,\ell)=\binom{n}{2} - \left( \frac{n}{2}+2 \right)$ then $\gcd(n-5,\ell)=1$. Moreover the unique $(n,\ell)$-extremal graphs are $\overline{(P_3 \astrosun K_1)\cup \frac{n-6}{2} P_2}$ and $\overline{2P_4\cup \frac{n-8}{2} P_2}$ which are the complements of the only pendant graphs of order $n$ and size $\frac{n}{2} + 2$.
\end{proof}

In the next proposition, we resolve the case $k=3$ of Theorem \ref{thm:extremalpendant}. We use the techniques of Proposition \ref{prop:plus2}, but must analyze more cases. 

\begin{prop}\label{prop:plus3}
Let $n,\ell\in\mathbb{N}$ be even and $n\geq 8$.  Then 
\begin{center}
$\max(n,\ell)=\binom{n}{2}- \left( \frac{n}{2} + 3 \right)$ if and only if $\gcd(n-2k-1,\ell)\neq 1$ for $0\leq k\leq 2$, and $\gcd(n-7,\ell)=1$.
\end{center}
In this case the unique $(n,\ell)$-extremal examples are exactly those graphs whose complements are pendant graphs with $\frac{n}{2}+3$ edges: $(C_3 \astrosun K_1) \cup \frac{n-6}{2}P_2$, $(P_4 \astrosun K_1) \cup \frac{n-8}{2} P_2$, $(K_{1,3} \astrosun K_1) \cup \frac{n-8}{2}P_2$, $(P_3 \astrosun K_1) \cup P_4 \cup \frac{n-10}{2}P_2$, and $3P_4 \cup \frac{n-12}{2} P_2$.
\end{prop}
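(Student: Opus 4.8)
The plan is to follow the template established in the proofs of Propositions~\ref{prop:plus1} and \ref{prop:plus2}, but with a larger case analysis. First I would handle the forward direction: assume $\gcd(n-2k-1,\ell)\neq 1$ for $0\le k\le 2$ and $\gcd(n-7,\ell)=1$. By Propositions~\ref{evenmatch}, \ref{prop:plus1}, and \ref{prop:plus2}, the first three non-coprimality hypotheses force $\max(n,\ell)\le\binom{n}{2}-\left(\frac{n}{2}+3\right)$. Then I would exhibit the pendant graph $H=(C_3\astrosun K_1)\cup\frac{n-6}{2}P_2$ of size $\frac{n}{2}+3$ and apply Lemma~\ref{pendantN}: since $n-m = n-\left(\frac{n}{2}+3\right)=\frac{n}{2}-3$, we have $2[n-m]-1 = n-7$, so $\overline{H}$ is $N$-AW iff $\gcd(n-7,\ell)=1$, giving equality. (One should double-check that $C_3\astrosun K_1$ is genuinely a pendant graph in the sense defined — it is, taking $H'=C_3$.)

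For the converse, assume $\max(n,\ell)=\binom{n}{2}-\left(\frac{n}{2}+3\right)$. The first three non-coprimality conditions follow immediately from Propositions~\ref{evenmatch}, \ref{prop:plus1}, \ref{prop:plus2} (otherwise $\max$ would be larger). The substantive work is: given any $N$-AW graph $G$ with $|E(\overline{G})|=\frac{n}{2}+3$, either show $\overline{G}$ is not extremal or show it is $N$-AW iff $\gcd(n-7,\ell)=1$. Here I would use Lemma~\ref{degreebound} with $t=3$ to get $\Delta(\overline{G})\le 4$, note the degree sum of $\overline{G}$ is $n+6$, and use Corollary~\ref{twins} to forbid two $0$-vertices, thereby enumerating all admissible degree sequences. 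For each I would apply the toolkit: Lemma~\ref{lem:dvertices} (a $d$-vertex needs $d-1$ neighbors of degree $\ge 2$) to rule out sequences with too few high-degree vertices; Corollary~\ref{extdom} to handle sequences with an isolated vertex (forcing non-pendant components to have no pendant vertices, hence to be cycles, which when combined with the parity constraints on $n$ and Lemma~\ref{cyclenotaw}/\ref{notswin} are eliminated); parity of component degree sums to kill further cases; Theorem~\ref{thm:maxdegree2} when $\Delta(\overline{G})\le 2$ to force $P_2,P_4$ components; and Corollary~\ref{extswitch} to discard any non-pendant component $C$ that is $A$-AW with the same order and toggling number as a smaller pendant tree ($P_4$ has order $4$, size $3$, $T^A(1)=\{-2\}$ by Lemma~\ref{pendwin}, and $P_3\astrosun K_1$ has order $6$, size $5$, $T^A(1)=\{-2\cdot 1\cdot\text{(\#components)}\}=\{-4\}$, etc., so I'd match candidate components against these). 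The graphs surviving all of this should be exactly the complements of the five listed pendant graphs, and each is $N$-AW iff $\gcd(n-7,\ell)=1$ by Lemma~\ref{pendantN}, since all have size $\frac{n}{2}+3$.

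The main obstacle is the combinatorial explosion in the degree-sequence case analysis. With $\Delta\le 4$ and degree sum $n+6$, the number of degree sequences to consider (especially those with one or two vertices of degree $3$ or $4$) is considerably larger than in Proposition~\ref{prop:plus2}, and for each non-matching-plus-small-piece candidate I must either identify an explicit smaller $A$-AW replacement component (to invoke Corollary~\ref{extswitch}) or verify directly that the component fails to be $A$-AW (e.g.\ via the $C_3$ computation: $C_3$ is $A$-AW iff $\ell$ is odd, so any $\overline{G}$ containing a bare $C_3$ component with $\ell$ even is killed by Lemma~\ref{notswin} or by failing Theorem~\ref{pendantremove}). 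The candidates with a degree-$4$ vertex (a $K_{1,4}$-type core) and the degree-$3$-rich cores need careful handling: I expect a $K_{1,4}$ core to be replaceable by $K_{1,3}\astrosun K_1$-type pieces or ruled out by Lemma~\ref{lem:dvertices} since a $4$-vertex needs three $2^+$-neighbors, which is expensive given only $\frac{n}{2}+3$ total edges. A secondary subtlety is confirming that the five listed pendant graphs are precisely \emph{all} pendant graphs of order $n$ and size $\frac{n}{2}+3$ — this is a small separate enumeration (a pendant graph $H'\astrosun K_1$ with $2|V(H')|\le n$ contributing $|V(H')|$ pendant edges plus $|E(H')|$ internal edges plus isolated-$P_2$ padding, so $|E(H')|+|V(H')|+(\text{leftover }P_2)=\frac{n}{2}+3$ forces $|E(H')|=3$, and the connected graphs with three edges are $C_3$, $P_4$, $K_{1,3}$, plus the disconnected option $P_3\cup$ (part of another pendant core), explaining the $(P_3\astrosun K_1)\cup P_4$ and $3P_4$ entries).
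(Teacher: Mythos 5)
Your plan is the paper's plan: the upper bound and the three non-coprimality conditions come from Propositions~\ref{evenmatch}, \ref{prop:plus1}, and \ref{prop:plus2}; the lower bound comes from a pendant graph of size $\frac{n}{2}+3$ via Lemma~\ref{pendantN} (the paper uses $(P_4\astrosun K_1)\cup\frac{n-8}{2}P_2$ where you use $(C_3\astrosun K_1)\cup\frac{n-6}{2}P_2$, an immaterial difference); and the converse is a degree-sequence analysis driven by Lemma~\ref{degreebound}, Lemma~\ref{lem:dvertices}, Corollary~\ref{extdom}, Theorem~\ref{thm:maxdegree2}, and Corollary~\ref{extswitch}. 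The gap is that the converse is only a strategy, and for this proposition the strategy \emph{is} the easy part. The paper's proof consists almost entirely of the enumeration you defer: twelve admissible degree sequences (five with a $0$-vertex, seven without), each with multiple realizations that must be disposed of one at a time --- some by twins, some by parity of component degree sums, some by explicit row reduction of a sporadic component's adjacency matrix (the bowtie, the house, $K_{2,3}$), some by exhibiting a labeling that defeats Theorem~\ref{pendantremove}(\ref{penneighbor}), and six (the graphs $G_2,\dots,G_8$ of the appendix) by computing their toggling numbers and finding a matching smaller replacement for Corollary~\ref{extswitch}. Writing ``the graphs surviving all of this should be exactly the complements of the five listed pendant graphs'' asserts the conclusion rather than deriving it.

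A concrete error that would derail the replacement step: you state $T^{A}_{V(P_3\astrosun K_1)}(1)=\{-2\cdot 1\cdot(\#\text{components})\}=\{-4\}$. Since $P_3\astrosun K_1$ is a single pendant tree with $c=1$ (order $6$, size $5$), Lemma~\ref{pendwin} gives $T^{A}_{V(P_3\astrosun K_1)}(1)=\{-2c\}=\{2(m-n)\}=\{-2\}$, not $\{-4\}$. Corollary~\ref{extswitch} demands exact equality of toggling numbers, and the paper uses $P_3\astrosun K_1$ precisely to replace the order-$6$, size-$6$ components with toggling number $-2$ ($G_2$, $G_3$, $G_6$), while the components with toggling number $-4$ ($G_4$, $G_5$) must instead be replaced by $P_4\cup P_2$ or $2P_4$. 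With the value $-4$ in hand you would pair candidates with invalid replacements, and the extremality eliminations would not go through. Your closing enumeration of the five pendant graphs of size $\frac{n}{2}+3$ (forcing $|E(H')|=3$ in the core) is correct and matches the statement.
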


\begin{proof}
Suppose  $\gcd(n-2k-1,\ell)\neq 1$ for $0\leq k\leq 2$, and $\gcd(n-7,\ell)=1$.  By Propositions \ref{evenmatch}, \ref{prop:plus1} and \ref{prop:plus2} we know $\max(n,\ell) \leq \binom{n}{2}- \left( \frac{n}{2}+3 \right)$. Consider $G=(P_4 \astrosun K_1)\cup \frac{n-8}{2}P_2$ which has $\frac{n}{2}+3$ edges. Since $\gcd(n-2(3)-1,\ell)=1$, $G$ is $(N,\ell)$-AW by Lemma \ref{pendantN}. So $\max(n,\ell) = \binom{n}{2}- \left( \frac{n}{2}+3 \right)$.

Suppose that $\max(n,\ell)=\binom{n}{2} - \left( \frac{n}{2}+3 \right)$. Then $\gcd(\ell,2)\neq 1$ and $\gcd(n-2k-1,\ell)\neq 1$ for $0\leq k\leq 2$ by Propositions \ref{evenmatch}, \ref{prop:plus1} and \ref{prop:plus2}.  We describe all $G$ such that $G$ is $(N,\ell)$-AW and $E(\overline{G}) = \frac{n}{2} + 3$ and show that these graphs are either not $(n,\ell)$-extremal or are $(N,\ell)$-AW if and only if $\gcd(n-7,\ell)=1$.

Suppose that $G$ is $(N,\ell)$-AW with $E(\overline{G})= \frac{n}{2}+3$. The degree sum of $\overline{G}$ is $n+6$. By Lemma \ref{degreebound}, $\Delta(\overline{G}) \leq 4$. To avoid $N$-twins in $G$, $\overline{G}$ can have at most one $0$-vertex.  

We first consider the case when $\overline{G}$ has a $0$-vertex.  By Corollary \ref{extdom} we know that $\overline{G}$ does not have a pendant vertex that is not part of a $P_2$ component. 
Thus the degree sequence of $\overline{G}$ has an even number of $1$-vertices. Since the total number of vertices is even and we have a $0$-vertex, we know the number of $2^+$-vertices will be odd. By considering all integer partitions of $7$ that when added to $(1,1,\dots,1,0)$ will satisfy having an even number of $2^+$ vertices and no $5^+$-vertex we get the following possible degree sequences:
    \begin{itemize}
    \item $d_0 = (4,4,2,1,1,\dots,1,0)$
    \item $d_1 = (4,3,3,1,1,\dots, 1,0)$
    \item $d_2 = (4,2,2,2,2,1,\dots,1,0)$
    \item $d_3 = (3,3,2,2,2,1,\dots, 1,0)$
    \item $d_4 = (2,2,2,2,2,2,2,1,\dots,1,0)$
    \end{itemize}
By Lemma \ref{lem:dvertices} degree sequences $d_0$ and $d_1$ can not have a realization that is $(N,\ell)$-AW for any $\ell$. In the case of $d_2$, by Corollary \ref{extdom} and Lemma \ref{lem:dvertices} all the $2^+$-vertices form a component with no $1$-vertices. The only realization of $(4,2,2,2,2)$ is the bowtie graph shown in Figure \ref{bowtie}. Let $H_{d_2}$ be the bowtie graph along with the required number of $P_2$ components and an isolated vertex. By Theorem \ref{subsetjoindom}, $\overline{H_{d_2}}$ is $(N,\ell)$-AW if and only if $H_{d_2}-P_1$ is $(A,\ell)$-AW. By row reducing the adjacency matrix, we find that the bowtie graph (and thus $H_{d_2}-P_1$) is $(A,\ell)$-AW if and only if $\ell$ is odd.  Thus, $\overline{H_{d_2}}$ is $(N,\ell)$-AW if and only if $\ell$ is odd, and so the graph corresponding to $d_2$ is not $(n,\ell)$-extremal by Proposition \ref{prop:triangle}.

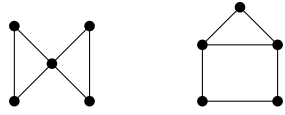
\begin{figure}
\begin{center}
\begin{tikzpicture}
\coordinate (1) at (0,0);
\coordinate (2) at (.5,.5);
\coordinate (3) at (.5,-.5);
\coordinate (4) at (-.5, .5);
\coordinate (5) at (-.5,-.5);
\fill (1) circle (2pt);
\fill (2) circle (2pt);
\fill (3) circle (2pt);
\fill (4) circle (2pt);
\fill (5) circle (2pt);
\draw (1) -- (2);
\draw (1) -- (3);
\draw (1) -- (4);
\draw (1) -- (5);
\draw (4) -- (5);
\draw (2) -- (3);

\coordinate (6) at (2,-.5);
\coordinate (7) at (3,-.5);
\coordinate (8) at (2,.25);
\coordinate (9) at (3,.25);
\coordinate (10) at (2.5, .75);
\fill (6) circle (2pt);
\fill (7) circle (2pt);
\fill (8) circle (2pt);
\fill (9) circle (2pt);
\fill (10) circle (2pt);
\draw (6) -- (7);
\draw (7)-- (9);
\draw (6) -- (8);
\draw (8) -- (9);
\draw (8) -- (10);
\draw (9) -- (10);
\end{tikzpicture}
\end{center}
\caption{The bowtie graph with degree sequence $(4,2,2,2,2)$ on the left, and the house graph  with degree sequence $(3,3,2,2,2)$ on the right. Both graphs appear in the proof of Proposition \ref{prop:plus3}.}
\label{bowtie}
\end{figure}

Again, by Corollary \ref{extdom} and Lemma \ref{lem:dvertices} we find that for the case of $d_3$, the $2^+$-vertices form a component with no $1$-vertices. Considering the cases in which the two $3$-vertices are adjacent and when they are not, we have that the only realizations of $(3,3,2,2,2)$ are the house graph (shown in Figure \ref{bowtie})  and $K_{2,3}$. Let $H_{d_3}$ be the house graph along with the required number of $P_2$ components and an isolated vertex. As in the previous paragraph, we use Theorem \ref{subsetjoindom} to determine the $N$-winnability of $\overline{H_{d_3}}$ by row reducing the adjacency matrix of the house graph, and we find $\overline{H_{d_3}}$ is never $(N,\ell)$-AW. Note that in $K_{2,3}$, the two $3$-vertices are $A$-twins and thus, by Theorem \ref{subsetjoindom}, the complement of this realization is never $(N,\ell)$-AW.

In the case of $d_4$ we note that $\Delta(\overline{G})=2$. By Theorem \ref{thm:maxdegree2} if $G$ is $(n,\ell)$-extremal then each component of $\overline{G}$ is either $P_2$ or $P_4$. Since a realization of $d_4$ has a $P_1$ component, there is no such $(n,\ell)$-extremal graph. 

Therefore, given the hypotheses, there are no $(n,\ell)$-extremal graphs with $\binom{n}{2}-\left(\frac{n}{2}+3\right)$ edges and a dominating vertex.

Now suppose there is no $0$-vertex in $\overline{G}$. To get a degree sum of $n+6$ we need to add integer partitions of $6$ to $(1,1,\dots,1)$. Considering all integer partitions of $6$ that have parts of size at most $3$  and adding these to $(1,1,\dots,1)$ we get the following possible degree sequences:
    \begin{itemize}
    \item $d_5 = (4,4,1,1,\dots,1)$
    \item $d_6 = (4,3,2,1,\dots,1)$
    \item $d_7 = (4,2,2,2,1,\dots,1)$
    \item $d_8 = (3,3,3,1,\dots,1)$
    \item $d_9 = (3,3,2,2,1,\dots,1)$
    \item $d_{10} = (3,2,2,2,2,1,\dots,1)$
    \item $d_{11}= (2,2,2,2,2,2,1,\dots,1)$
    \end{itemize}

We eliminate $d_5$ and $d_6$ using Lemma \ref{lem:dvertices}. In the case of $d_7$ all of the  $2$-vertices must be adjacent to the $4$-vertex. Considering the possible adjacencies among the  $2$-vertices the possible graphs are $H_{d_7}  =K_{1,3}\astrosun K_1\cup \frac{n-4}{2}P_2$ and the graph $G_2\cup \frac{n-6}{2} P_2$  where $G_2$ is given in Appendix \ref{Appendix}.   Since $H_{d_7}$ is a pendant graph we know $\overline{H_{d_7}}$ is $(N,\ell)$-AW if and only if $\gcd(n-7,\ell)=1$ by Lemma \ref{pendantN}. 
By Lemma \ref{matrixLO} and the fact that the adjacency matrix of $G_2$ is invertible we know $G_2$ is $(A,\ell)$-AW. From Appendix \ref{Appendix} $G_2$ has order $6$, size $6$, and $T_{G_2}^A(1) = -2$.  However, $P_3\astrosun K_1$ is $(A,\ell)$-AW by Lemma \ref{pendwin}(\ref{pendwinall}), has order $6$, size $5$, and has $T_{V(P_3\astrosun K_1)}^A =-2$ by Lemma \ref{pendwin}(\ref{pendtreetog}). Thus by Corollary \ref{extswitch}, $\overline{G_2\cup \frac{n-6}{2} P_2}$ is not $(n,\ell)$-extremal.

Consider degree sequence $d_8$. By Lemma \ref{lem:dvertices} all $3$-vertices must be adjacent to each other. Thus the only possible graph is $H_{d_8} = (C_3 \astrosun K_1)\cup \frac{n-6}{2} P_2$. Since $H_{d_8}$ is a pendant graph we know $\overline{H_{d_8}}$ is $(N,\ell)$-AW if and only if $\gcd(n-7,\ell)=1$ by Lemma \ref{pendantN}.

For degree sequence $d_9$ again by Lemma \ref{lem:dvertices} all $2^+$-vertices must be in the same component.  We generate all possible graphs with degree sequence $d_9$ by considering whether or not the two $3$-vertices are adjacent. If the two $3$-vertices are not adjacent (in the complement graph) then they each must be adjacent to both of the degree $2$ vertices, resulting in $G_{d_9}$ which is given Figure \ref{d9}. Since $\overline{G_{d_9} \cup \frac{n-6}{2} P_2}$ has $N$-twins ($v$ and $w$ in Figure \ref{d9}) this graph is not $(N,\ell)$-AW for any $\ell$ by Corollary \ref{twins}.
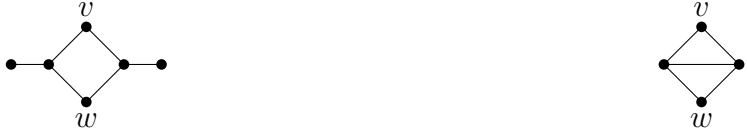
\begin{figure}
\begin{center}
\begin{multicols}{2}
\begin{tikzpicture}
\coordinate (1) at (0,0);
\coordinate (2) at (.5,.5);
\coordinate (3) at (.5,-.5);
\coordinate (4) at (1,0);
\coordinate (5) at (1.5,0);
\coordinate (6) at (-.5,0);
\fill (1) circle (2pt);
\fill (2) circle (2pt);
\fill (3) circle (2pt);
\fill (4) circle (2pt);
\fill (5) circle (2pt);
\fill (6) circle (2pt);
\draw (1) -- (2);
\draw (1) -- (3);
\draw (2) -- (4);
\draw (3) -- (4);
\draw (4) -- (5);
\draw (1) -- (6);
\draw[above] (2) node {$v$};
\draw[below] (3) node {$w$};
\end{tikzpicture}

\begin{tikzpicture}
\coordinate (1) at (0,0);
\coordinate (2) at (.5,.5);
\coordinate (3) at (.5,-.5);
\coordinate (4) at (1,0);
\fill (1) circle (2pt);
\fill (2) circle (2pt);
\fill (3) circle (2pt);
\fill (4) circle (2pt);
\draw (1) -- (2);
\draw (1) -- (3);
\draw (2) -- (4);
\draw (3) -- (4);
\draw (1) -- (4);
\draw[above] (2) node {$v$};
\draw[below] (3) node {$w$};
\end{tikzpicture}
\end{multicols}
\end{center}
\caption{At left, the graph $G_{d_9}$ - the non-matching component of the only graph with degree sequence $d_9$ from Proposition \ref{prop:plus3} for which the two $3$-vertices are not adjacent. At right, the graph $G_{d_9}'$ - the non-matching component of only graph with degree sequence $d_9$ in which the two  $3$-vertices are adjacent and have two neighbors in common in the proof of Proposition \ref{prop:plus3}.}
\label{d9}
\end{figure}

Suppose the two  $3$-vertices are adjacent. We consider cases based on their number of common neighbors. If there are no common neighbors then, to avoid twins, we get $(P_4\astrosun K_1) \cup \frac{n-8}{2} P_2$ or $G_3 \cup \frac{n-6}{2}P_2$ where $G_3$ is given in Appendix \ref{Appendix}. For the former, the complement is $(N,\ell)$-AW if and only if $\gcd(n-7,\ell)=1$ by Lemma \ref{pendantN}.  For the latter we apply Corollary \ref{extswitch}. By Lemma \ref{matrixLO}, $G_3\cup \frac{n-6}{2} P_2$ is $(A,\ell)$-AW. By Appendix \ref{Appendix} graph $G_3$ has order $6$, size $6$, and $T_{G_3}^A(1) = -2$.  However, $P_3\astrosun K_1$ is $(A,\ell)$-AW by Lemma \ref{pendwin}(\ref{pendwinall}), has order $6$, size $5$, and has $T_{V(P_3\astrosun K_1)}^A =-2$ by Lemma \ref{pendwin}(\ref{pendtreetog}). Thus by Corollary \ref{extswitch}, $\overline{G_3\cup \frac{n-6}{2} P_2}$ is not $(n,\ell)$-extremal.

Now suppose the two degree $3$ vertices have one neighbor in common. Then we get the graph  $G_4$ in Appendix \ref{Appendix}. We see $G_4$ has order $6$, size $6$, and has $T_{V(G_4)}^A(1) =\{-4\}$. Also $G_4$ is $(A,\ell)$-AW by Lemma \ref{matrixLO}. However, by Lemma \ref{pendwin}(\ref{pendtoggen}), $(P_2\cup P_1) \astrosun K_1 = P_4 \cup P_2$ has $T_{V(P_2\cup P_1)\astrosun K_1)}^A(1) = \{-4\}$.
So by Corollary \ref{extswitch} $G_4$ is not $(n,\ell)$-extremal.  Finally, suppose the two degree $3$ vertices have two neighbors in common. This results in $G_{d_9}'$ given on the right in Figure \ref{d9}. Since $\overline{G_{d_9}'\cup \frac{n-4}{2}}$ has $N$-twins ($v$ and $w$), it is not $(N,\ell)$-AW for any $\ell$ by Corollary \ref{twins}.


Next consider degree sequence $d_{10}$. In this case it is not necessarily true that all $2^+$-vertices need to be in the same component. However, if the $2^+$ vertices form more than one component, it would have to be the case that one component had a $3$-vertex and two $2$-vertices by Lemma \ref{lem:dvertices}. In this case, the arguments in Proposition \ref{prop:plus2} for degree sequence $d_1$ apply. 

Now suppose all the $2^+$-vertices are in the same component. We consider cases based on the degrees of the vertices adjacent to the $3$-vertex. This could either be two $2$-vertices and one $1$-vertex or three $2$-vertices.

Suppose there are two $2$-vertices and one $1$-vertex adjacent to the  $3$-vertex. The two  $2$-vertices each have an additional neighbor (not the $3$-vertex and not each other since then the $3$-vertex would be forced to have three neighbors of degree $2$). Call these additional neighbors $v$ and $w$. If one of these vertices is degree $1$ we end up with the graph $G_5$ in Appendix \ref{Appendix}. By Lemma \ref{pendwin}(\ref{pendtoggen}),  $P_4 \cup P_4$ has $T_{V(P_4\cup P_4)}^A =-4$, and we find $G_5 \cup \frac{n-8}{2}P_2$ is not $(n,\ell)$-extremal by Corollary \ref{extswitch} .  If $v$ and $w$ each have degree $2$ we consider the possibility that they are adjacent to each other and if they are not. This yields the graphs in Figure \ref{d9-2} and graph $G_6$ in Appendix \ref{Appendix}. The graph in Figure \ref{d9-2} has a labeling that is not $(A,\ell,s)$-winnable for all $s$, namely the labeling where one of the pendant vertices adjacent to a 2-vertex has label 1 and the remaining vertices have label 0.
 This would make $G$ not $(N,\ell)$-AW by Theorem~\ref{pendantremove}(\ref{penneighbor}). Since  $P_3 \astrosun K_1$ has $T_{V(P_3\astrosun K_1)}^A =-2$ by Lemma \ref{pendwin}(\ref{pendtreetog}), $G_6 \cup \frac{n-6}{2}P_2$ is not $(n,\ell)$-extremal by Corollary \ref{extswitch}.

Now suppose there are three $2$-vertices adjacent to the $3$-vertex. Considering whether two of the $2$-vertices are adjacent to each other or not we get graphs $G_7$ and $G_8$ in Appendix \ref{Appendix}. Since $T_{V(P_4 \cup 2P_2)}^A(1) = -6$ and $T_{V(P_4\cup P_4)}^A(1)=-4$ by Lemma \ref{pendwin}(\ref{pendtoggen}), we know neither $G_7 \cup \frac{n-6}{2}P_2$ nor $G_8 \cup \frac{n-8}{2}P_2$ is $(n,\ell)$-extremal by Corollary \ref{extswitch}.

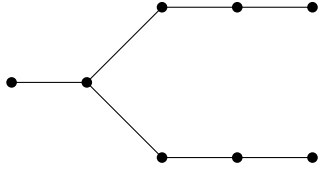
\begin{figure}
\begin{center}
\begin{tikzpicture}
\coordinate (1) at (0,0);
\coordinate (2) at (1,0);
\coordinate (3) at (2,1);
\coordinate (4) at (2,-1);
\coordinate (5) at (3,1);
\coordinate (6) at (3,-1);
\coordinate (7) at (4,-1);
\coordinate (8) at (4,1);

\fill (1) circle (2pt);
\fill (2) circle (2pt);
\fill (3) circle (2pt);
\fill (4) circle (2pt);
\fill (5) circle (2pt);
\fill (6) circle (2pt);
\fill (7) circle (2pt);
\fill (8) circle (2pt);

\draw (1) -- (2);
\draw (2) -- (3);
\draw (2) -- (4);
\draw (3) -- (5);
\draw (4) -- (6);
\draw (6) -- (7);
\draw (5) -- (8);
\end{tikzpicture}
\end{center}
\caption{The second possibility for a high degree component for degree sequence $d_{10}$ in which the degree $3$ vertex is adjacent to two degree $2$ vertices and one degree $1$ component.}
\label{d9-2}
\end{figure}

If $\overline{G}$ has degree sequence $d_{11}$ then $\Delta(\overline{G})=2$. By Theorem \ref{thm:maxdegree2} if $G$ is $(n,\ell)$-extremal then each component of $\overline{G}$ is either $P_2$ or $P_4$. Thus, the graph must be $3P_4 \cup \frac{n-12}{2} P_2$, a pendant graph. This is $(N,\ell)$-AW if and only if $\gcd(n-7,\ell)=1$ by Lemma \ref{pendantN}. 

Therefore every possible graph that is $(N,\ell)$-AW and has $E(\overline{G}) = \frac{n}{2}+3$ is either not $(n,\ell)$-extremal or has $\gcd(n-7,\ell)=1$, as desired.
\end{proof}

\noindent\emph{Proof of Theorem \ref{thm:extremalpendant}}. The cases $0\leq k\leq 3$ are true by Propositions \ref{evenmatch}, \ref{prop:plus1}, \ref{prop:plus2}, \ref{prop:plus3}, respectively. \hfill $\qed$

\section{Open Problems}

We close with three open problems related to our results.

\bigskip

\noindent (1) \emph{Does Theorem~\ref{thm:extremalpendant} hold for $k \ge 4$?} We made much progress on this result by considering the possible degree sequences.  However, when $k=4$, there are $37$  partitions of $7$ and $8$. Even with the additional restriction of Lemma \ref{degreebound} there are $23$ different degree sequences to consider. Thus, we need an alternative method to solve the general problem.

\bigskip

\noindent (2) \emph{What are the graphs of maximum size that are $(N,\ell)$-AW for all $\ell$?}  The best candidates we have found are complements of pendant trees, which have size $\bc{n}{2}-(n-1)$.  They are all $(N,\ell)$-AW for all $\ell$, but it is not clear that they are $(n,\ell)$-extremal.  

\bigskip

\noindent (3) \emph{What are the $(n,\ell)$-extremal graphs for other Lights Out games, such as the adjacency game?}

\bibliographystyle{amsalpha}
\bibliography{lightsout}

\newpage

\appendix

\section{Replacement Graphs}
\label{Appendix}

\small{The following graphs are the connected components we replace when we apply Corollary~\ref{extswitch}.  For each graph $G$, we show a picture of the graph; a table showing how many times each vertex is toggled to win the $(A,\ell)$-Lights Out game with initial labeling $\textbf{0}_1$; and $T_{V(G)}^A(1)$, which is obtained by adding the toggles from the table.  Note that in each case $A(G)$ is invertible. Thus, there is precisely one way to win this Lights Out game, which is why there is only one toggling number for each graph.}

\begin{tabular}{l l l}

\adjustbox{valign=t}{\begin{tikzpicture}
\node at (-.8,-.75) (0) {$G_1=$};
\node at (0,-.75) (1) {a};
\node at (.75,0) (2) {b};
\node at (.75,-1.5) (3) {d};
\node at (1.5,0) (4) {c};

\draw (1) -- (2);
\draw (1) -- (3);
\draw (2) -- (3);
\draw (2) -- (4);

\end{tikzpicture} }&

\adjustbox{valign=t}{\begin{tabular}{|c|c|} \hline
vertex & Number of Toggles \\ \hline
a & 0  \\ \hline
b & -1 \\ \hline
c & -1  \\ \hline
d & 0  \\ \hline
\end{tabular}}

& $T_{V(G_1)}^A(1)=-2$ \\
\\

\hline
\hline

\\
\\
\\
\\
\adjustbox{valign=t}{\begin{tikzpicture}
\node at (-.8,0) (0) {$G_2=$};
\node at (0,0) (1) {a};
\node at (.75,.75) (2) {b};
\node at (.75,-.75) (3) {e};
\node at (1.5,.75) (4) {c};
\node at (2.25,.75) (5) {d};
\node at (1.5,-.75) (6) {f};

\draw (1) -- (2);
\draw (1) -- (3);
\draw (2) -- (3);
\draw (2) -- (4);
\draw (4) -- (5);
\draw (2) -- (6);
\end{tikzpicture} }&

\adjustbox{valign=t}{\begin{tabular}{|c|c|} \hline
vertex & Number of Toggles \\ \hline
a & 0  \\ \hline
b & -1 \\ \hline
c & -1  \\ \hline
d & 0  \\ \hline
e & 0  \\ \hline
f & 0  \\ \hline
\end{tabular}}

& $T_{V(G_2)}^A(1)=-2$ \\

\\

\hline
\hline

\\
\\

\adjustbox{valign=t}{\begin{tikzpicture}
\node at (-1.5,-.5) (0) {$G_3=$};
\node at (0,0) (1) {b};
\node at (1.5,0) (2) {c};
\node at (0,-1) (3) {e};
\node at (1.5,-1) (4) {f};
\node at (2.25,0) (5) {d};
\node at (-.75,0) (6) {a};

\draw (1) -- (2);
\draw (1) -- (3);
\draw (2) -- (4);
\draw (3) -- (4);
\draw (2) -- (5);
\draw (1) -- (6);
\end{tikzpicture}} &

\adjustbox{valign=t}{\begin{tabular}{|c|c|} \hline
vertex & Number of Toggles \\ \hline
a & 0  \\ \hline
b & -1 \\ \hline
c &  -1 \\ \hline
d & 0  \\ \hline
e & 0  \\ \hline
f & 0 \\ \hline
\end{tabular}}

& $T_{V(G_3)}^A(1)=-2$\\
\\

\hline
\hline

\\
\\

\adjustbox{valign=t}{\begin{tikzpicture}
\node at (-.8,0) (0) {$G_4=$};
\node at (0,0) (1) {a};
\node at (.75,.75) (2) {b};
\node at (.75,-.75) (3) {e};
\node at (1.5,.75) (4) {c};
\node at (2.25,.75) (5) {d};
\node at (1.5,-.75) (6) {f};

\draw (1) -- (2);
\draw (1) -- (3);
\draw (2) -- (3);
\draw (2) -- (4);
\draw (4) -- (5);
\draw (3) -- (6);
\end{tikzpicture}} &

\adjustbox{valign=t}{\begin{tabular}{|c|c|} \hline
vertex & Number of Toggles \\ \hline
a & 1  \\ \hline
b & 0 \\ \hline
c & -1  \\ \hline
d & -1  \\ \hline
e & -1  \\ \hline
f & -2  \\ \hline
\end{tabular}}

& \adjustbox{valign=t}{$T_{V(G_4)}^A(1)=-4$}

\end{tabular}

\begin{tabular}{ l l l }

\adjustbox{valign=t}{\begin{tikzpicture}
\node at (-.8,0) (0) {$G_5=$};
\node at (0,0) (1) {a};
\node at (1,0) (2) {b};
\node at (2,1) (3) {c};
\node at (2,-1) (4) {e};
\node at (3,1) (5) {d};
\node at (3,-1) (6) {f};
\node at (4,-1) (7) {g};
\node at (5,-1) (8) {h};

\draw (1) -- (2);
\draw (2) -- (3);
\draw (2) -- (4);
\draw (3) -- (5);
\draw (4) -- (6);
\draw (6) -- (7);
\draw (7) -- (8);
\end{tikzpicture} }&

\adjustbox{valign=t}{\begin{tabular}{|c|c|} \hline
vertex & Number of Toggles \\ \hline
a & 0  \\ \hline
b & -1 \\ \hline
c & -1  \\ \hline
d & 0  \\ \hline
e & 0 \\ \hline
f & 0 \\ \hline
g & -1 \\ \hline
h & -1 \\ \hline
\end{tabular}}

& $T_{V(G_5)}^A(1)=-4$

\\
\\

\hline
\hline

\\
\\

\adjustbox{valign=t}{\begin{tikzpicture}
\node at (-.8,0) (0) {$G_6=$};
\node (0,0) (1) {a};
\node at (1,0) (2) {b};
\node at (2,1) (3) {c};
\node at (2,-1) (4) {e};
\node at (3,1) (5) {d};
\node at (3,-1) (6) {f};

\draw (1) -- (2);
\draw (2) -- (3);
\draw (2) -- (4);
\draw (3) -- (5);
\draw (4) -- (6);
\draw (6) -- (5);
\end{tikzpicture}} &

\adjustbox{valign=t}{\begin{tabular}{|c|c|} \hline
vertex & Number of Toggles \\ \hline
a & 1  \\ \hline
b & -1 \\ \hline
c & -1  \\ \hline
d & 0  \\ \hline
e & -1 \\ \hline
f & 0 \\ \hline
\end{tabular} }

& $T_{V(G_6)}^A(1)=-2$  \\

\\

\hline
\hline

\\
\\

\adjustbox{valign=t}{\begin{tikzpicture}
\node at (-2.8,0) (0) {$G_7=$};
\node at (0,0) (1) {c};
\node at (1,0) (2) {d};
\node at (2,1) (3) {e};
\node at (2,-1) (4) {f};
\node at (-1,0) (5) {b};
\node at (-2,0) (6) {a};

\draw (1) -- (2);
\draw (2) -- (3);
\draw (2) -- (4);
\draw (1) -- (5);
\draw (3) -- (4);
\draw (6) -- (5);
\end{tikzpicture}} &

\adjustbox{valign=t}{\begin{tabular}{|c|c|} \hline
vertex & Number of Toggles \\ \hline
a & -2  \\ \hline
b & -1 \\ \hline
c & 1  \\ \hline
d &  0 \\ \hline
e & -1 \\ \hline
f & -1 \\ \hline
\end{tabular} }

& $T_{V(G_7)}^A(1)=-4$ \\
\\
\hline
\hline
\\
\\

\adjustbox{valign=t}{\begin{tikzpicture}
\node at (-2.8,0) (0) {$G_8=$};
\node at (0,0) (1) {c};
\node at (1,0) (2) {d};
\node at (2,1) (3) {e};
\node at (2,-1) (4) {g};
\node at (3,1) (5) {f};
\node at (3,-1) (6) {h};
\node at (-1,0) (7) {b};
\node at (-2,0) (8) {a};

\draw (1) -- (2);
\draw (2) -- (3);
\draw (2) -- (4);
\draw (3) -- (5);
\draw (4) -- (6);
\draw (1) -- (7);
\draw (7) -- (8);
\end{tikzpicture}} &

\adjustbox{valign=t}{\begin{tabular}{|c|c|} \hline
vertex & Number of Toggles \\ \hline
a &  -2 \\ \hline
b & -1 \\ \hline
c & 1  \\ \hline
d & 0  \\ \hline
e &-1  \\ \hline
f & -1 \\ \hline
g &-1  \\ \hline
h & -1 \\ \hline
\end{tabular}}

& $T_{V(G_8)}^A(1)=-6$

\end{tabular}

\end{document}